\theoremstyle{plain}
\newtheorem{theorem}{Theorem}[section]
\crefname{theorem}{Theorem}{Theorems}
\theoremstyle{plain}
\newtheorem{lemma}[theorem]{Lemma}
\crefname{lemma}{Lemma}{Lemmas}
\theoremstyle{plain}
\newtheorem{corollary}[theorem]{Corollary}
\theoremstyle{plain}
\newtheorem*{claim*}{Claim}
\theoremstyle{definition}
\newtheorem{definition}[theorem]{Definition}
\theoremstyle{plain}
\newtheorem{conjecture}[theorem]{Conjecture}
\theoremstyle{plain}
\theoremstyle{definition}
\theoremstyle{definition}
\theoremstyle{plain}
\tikzstyle{vertex}=[circle,draw=black,fill=black,inner sep=0,minimum size=0.2cm,text=white,font=\footnotesize]
\date{}
\let\originalleft\left
\let\originalright\right
\renewcommand{\left}{\mathopen{}\mathclose\bgroup\originalleft}
\renewcommand{\right}{\aftergroup\egroup\originalright}
\renewcommand*{\UrlTildeSpecial}{%
  \do\~{%
    \mbox{%
      \fontfamily{ptm}\selectfont
      \textasciitilde
    }%
  }%
}%
\let\Url@force@Tilde\UrlTildeSpecial
\begin{document}

\title{Hypergraph cuts above the average}

\author{David Conlon\thanks{Mathematical Institute, Oxford OX2 6GG, United Kingdom. Email: \href{david.conlon@maths.ox.ac.uk} {\nolinkurl{david.conlon@maths.ox.ac.uk}}. Research supported by a Royal Society University Research Fellowship and by ERC Starting Grant 676632.}
\and 
Jacob Fox\thanks{Department of Mathematics, Stanford University, Stanford, CA 94305. Email: \href{fox@math.mit.edu} {\nolinkurl{jacobfox@stanford.edu}}. Research supported by a Packard Fellowship and by NSF Career Award DMS-1352121.}
\and 
Matthew Kwan\thanks{Department of Mathematics, Stanford University, Stanford, CA 94305. Email: \href{mailto:mattkwan@stanford.edu} {\nolinkurl{mattkwan@stanford.edu}}. This research was done while the author was working at ETH Zurich.}
\and 
Benny Sudakov\thanks{Department of Mathematics, ETH, 8092 Z\"urich, Switzerland. Email: \href{mailto:benjamin.sudakov@math.ethz.ch} {\nolinkurl{benjamin.sudakov@math.ethz.ch}}.
Research supported in part by SNSF grant 200021-175573.}}

\maketitle
\global\long\def\QQ{\mathbb{Q}}
\global\long\def\RR{\mathbb{R}}
\global\long\def\E{\mathbb{E}}
\global\long\def\Var{\operatorname{Var}}
\global\long\def\Cov{\operatorname{Cov}}
\global\long\def\spec{\operatorname{spec}}
\global\long\def\CC{\mathbb{C}}
\global\long\def\NN{\mathbb{N}}
\global\long\def\ZZ{\mathbb{Z}}
\global\long\def\GG{\mathbb{G}}
\global\long\def\supp{\operatorname{supp}}
\global\long\def\one{\mathbbm{1}}
\global\long\def\range#1{\left[#1\right]}
\global\long\def\d{\operatorname{d}\!}
\global\long\def\falling#1#2{\left(#1\right)_{#2}}
\global\long\def\im{\operatorname{im}}
\global\long\def\rank{\operatorname{rank}}
\global\long\def\sign{\operatorname{sign}}
\global\long\def\mod{\operatorname{mod}}
\global\long\def\id{\operatorname{id}}
\global\long\def\tr{\operatorname{tr}}
\global\long\def\adj{\operatorname{adj}}
\global\long\def\Unif{\operatorname{Unif}}
\global\long\def\Po{\operatorname{Po}}
\global\long\def\Bin{\operatorname{Bin}}
\global\long\def\Ber{\operatorname{Ber}}
\global\long\def\Geom{\operatorname{Geom}}
\global\long\def\Exp{\operatorname{Exp}}
\global\long\def\Hom{\operatorname{Hom}}
\global\long\def\vol{\operatorname{vol}}
\global\long\def\floor#1{\left\lfloor #1\right\rfloor }
\global\long\def\ceil#1{\left\lceil #1\right\rceil }
\global\long\def\cond{\,\middle|\,}
\global\long\def\cI{\mathcal{I}}
\global\long\def\cJ{\mathcal{J}}
\global\long\def\cA{\mathcal{A}}
\global\long\def\cB{\mathcal{B}}
\global\long\def\cF{\mathcal{F}}
\global\long\def\cG{\mathcal{G}}
\global\long\def\cV{\mathcal{V}}
\global\long\def\Ggood#1{G_{#1}^{\mathrm{good}}}
\global\long\def\Gbad#1{G_{#1}^{\mathrm{bad}}}
\global\long\def\Rgood#1{R_{#1}^{\mathrm{good}}}
\global\long\def\Rbad#1{R_{#1}^{\mathrm{bad}}}
\global\long\def\mono{\mathrm{mono}}
\global\long\def\multi{\mathrm{multi}}
\global\long\def\Hpart#1{H_{#1}^{\mathrm{part}}}
\global\long\def\Zpart{Z^{\mathrm{part}}}
\global\long\def\zpart{z^{\mathrm{part}}}
\global\long\def\Gpart#1{G_{#1}^{\mathrm{part}}}
\global\long\def\epart#1{e_{#1}^{\mathrm{part}}}

\begin{abstract}
An $r$-cut of a $k$-uniform hypergraph $H$ is a partition
of the vertex set of $H$ into $r$ parts and the size of the cut
is the number of edges which have a vertex in each part.
A classical result of Edwards says that every $m$-edge graph  
has a 2-cut of size $m/2 + \Omega(\sqrt{m})$ and this is best possible.
That is, there exist cuts which exceed the expected size 
of a random cut by some multiple of the standard deviation.
We study analogues of this and related results in hypergraphs.  
First, we observe that similarly to graphs, every $m$-edge $k$-uniform hypergraph has an $r$-cut 
whose size is $\Omega(\sqrt m)$ larger than the expected size of a random $r$-cut. 
Moreover, in the case where $k=3$ and $r=2$ this bound is best possible and is 
attained by Steiner triple systems.
Surprisingly, for all other cases (that is, if $k \geq 4$ or $r \geq 3$),
we show that every $m$-edge $k$-uniform hypergraph has an $r$-cut whose size is $\Omega(m^{5/9})$ larger than the expected size of a random $r$-cut. 
This is a significant difference in
behaviour, since the amount by which the size of the largest cut exceeds the
expected size of a random cut is now considerably larger than the standard deviation.
\end{abstract}

\section{Introduction}

The \emph{max-cut} of a graph $G$ is the maximum number of edges
in a bipartite subgraph of $G$. Equivalently, it is the maximum
\emph{size} of a \emph{cut}, where a cut of a graph is a partition
of its vertex set into two parts and the size of such a cut is the
number of edges with one vertex in each part.
The max-cut parameter has been studied extensively over the last 50 years, 
both from the algorithmic perspective
emphasised in computer science and optimisation and from the extremal perspective
taken in combinatorics. For a thorough (though now somewhat outdated)
overview of the subject, see the survey of Poljak and Tuza \cite{PTsurvey}.

In computer science, the problem of computing the max-cut of a graph 
already featured in Karp's famous list of 21 NP-complete problems~\cite{Karp}. 
However, efficient algorithms were subsequently found for computing the max-cut of certain 
restricted classes of graphs, including planar graphs~\cite{Hadlock, OD}. 
More recently, the problem has attracted attention due to the 
approximation algorithm of Goemans and Williamson~\cite{GW} and its 
connections to the unique games conjecture~\cite{KKMO}.

In combinatorics, it is an important problem to estimate the max-cut of a graph
in terms of its number of edges. As a first approximation, it is easy to see that the max-cut
of any $m$-edge graph is at least $m/2$. Indeed, this is the expected
size of a uniformly random cut. Moreover, though there are certain classes of graphs
for which the max-cut is significantly larger (see, for example, \cite{KKV}),
the max-cut of an $m$-edge graph is typically quite close to $m/2$. As such,
much of the focus has been on maximising the \emph{excess} of a cut, defined to 
be its size minus $m/2$.

There are several interesting results about the excess of a graph with $m$ edges.
Answering a conjecture of Erd\H os, Edwards \cite{Edw73} proved
that every $m$-edge graph has a cut with excess at least
$\left(\sqrt{8m+1}-1\right)/8$. This bound is exact for complete
graphs and so best possible when $m$ is of the form $\binom{n}{2}$. Nevertheless, 
answering another conjecture of Erd\H os, Alon \cite{Alo96}
showed that for infinitely many $m$ this bound can
be improved by $\Omega\left(m^{1/4}\right)$. We refer the reader to~\cite{AH98,BS02}
for further results in this direction and to~\cite{Alo96,ABKS03,AKS05,Erd79,PT94,She92} 
for improved bounds when the graph is known to be $H$-free.

For graphs without isolated vertices, it is also interesting to ask
for bounds on the excess in terms of the number of vertices. Edwards
\cite{Edw73} proved that every connected $n$-vertex graph has a
cut with excess at least $\left(n-1\right)/4$. Erd\H os, Gy\'arf\'as
and Kohayakawa \cite{EGK97} gave a simplified proof of this fact
(and of the Edwards bound) and
they also proved that every graph with no isolated vertices has a
cut with excess at least $n/6$. Both of these bounds are best possible, attained
by a clique and a disjoint union of triangles, respectively.

The goal of this paper is to extend these classical results, where possible,
from graphs to hypergraphs. We now describe our results in detail.

\subsection{Max-cut for hypergraphs}

A \emph{hypergraph} $H = (V, E)$ consists of a vertex set $V$ and a collection $E$ of subsets
of $V$ known as the edges of $H$. A $k$-\emph{uniform hypergraph}
(or \emph{$k$-graph }for short) is a hypergraph where all of the edges have size
exactly $k$. In particular, a 2-graph is simply a graph.

There are several possible notions of max-cut for hypergraphs that
generalise the notion of max-cut for graphs. One obvious generalisation
is to define a cut of a hypergraph $H$ to be a partition of its vertex
set into two parts and the size of such a cut to be the number
of edges which have nonempty intersection with each of the two parts.
As in graphs, we then define the max-cut of $H$ to be the maximum
size of a cut. An equivalent definition of
the same notion is that the max-cut of $H$ is the size of its largest
2-colourable subgraph (see, for example, \cite{Lov}). In computer science,
the problem of computing this type of max-cut is commonly called max
set splitting and the restriction to $k$-uniform hypergraphs
is called max E$k$-set splitting (see, for example, \cite{Gur, Has}). 

A second very natural generalisation to $k$-graphs $H$ is to
define a cut to be a partition of the vertices into $k$ parts and
the size of such a cut to be the number of edges with exactly
one vertex in each part. The corresponding notion of max-cut can also be
defined as the size of the largest $k$-partite subgraph
of $H$ (note that $k$-partite $k$-graphs have special significance
in combinatorics, for example, in Ryser's conjecture \cite{Aha} and 
in hypergraph Tur\'an theory \cite[Section~14]{FraChapter}). 

One can also interpolate between these two notions
of max-cut. For a $k$-graph $H$ and $2\le r\le k$, we define an
\emph{$r$-cut} of $H$ to be a partition of the vertex set of $H$ into $r$ parts
and the size of the $r$-cut to be the number of edges which have a
vertex from every part (we say such edges are \emph{multicoloured}).
Then the \emph{max-$r$-cut }of $H$ is the maximum size of an
$r$-cut. This generalised notion of hypergraph max-cut was first
considered by Erd\H os and Kleitman \cite{EK68}, who observed that
the expected size of a uniformly random $r$-cut is 
\[
\frac{S\left(k,r\right)r!}{r^{k}}m,
\]
where $S\left(k,r\right)$ is a Stirling number of the second
kind (the number of unlabelled partitions of $\left\{ 1,\dots,k\right\} $
into $r$ nonempty sets). For the convenience of the reader, we remark that $S\left(k,2\right)2!/2^{k}=1-2^{1-k}$ and $S\left(k,k\right)k!/k^{k}=k!/k^k$ are the constants corresponding to the aforementioned special cases of 2-cuts in $k$-graphs and $k$-cuts in $k$-graphs. Just as for graphs, we define the \emph{excess} of an $r$-cut
in a $k$-graph to be its size minus $\left(S\left(k,r\right)r!/r^{k}\right)m$.
We will chiefly be interested in proving Edwards-type bounds for $r$-cuts of a $k$-graph.

For fixed $2\le r\le k$, there are at least two natural conjectures
regarding the maximum excess of an $r$-cut of a $k$-graph. The first conjecture,
inspired by the graph case, is that if $m=\binom{n}{k}$, then complete
$k$-graphs have the smallest max-$r$-cut among all $m$-edge
$k$-graphs. This conjecture was proposed by Scott \cite[Problem~4.1]{Sco05}
in the case $r=2$. As a corollary of our \cref{thm:chromatic}, this would imply a 
lower bound of $\Omega\left(m^{\left(k-1\right)/k}\right)$ on the maximum excess
of an $m$-edge $k$-graph.

The second plausible conjecture is that, for any fixed $2\le r\le k$,
the smallest maximum $r$-cut over all $m$-edge $k$-graphs
has excess $\Theta\left(\sqrt{m}\right)$. There are several reasons
why this is a natural guess. First, one of the most basic and effective
means for proving bounds on max-cut in graphs is via the chromatic number,
and, as we will see in \cref{sec:chromatic}, these methods generalise
to prove a lower bound of $\Omega\left(\sqrt{m}\right)$ on the maximum
excess of a hypergraph cut. Second, the standard deviation of the size of a uniformly random
cut can be as small as $\Theta\left(\sqrt{m}\right)$. In particular, this is the case when $H$ is a \emph{linear} $k$-graph, meaning no pair of vertices
is involved in more than one edge. Third, we have already seen
that $\Theta\left(\sqrt{m}\right)$ is the correct order of magnitude
if $r=k=2$ and, moreover, we will next see that it is also the correct
order of magnitude when $r=2$ and $k=3$ (this disproves Scott's conjecture in the case $k=3$). A ($k$-uniform) \emph{perfect matching} is a $k$-graph in which every vertex appears in exactly one edge. A \emph{Steiner triple system} is a 3-graph in which every pair of vertices appears in exactly one edge. It is a very classical theorem in design theory~\cite{Kir47} that an $n$-vertex Steiner
triple system exists whenever $n\equiv1$ or $n\equiv3$ mod
6. 

\begin{theorem}
\label{thm:3-2}Every $m$-edge 3-graph
has a 2-cut with excess at least $\left(\sqrt{24m+1}-1\right)/16$. Every $n$-vertex connected 3-graph $H$
has a 2-cut with excess at least
$\left(n-1\right)/8$. Both of these bounds are best possible, being attained by any Steiner
triple system.

If $H$ is a 3-graph with $n$ vertices, none of which are isolated, then it has
a 2-cut with excess at least $n/12$. This bound is also best possible, being attained by a perfect matching.
\end{theorem}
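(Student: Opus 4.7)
My plan is to reduce all three statements at once to the corresponding classical results for graph max-cut that are cited in the introduction. Given the $3$-graph $H$, I would form an auxiliary multigraph $G$ on the same vertex set by replacing every hyperedge $\{x,y,z\}\in E(H)$ with the triangle on the three pairs $\{x,y\},\{x,z\},\{y,z\}$. Thus $G$ has $n$ vertices and $3m$ edges (counted with multiplicity), no loops, and $G$ is connected (respectively has no isolated vertex) if and only if $H$ is.

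The only step that really requires any thought is the identity relating 2-cuts in $H$ to ordinary cuts in $G$. For any 2-colouring $\chi\colon V(H)\to\{0,1\}$, a monochromatic hyperedge of $H$ contributes three monochromatic pairs to $G$ and hence $0$ cut edges, whereas a multicoloured hyperedge of $H$, whose vertices split as $2{+}1$, contributes exactly two bichromatic pairs and hence $2$ cut edges to $G$. Summing over all hyperedges, the cut of $G$ under $\chi$ is exactly twice the number of multicoloured edges of $H$ under $\chi$, so that the maximum 2-cut of $H$ is exactly half of the maximum cut of $G$.

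From here the three statements fall out by plugging in. Edwards's bound \cite{Edw73} applied to the $(3m)$-edge multigraph $G$ gives a cut of size at least $3m/2+(\sqrt{24m+1}-1)/8$, yielding a 2-cut of $H$ of excess at least $(\sqrt{24m+1}-1)/16$; Edwards's connected version gives excess at least $(n-1)/8$ in $H$; and the Erd\H os--Gy\'arf\'as--Kohayakawa bound \cite{EGK97} for graphs without isolated vertices gives excess at least $n/12$ in $H$. Since $G$ may carry parallel edges, I would appeal to the (entirely routine) multigraph forms of these three results.

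For the tightness statements, if $H$ is a Steiner triple system then every pair of vertices lies in a unique hyperedge, so $G=K_n$, which is the known extremal example for both Edwards bounds; if $H$ is a perfect matching then $G$ is a vertex-disjoint union of triangles, the extremal example for the Erd\H os--Gy\'arf\'as--Kohayakawa bound. I do not anticipate any real obstacle: once the identity $\operatorname{max-2-cut}(H)=\tfrac12\operatorname{max-cut}(G)$ is observed, the proof is essentially a black-box application of the known graph results, and tracking through the extremal examples gives the sharpness.
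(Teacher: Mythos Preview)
Your proposal is correct and is essentially identical to the paper's own proof: the paper also replaces each hyperedge by a triangle to form the multigraph $G(H)$, observes that a 2-cut of size $z$ in $H$ corresponds to a cut of size $2z$ in $G(H)$, and then invokes the Edwards and Erd\H os--Gy\'arf\'as--Kohayakawa bounds (in their multigraph forms), noting that Steiner triple systems and perfect matchings yield $K_n$ and a disjoint union of triangles respectively. Your write-up simply spells out a bit more of the arithmetic than the paper does.
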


\begin{proof}
It is easy to check (see \cite{Has}) that a 2-cut with size $z$ in an $m$-edge 3-graph
$H$ corresponds precisely to a 2-cut with size $2z$ in the $3m$-edge multigraph $G\left(H\right)$
obtained by replacing each edge of $H$ with a triangle on the same vertex
set. Each of the standard bounds for graph max-cut are also valid
for multigraphs, so the desired lower bounds immediately follow. Now, observe that if $H$ is a perfect matching then $G(H)$ is a disjoint union of triangles and if $H$ is a Steiner triple system then $G(H)$ is a complete graph. These are precisely the graphs which are extremal for the standard graph max-cut theorems.
\end{proof}

In light of \cref{thm:3-2}, it is tempting to study highly
regular sparse hypergraphs (such as those coming from design theory)
as potential examples of $k$-graphs whose maximum excess of an $r$-cut is $\Theta\left(\sqrt{m}\right)$.
However, surprisingly, our main theorem shows that one
can always find $r$-cuts of significantly greater excess, except when $r = 2$ and $k = 2$ or $3$.

\begin{theorem}
\label{thm:nontrivial} For any fixed $2\le r\le k$ with $r\ge3$
or $k\ge4$, every $m$-edge $k$-graph has an $r$-cut
with excess $\Omega\left(m^{5/9}\right)$.
\end{theorem}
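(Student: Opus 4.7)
The plan is to improve on the $\Omega(\sqrt{m})$ bound, already given by the chromatic-number arguments described earlier, via a randomised boosting step that exploits the extra combinatorial flexibility available when $r\ge 3$ or $k\ge 4$. The crucial new feature is that in these cases each edge of $H$ admits several inequivalent ``colour patterns'' under which it is multicoloured, so the contribution of an edge to an $r$-cut depends on a finer local configuration than in the classical graph case, and in particular the variance of the indicator of a prescribed pattern (rather than the coarser variance of the indicator of being multicoloured at all) can be harnessed.

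The argument should split into a dichotomy based on the pairwise codegrees of $H$. Let $D$ count the ``cherries'' of $H$, that is, pairs of edges sharing at least two vertices; this is comparable to $\sum_{u,v}\binom{d(u,v)}{2}$. In the cherry-sparse regime, with $D\le m^{\alpha}$ for a threshold $\alpha$ to be tuned, $H$ is close to linear and the colour patterns of distinct edges are nearly independent under a uniformly random $r$-cut. A higher-moment calculation in the spirit of Alon's fourth-moment argument for graphs, using the extra per-edge variance supplied by the multiple multicoloured patterns available to each edge, then produces an excess of order $m^{1/2+\delta(\alpha)}$ for some $\delta(\alpha)>0$. In the cherry-rich regime, with $D\ge m^{\alpha}$, I would greedily extract an edge-disjoint family of $\Omega(D/\Delta)$ cherries, where $\Delta$ is a suitable maximum-degree parameter, and on each cherry design a local swap on its vertices that improves the cut by a constant in expectation over a background random $r$-partition. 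These swaps act on essentially disjoint vertex sets and are therefore almost stochastically independent, so a Chernoff or Azuma concentration inequality realises $\Omega(\sqrt{D})$ of them simultaneously, producing an excess of order $m^{\alpha/2}$. Choosing $\alpha$ so that the two bounds meet at $m^{5/9}$ yields the target exponent.

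The principal obstacle is the cherry-rich step. One needs to design a deterministic local swap that improves the cut uniformly across \emph{every} colour pattern an edge can receive; the boundary case $r=k$, in which every edge must use all $r$ colours, is the most constrained and is where the extra flexibility barely suffices. One must then transfer the per-cherry gains into a concentrated global improvement, which seems to require decomposing the vertex set into ``good'' and ``bad'' regions on which the background random cut can be coupled with the local swaps without destroying their independence, together with a martingale concentration argument to control the residual dependencies. Optimising this decomposition, balanced against the near-linear moment calculation of the other case, is where I expect the specific exponent $5/9$ to be pinned down; the fact that the improvement over $\sqrt{m}$ is small ($1/18$ in the exponent) suggests that the two regimes interact rather tightly.
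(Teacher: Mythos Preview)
Your high-level dichotomy (near-linear versus far from linear) matches the paper's, but both halves of your plan have substantial gaps, and in the near-linear half the missing idea is the heart of the proof.

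In the cherry-sparse regime you propose a fourth-moment calculation on a refined ``pattern'' statistic to get excess $m^{1/2+\delta(\alpha)}$. This does not work as stated: for a linear $k$-graph the size $Z$ of a uniformly random $r$-cut has $\Var Z=\Theta(m)$, and a Paley--Zygmund/fourth-moment bound only yields excess $\Omega(\sqrt{\Var Z})=\Omega(\sqrt m)$. Tracking a finer colour pattern does not raise the variance beyond $O(m)$, so there is no extra exponent to be harvested this way. The paper's mechanism is entirely different: one first \emph{partially exposes} a random cut (e.g.\ reveal which vertices lie in parts $3,\dots,r$), which converts the problem into finding a large $2$-cut of a \emph{random} auxiliary (multi)hypergraph $H^{\mathrm{part}}$. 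One then partitions the vertex set into many small pieces $V_1,\dots,V_t$ chosen so that, restricted to each $V_i$, the edges of $H^{\mathrm{part}}$ are essentially independent; a greedy argument on each $V_i$ (exploiting the random fluctuations, via a kurtosis/fourth-moment bound applied \emph{locally} to degree imbalances, not globally to $Z$) yields excess $\Omega(e_i/\sqrt{\Delta'})$, and these are combined by randomly permuting the two colours on each piece. The exponent $5/9$ arises from balancing the density $p$ of this partition (which controls $\Delta'$) against the codegree and degree thresholds, not from the mechanism you describe.

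In the cherry-rich regime your plan is closer in spirit but still off. You propose extracting $\Omega(D/\Delta)$ edge-disjoint cherries, designing per-cherry swaps, and using concentration to realise $\Omega(\sqrt D)$ of them. The paper instead finds a \emph{matching} of $q$ pairs $\{u,v\}$ with $\deg(u,v)>g$, and simply conditions a uniformly random $r$-cut on the event that each matched pair is bichromatic; each of the $\Omega(gq)$ edges through a matched pair is then multicoloured with probability boosted by $\Omega(1)$, giving excess $\Omega(gq)$ deterministically in expectation, with no concentration step needed. Your swap-and-concentrate scheme would have to cope with overlapping cherries and with the fact that a swap helping one edge can hurt another, and you have not said how.
\bigskip
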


We prove \cref{thm:nontrivial} in \cref{sec:nontrivial-reductions,subsec:3-cuts-almost-linear,subsec:2-cuts-almost-linear}. The idea
is to first randomly choose the parts for some of the vertices in our hypergraph $H$,
in such a way as to reduce the problem to finding a large 2-cut for a certain
random hypergraph derived from $H$. Although the edges of this random hypergraph
will not be independent, we will be able to reduce to the case where the
dependencies are in a certain sense sparse, meaning that a typical
small subset of vertices is not likely to witness any dependencies.
The idea then is to split the vertex set into many such subsets, to greedily find a good cut for each of these vertex subsets individually,
and then to combine these cuts randomly to give a cut of $H$ with large excess. We remark that the general idea of partitioning a random graph into subsets with few dependencies also appeared in~\cite{APS}.

Regarding upper bounds, we show that random $k$-graphs can
have smaller max-$r$-cut than complete graphs with the same number
of edges. This disproves Scott's conjecture for $k>3$. 

\begin{theorem}
\label{thm:upper}For $k>3$, in the binomial random $k$-graph $\GG_{k}\left(n,n^{3-k}\right)$,
a.a.s.\footnote{By \emph{asymptotically almost surely} or \emph{a.a.s.}, we mean that
the probability of the event is $1-o\left(1\right)$. Asymptotics will
always be as $n\to\infty$ or as $m\to\infty$ --- it should be clear
from context which is meant. The constants implied by all asymptotic notation may depend on fixed values of $k$ and $r$.}\ every $r$-cut has excess at most $O\left(n^{2}\right)=O\left(m^{2/3}\right)$.
\end{theorem}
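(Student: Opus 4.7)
The plan is to fix $\alpha = S(k,r)\,r!/r^k$ and show that with probability $1 - o(1)$, every $r$-partition $\pi$ of $V = [n]$ satisfies $M(\pi) - \alpha m = O(n^2)$, where $M(\pi)$ denotes the size of the $r$-cut induced by $\pi$ and $m$ is the total number of edges. Writing $p = n^{3-k}$, each $k$-subset of $V$ is an edge independently with probability $p$, and $\E m = \binom{n}{k} p = \Theta(n^3)$, so $m = \Theta(n^3)$ and $n^2 = \Theta(m^{2/3})$ a.a.s.\ by a standard Chernoff bound. Fixing $\pi$ with part sizes $n_1, \dots, n_r$, let $N(\pi)$ be the number of $k$-subsets of $V$ meeting every part. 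Since $M(\pi) = \sum_e X_e \one[e\text{ is multicoloured}]$ and $m = \sum_e X_e$ with the $X_e \sim \Ber(p)$ independent, one can rewrite
\[ M(\pi) - \alpha m \;=\; \sum_{e \in \binom{V}{k}} c_e X_e, \qquad c_e \;=\; \one[e\text{ multicoloured}] - \alpha \;\in\; [-1, 1]. \]

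Next I would estimate the mean. Expanding $\binom{n_i}{a_i} = n_i^{a_i}/a_i! + O(n^{a_i-1})$ and summing over compositions $(a_1, \dots, a_r)$ of $k$ with $a_i \ge 1$ yields $N(\pi) = (n^k/k!)\,q(\pi) + O(n^{k-1})$ and $\alpha\binom{n}{k} = (n^k/k!)\,q^* + O(n^{k-1})$, where $q(\pi)$ (respectively $q^*$) is the probability that $k$ i.i.d.\ samples from $[r]$ with probabilities $n_1/n, \dots, n_r/n$ (respectively uniform) hit every element of $[r]$. A standard Schur-concavity argument gives $q(\pi) \le q^*$, and hence $\E[M(\pi) - \alpha m] = p\bigl(N(\pi) - \alpha\binom{n}{k}\bigr) = O(p\,n^{k-1}) = O(n^2)$. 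For the concentration, $\sum_e c_e^2 p(1-p) \le \binom{n}{k}p = O(n^3)$ and $|c_e X_e| \le 1$, so Bernstein's inequality gives
\[ \Pr\!\bigl(M(\pi) - \alpha m \ge \E[M(\pi) - \alpha m] + Cn^2\bigr) \;\le\; \exp\bigl(-\Omega(C^2 n)\bigr) \]
for every constant $C > 0$.

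A union bound over the at most $r^n$ partitions of $[n]$ into $r$ parts then shows that the total failure probability is $r^n \exp(-\Omega(C^2 n)) = o(1)$ once $C$ is taken sufficiently large in terms of $r$, so a.a.s.\ every $r$-cut has excess at most $(C + O(1))n^2 = O(n^2) = O(m^{2/3})$. The only step that is not a direct application of Bernstein or Chernoff is the inequality $q(\pi) \le q^*$ used to bound the mean, which amounts to Schur-concavity of the surjection probability on the probability simplex; this can be verified either by an inclusion-exclusion computation using convexity of $p \mapsto (1-p)^k$, or by a Hessian calculation at the symmetric point. Everything else is routine.
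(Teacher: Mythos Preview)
Your proof is correct and follows essentially the same line as the paper's: bound the deterministic excess $N(\pi)-\alpha\binom{n}{k}$ by $O(n^{k-1})$ using the fact that balanced partitions are optimal (the paper packages this as Lemma~\ref{thm:Kn}, citing \cite{EK68}, which is exactly your Schur-concavity step), apply a Chernoff/Bernstein bound to get failure probability $e^{-\Omega(n)}$ per partition, and union-bound over the $r^n$ partitions. The only cosmetic difference is that the paper bounds $M(\pi)$ and $m$ separately, whereas you bound the centered quantity $M(\pi)-\alpha m$ in one shot; both are equally routine.
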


As in graphs, it is also interesting to consider bounds for the
excess of a $k$-graph $r$-cut in terms of the number of vertices.
Using methods developed by Crowston, Fellows, Gutin, Jones,
Rosamond, Thomass\'e and Yeo \cite{CFGJRTY11}, Giannopoulou, Kolay
and Saurabh \cite{GKS12} proved that $n$-vertex $k$-graphs which
are partition connected (a stronger requirement than being
connected) have 2-cuts with excess at least $\left(n-1\right)/\left(k2^{k-1}\right)$.
We give improved bounds in a much more general setting.

\begin{theorem}
\label{thm:excess-n}For any fixed $2\le r\le k$, consider an $n$-vertex
$k$-graph $H$ with no isolated vertices. Then $H$ has an $r$-cut
with excess $\Omega\left(n\right)$. In particular, if $k>2$, then
$H$ has a 2-cut with excess at least $n/\left(k2^{k-1}\right)$.
This is best possible, being tight for a perfect matching.
\end{theorem}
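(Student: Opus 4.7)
The plan is to reduce first to the case where $H$ is connected---since both the hypothesis and the bound $n/(k\cdot 2^{k-1})$ are additive over connected components---and then to construct a 2-cut with sufficient excess through a randomised argument based on a carefully chosen partition of $V(H)$.

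Given a partition $\mathcal{P}$ of $V(H)$ into \emph{cover parts} $V_1,\ldots,V_\ell$, each satisfying $V_i\subseteq e_i$ for some edge $e_i\in E(H)$ and $|V_i|\ge 2$, together with singletons $\{u_1\},\ldots,\{u_s\}$, consider the random 2-cut in which each cover part independently receives a uniformly random non-monochromatic 2-colouring and each singleton is coloured uniformly at random in $\{A,B\}$. Under this distribution each cover edge $e_i$ is automatically multicoloured, contributing $2^{1-k}$ to the expected excess, and a direct calculation using
\[
\Pr[V_i\cap e\subseteq A\mid V_i\text{ non-mono}]=\frac{2^{-|V_i\cap e|}-2^{-|V_i|}}{1-2^{1-|V_i|}}
\]
shows that every non-cover edge $e$ satisfies $\Pr[e\text{ monochromatic}]\le 2^{1-k}$ and hence contributes non-negatively. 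Writing $G'\ge 0$ for the total non-cover contribution, the expected excess is therefore $\ell\cdot 2^{1-k}+G'$.

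It thus suffices to choose $\mathcal{P}$ so that $k\ell+k\cdot 2^{k-1}G'\ge n$, which using $n=s+\sum_i|V_i|$ becomes
\[
\sum_{i=1}^{\ell}(k-|V_i|)+k\cdot 2^{k-1}\,G'\;\ge\;s.
\]
For a perfect matching one takes $V_i=e_i$, giving $s=0$, $\ell=n/k$, and equality throughout, which verifies extremality. For general $H$, $\mathcal{P}$ should be chosen with a maximality property ensuring that every singleton $u$ lies in some non-cover edge $e^u$ whose remaining $k-1$ vertices all belong to cover parts. A case analysis on this edge then gives the inequality: if those $k-1$ vertices all lie in a single cover part, a direct computation shows that the contribution of $e^u$ to $G'$ is at least $2^{-k}(1-2^{2-k})/(1-2^{1-k})\ge 2^{1-k}/k$, exactly covering the singleton's demand; otherwise they split across several cover parts, and the combined slack $\sum_i(k-|V_i|)$ of those parts absorbs the demand instead.

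Arranging the partition so that this case analysis goes through uniformly is the main technical obstacle. In particular, the right choice of $\mathcal{P}$ is not always the one maximising $\ell$, since in certain hypergraphs a small high-codegree cover part $V_i$ of size less than $k$ can force many non-cover edges to be multicoloured simultaneously through a vanishing numerator in the conditional probability above. The analogous $\Omega(n)$ excess bound for general $r$-cuts with $2\le r\le k$ follows from the same template, using cover parts of size $\ge r$ coloured by a uniformly random $r$-multicoloured labelling and performing the analogous accounting, with constants depending only on $k$ and $r$.
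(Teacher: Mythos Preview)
Your approach is genuinely different from the paper's. The paper proves the $2$-cut bound via a modified method of conditional probabilities: it processes vertices in a (random) order, greedily choosing parts but temporarily leaving a vertex ``undetermined'' when neither choice changes the conditional expectation; a careful analysis shows each vertex that is among the first two of some edge eventually contributes $1/2^{k-1}$ to the gain. The general $r$-cut case is then obtained by a partial-exposure reduction to a mixed $2$-cut problem. Your biased-random-cut idea (forcing designated sub-edges to be non-monochromatic and showing all other edges contribute nonnegatively) is a nice alternative framework, and your verification that non-cover edges never hurt is correct.

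However, the proof is incomplete at exactly the point you flag as ``the main technical obstacle.'' You reduce to the inequality $\sum_i(k-|V_i|)+k\,2^{k-1}G'\ge s$, but you do not exhibit a partition for which your case analysis actually closes. Two concrete issues:
\begin{itemize}
\item In Case~2 you assert that the slack of the cover parts meeting $e^u$ ``absorbs the demand,'' but several singletons can all land in Case~2 through the \emph{same} small cover parts, so this slack may be charged many times over. Nothing in your sketch prevents this.
\item Moreover, in Case~2 the edge $e^u$ itself can contribute \emph{zero} to $G'$: if every intersected part has $|V_{i_j}|=2$ and $|V_{i_j}\cap e^u|=1$, then your conditional-probability formula gives $\Pr[e^u\text{ mono}]=2^{1-k}$ exactly. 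So you really do need the slack, and the double-counting problem above is live.
\end{itemize}
You hint that ``the right choice of $\mathcal{P}$ is not always the one maximising $\ell$,'' but never specify what the right choice is or prove it works; as written this is a plan rather than a proof. The extension to general $r$ is similarly only gestured at (``the analogous accounting''), whereas in the paper this step requires a separate reduction and is not a formality.
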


We prove \cref{thm:excess-n} in \cref{sec:excess-n} by analysing a
variant of the \emph{method of conditional probabilities} introduced
by Erd\H{o}s and Selfridge \cite{ES}. We note that the proof easily
gives rise to a deterministic polynomial-time algorithm for constructing
cuts of the required size.

\subsection{Structure of the paper}

The rest of the paper is organised as follows. In \cref{sec:upper}, we prove \cref{thm:upper} and in \cref{sec:chromatic}, we give some simple lower bounds on max-$r$-cut based on the (strong) chromatic number. The bulk of the paper is then spent proving \cref{thm:nontrivial,thm:excess-n}. We remark that these two proofs are independent of each other.

The proof of \cref{thm:excess-n} is presented first, in \cref{sec:excess-n}. It is convenient to first show the required bound for max-2-cut, then use a simple reduction based on conditional expectations to deduce the general result for max-$r$-cut. Next, we turn to the proof of \cref{thm:nontrivial}. Again, we make some reductions and then prove the theorem in a few special cases. In \cref{sec:nontrivial-reductions}, we give some simple arguments to treat the case where $H$ is ``far'' from being a linear hypergraph and we give a few different arguments which allow us to reduce our consideration to the case $k=r=3$ and the case $r=2$. Then, in \cref{subsec:3-cuts-almost-linear}, we treat the case of 3-cuts in almost-linear 3-graphs, while in \cref{subsec:2-cuts-almost-linear} we treat the case of 2-cuts in almost-linear $k$-graphs. Both of these proofs follow the same basic strategy, but the former case is much simpler and can be viewed as a ``warm-up''.


\subsection{Notation}

We use standard graph-theoretic notation throughout. The vertex set
and edge set of a hypergraph $H$ are denoted $V\left(H\right)$ and $E\left(H\right)$,
respectively, and we write $e\left(H\right)=\left|E\left(H\right)\right|$
for the number of edges in $H$. For a subset $U$ of the vertex set of $H$, we write $H\left[U\right]$ for
the subgraph of $H$ \emph{induced }by $U$, that is, the subgraph of $H$ whose vertex set is $U$ and
whose edge set consists of all edges in $E(H)$ which are entirely contained in $U$. 

Less standardly, a \emph{mixed} $k$-graph is a hypergraph where all
edges have size at most $k$ and a \emph{multihypergraph} is a hypergraph
where multiple copies of each edge are allowed (formally, a multihypergraph
$H$ is defined by a vertex set $V\left(H\right)$ and a multiset
$E\left(H\right)$ of subsets of $V\left(H\right)$). We also define
$k$-multigraphs and mixed $k$-multigraphs, in the
obvious way. In addition, for a (multi)hypergraph $H$, we
let $G\left(H\right)$ be the multigraph obtained by replacing each
edge $e\in E(H)$ with a clique $K_{\left|e\right|}$.

For a real number $x$, the floor and ceiling functions are denoted
$\floor x=\max\{i\in\mathbb{Z}:i\le x\}$ and $\ceil x=\min\{i\in\mathbb{Z}:i\ge x\}$. We will however omit floor and ceiling symbols whenever they are not crucial, for the sake of clarity of presentation.
For real numbers $x,y$, we write $x\lor y$ to denote $\max\{x,y\}$
and $x\land y$ to denote $\min\{x,y\}$. All logs are base
$e$.

Finally, we use standard asymptotic notation throughout, as follows.
For functions $f=f(n)$ and $g=g(n)$, we write $f=O(g)$ to mean that there
is a constant $C$ such that $|f|\le C|g|$, $f=\Omega(g)$
to mean that there is a constant $c>0$ such that $f\ge c|g|$,
$f=\Theta(g)$ to mean that $f=O(g)$ and $f=\Omega(g)$, and 
$f=o(g)$ to mean that $f/g\to0$ as $n \to \infty$.

\section{\label{sec:upper}Upper bounds}

Let $K_{n}^{\left(k\right)}$ be the complete $k$-graph on $n$ vertices.
In this section, we prove upper bounds on the max-$r$-cut in $K_{n}^{\left(k\right)}$
and in random $k$-graphs, in the process proving \cref{thm:upper}.

\begin{lemma} \label{thm:Kn}
Every $r$-cut in $K_{n}^{\left(k\right)}$ has excess at most
$O\left(n^{k-1}\right)$.
\end{lemma}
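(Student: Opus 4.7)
I plan to use a smoothing argument. Since $K_{n}^{(k)}$ is vertex-transitive, any $r$-cut of $K_{n}^{(k)}$ is determined up to automorphism by the sequence of part sizes $(n_{1},\dots,n_{r})$, so I will write $f(n_{1},\dots,n_{r})$ for the number of multicoloured edges. Setting $g(n) = \frac{S(k,r)\,r!}{r^{k}}\binom{n}{k}$ (the expected size of a uniformly random $r$-cut), the goal is to show $f(n_{1},\dots,n_{r}) \le g(n)+O(n^{k-1})$ for every valid tuple.

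The crux is the following smoothing lemma: if $n_{i}>n_{j}$, then moving a single vertex $v\in V_{i}$ to $V_{j}$ does not decrease $f$. Only edges through $v$ change status, so the net change $\Delta f$ equals $A-B$, where $A$ counts the $(k-1)$-subsets $S$ of $V\setminus V_{j}$ hitting each of $V_{i}\setminus\{v\}$ and $V_{\ell}$ for $\ell\ne i,j$ (these are the edges $\{v\}\cup S$ that become multicoloured), while $B$ counts the $(k-1)$-subsets $S$ of $V\setminus V_{i}$ hitting each of $V_{j}$ and $V_{\ell}$ for $\ell\ne i,j$ (those that become non-multicoloured). Both $A$ and $B$ are of the form ``number of multicoloured $(k-1)$-edges in an $(r-1)$-partition'' with respective part sizes $(n_{i}-1,(n_{\ell})_{\ell\ne i,j})$ and $(n_{j},(n_{\ell})_{\ell\ne i,j})$. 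The key point is that this count is monotone non-decreasing in each of its arguments, simply because enlarging a part can only create new candidate multicoloured edges and never destroys any. Since $n_{i}-1\ge n_{j}$, we deduce $A\ge B$, so $\Delta f\ge 0$.

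Iterating the smoothing between any two parts of unequal size, we reach a ``balanced'' partition with all parts of size $\lceil n/r\rceil$ or $\lfloor n/r\rfloor$, while $f$ only weakly increases. Hence the maximum of $f$ is attained at this balanced partition, and it suffices to evaluate $f$ there. By inclusion-exclusion, $f(n_{1},\dots,n_{r})=\sum_{T\subseteq [r]}(-1)^{|T|}\binom{n-\sum_{i\in T}n_{i}}{k}$, and at the balanced partition $\sum_{i\in T}n_{i}=|T|\,n/r+O(1)$. Substituting $\binom{m}{k}=m^{k}/k!+O(m^{k-1})$ and applying the standard identity $\sum_{t=0}^{r}(-1)^{t}\binom{r}{t}(r-t)^{k}=S(k,r)\cdot r!$ gives $f_{\mathrm{bal}} = \frac{S(k,r)\,r!}{r^{k}}\binom{n}{k}+O(n^{k-1}) = g(n)+O(n^{k-1})$, as required.

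The main obstacle is the smoothing lemma, and specifically the clean identification of $\Delta f$ as a difference of two multicoloured-edge counts in $(r-1)$-cuts; once that bijective bookkeeping is set up, monotonicity is elementary and the final asymptotic evaluation is a routine computation with Stirling's identity.
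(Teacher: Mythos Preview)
Your proof is correct and follows the same overall two-step structure as the paper's proof: (i) reduce to the equitable partition, (ii) compute the excess there and see it is $O(n^{k-1})$. The execution of both steps differs, however. For step (i) the paper simply cites Erd\H{o}s--Kleitman \cite{EK68} for the fact that the equitable cut is optimal, whereas you prove this from scratch with a clean smoothing argument; your identification of $\Delta f = A-B$ as a difference of multicoloured $(k-1)$-edge counts in two $(r-1)$-partitions, together with the monotonicity observation, is a nice self-contained justification. For step (ii) the paper writes the cut size as $\sum_{(s_1,\dots,s_r)}\prod_i\binom{n_i}{s_i}$ and compares term by term with the multinomial expansion of the expected size, while you use the inclusion--exclusion formula $f=\sum_T(-1)^{|T|}\binom{n-\sum_{i\in T}n_i}{k}$ together with the Stirling identity $\sum_t(-1)^t\binom{r}{t}(r-t)^k=S(k,r)\,r!$; both are routine, and yours is arguably tidier. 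One minor notational slip: in your description of $A$ you write $S\subseteq V\setminus V_j$, but since $S$ is the complement of $v$ in an edge through $v$, you automatically have $v\notin S$, so the parts indeed have sizes $(n_i-1,(n_\ell)_{\ell\ne i,j})$ as you state.
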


\begin{proof}
Consider an $r$-cut with parts of size $n_{1},\dots,n_{r}$, where $\sum_{i}n_{i}=n$. The size of this $r$-cut is 
$$
\sum_{\left(s_{1},\dots,s_{r}\right)}\prod_{i=1}^{r}{n_{i} \choose s_{i}},
$$
where the sum runs over all $\left(s_{1},\dots,s_{r}\right)\in\left(\NN^{+}\right)^{r}$
with $\sum_{i}s_{i}=k$. Recall that the expected size of a random $r$-cut is 
$$\frac{S\left(k,r\right)r!}{r^{k}}\binom{n}{k}=\sum_{\left(s_{1},\dots,s_{r}\right)}\binom{k}{s_{1},\dots,s_{r}}\frac{1}{r^{k}}\binom{n}{k}.$$
Now, as in \cite{EK68}, one may verify that the max-$r$-cut is obtained when each $\floor{n/r}\le n_{i}\le\ceil{n/r}$ (we say such a cut is \emph{equitable}).
Therefore, the excess of the $r$-cut is 
\begin{align*}
\sum_{\left(s_{1},\dots,s_{r}\right)}\left(\prod_{i=1}^{r}{n_{i} \choose s_{i}}-\binom{k}{s_{1},\dots,s_{r}}\frac{1}{r^{k}}\binom{n}{k}\right)&=\sum_{\left(s_{1},\dots,s_{r}\right)}\left(\prod_{i=1}^{r}\frac{1}{s_{i}!}\right)\left(\prod_{i=1}^{r}n_{i}^{s_{i}}-\frac{n^{k}}{r^{k}}\right)+O\left(n^{k-1}\right)\\&=O\left(n^{k-1}\right),
\end{align*}
as desired.
\end{proof}

Now we can prove \cref{thm:upper}.
\begin{proof}[Proof of \cref{thm:upper}]
Let $p=n^{3-k}$ and $G\in\GG_{k}\left(n,p\right)$, so that
a.a.s.~$G$ has $p\binom{n}{k}+o\left(n^{2}\right)$ edges. Consider
any $r$-cut of $K_{n}^{\left(k\right)}$. By \cref{thm:Kn}, it has size
at most $\left(S\left(k,r\right)r!/r^{k}\right){n \choose k}+O\left(n^{k-1}\right)$,
so the size of the corresponding $r$-cut in $G$ is $p\left(S\left(k,r\right)r!/r^{k}\right){n \choose k}+O\left(pn^{k-1}+t\right)$
with probability $e^{-\Omega\left(t^{2}/pn^{k}\right)}$. If $t$
is a sufficiently large multiple of $n^{2}$ then this probability
is $o\left(r^{-n}\right)$, which allows us to take a union bound
over all $r$-cuts.
\end{proof}

\section{\label{sec:chromatic}Basic chromatic number lower bounds}

If an $r$-cut of a hypergraph $H$ is very ``imbalanced'' (for example,
if all but one part is empty), then its size is small. Therefore, a
natural way to obtain an $r$-cut above the average size is to choose a random
cut which is biased towards being ``balanced''. For example, we could choose
a uniformly random $r$-cut of $H$ into parts of equal sizes, but it turns
out that we can improve on this by choosing a random $r$-cut which is
balanced in a certain way with respect to a fixed strong colouring of $H$.

We recall that a \emph{strong colouring} of a (multi)hypergraph is a colouring in which
all the vertices in each edge receive a distinct colour. The
\emph{strong chromatic number} of a (multi)hypergraph is then the minimum number
of colours required for a strong colouring.

\begin{theorem}
\label{thm:chromatic}For any fixed $2\le r\le k$ and any $n$-vertex,
$m$-edge $k$-multigraph $H$ with strong chromatic number $\chi$,
there is an $r$-cut with excess $\Omega\left(m/\chi\right)$.
\end{theorem}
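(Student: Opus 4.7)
The plan is to fix a strong $\chi$-colouring $c : V(H) \to [\chi]$ and then randomly group the colour classes into $r$ parts in a balanced way. Concretely, I would pick a uniformly random \emph{equitable} $r$-partition $\pi : [\chi] \to [r]$ --- one in which every fibre $\pi^{-1}(i)$ has size $\floor{\chi/r}$ or $\ceil{\chi/r}$ --- and define the random $r$-cut $\omega := \pi \circ c$ of $H$.

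For any edge $e \in E(H)$, the $k$ vertices of $e$ receive $k$ distinct colours under $c$. Since $\pi$ is uniform over equitable partitions, the probability that $e$ is multicoloured under $\omega$ depends only on $k$, $r$ and $\chi$: averaging over all $k$-subsets of $[\chi]$ via the symmetry of the equitable distribution, it equals the fraction of multicoloured edges of $K_\chi^{(k)}$ under any fixed equitable $r$-partition. By \cref{thm:Kn}, every equitable $r$-partition of $[\chi]$ achieves the max-$r$-cut of $K_\chi^{(k)}$, so this fraction equals $M/\binom{\chi}{k}$, where $M$ is the size of the max-$r$-cut of $K_\chi^{(k)}$. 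By linearity of expectation, $\E[\text{size of }\omega] = m\cdot M/\binom{\chi}{k}$, so some $r$-cut of $H$ attains size at least this large.

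It thus suffices to show $M - (S(k,r)r!/r^k)\binom{\chi}{k} = \Omega(\chi^{k-1})$, since then each edge of $H$ is multicoloured under $\omega$ with probability at least $S(k,r)r!/r^k + \Omega(1/\chi)$, yielding excess $\Omega(m/\chi)$. The matching upper bound $O(\chi^{k-1})$ is already supplied by the proof of \cref{thm:Kn}; for the lower bound, I would Taylor-expand the polynomial
\[
f(n_1,\ldots,n_r) := \sum_{(s_1,\ldots,s_r)\in(\NN^+)^r,\, \sum s_i = k}\ \prod_{i=1}^{r}\binom{n_i}{s_i}
\]
around the symmetric point $n_i = \chi/r$. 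As in the proof of \cref{thm:Kn}, the linear contributions in the deviations $(n_i - \chi/r)$ cancel by symmetry, so the leading correction comes from the Hessian of $f$ together with the falling-factorial corrections. The main obstacle is to verify that this second-order term is \emph{strictly} of order $\chi^{k-1}$, and not smaller. Positive semidefiniteness of the Hessian of $-f$ on the tangent space of the simplex $\{\sum_i n_i = \chi\}$ is automatic from the extremality of the equitable point (via \cref{thm:Kn}); ruling out degeneracy, which yields the required $\Omega(\chi^{k-1})$ lower bound, can be carried out by a direct closed-form computation of the Hessian for each $2\le r\le k$ or via a Schur-concavity argument for $f$.
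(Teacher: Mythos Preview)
Your setup matches the paper's exactly: fix a strong $\chi$-colouring and take a uniformly random equitable $r$-partition of the colour classes. Where you diverge is in how you establish the per-edge probability bound $\Pr(e\text{ multicoloured}) \ge S(k,r)r!/r^k + \Omega(1/\chi)$. You reduce this to showing that the max-$r$-cut $M$ of $K_\chi^{(k)}$ satisfies $M - (S(k,r)r!/r^k)\binom{\chi}{k} = \Omega(\chi^{k-1})$, and then propose to prove that lower bound via the Hessian of $f$ at the symmetric point (or Schur-concavity).

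This last step is where the gap lies. Hessian non-degeneracy and Schur-concavity are arguments for why the equitable point \emph{maximises} $f$ over the simplex; they say nothing about how much $f$ at that point exceeds the \emph{average} $(S(k,r)r!/r^k)\binom{\chi}{k}$. Concretely, since the deviations $n_i-\chi/r$ are $O(1)$ for an equitable partition, the Hessian term in your Taylor expansion contributes only $O(\chi^{k-2})$, not $\Theta(\chi^{k-1})$. The entire $\Theta(\chi^{k-1})$ gap comes from the falling-factorial corrections you mention in passing --- that is, from comparing $\prod_i\binom{\chi/r}{s_i}$ with $\binom{k}{s_1,\dots,s_r}r^{-k}\binom{\chi}{k}$ at the level of the $\chi^{k-1}$ coefficient. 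Working this out, what you actually need is the inequality
\[
\sum_{(s_1,\dots,s_r)}\frac{1}{\prod_i s_i!}\left(\binom{k}{2}-r\sum_i\binom{s_i}{2}\right)>0,
\]
which is true but requires its own argument and is not addressed by either of your proposed methods.

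The paper sidesteps this computation entirely with a short coupling argument: interpolate between the uniform and the balanced cut one vertex of $e$ at a time, and at each step condition on the parts of all other vertices. With probability $\Omega(1)$ exactly one part $j$ is still missing; the uniform cut hits $j$ with probability $1/r$, while the balanced cut hits it with probability at least $(\chi/r)/(\chi-i+1)=1/r+\Omega(1/\chi)$. Summing over the $k-1$ nontrivial steps gives the claim directly.
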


\begin{proof}
Consider the cut obtained by splitting the colour classes in a strong $\chi$-colouring of $H$ randomly into $r$
groups, each with equally many classes. (We can assume that $\chi$ is divisible by $r$, by adding at most $r-1$ additional isolated vertices each in their own colour class). We claim that the probability
an edge $e$ is multicoloured is 
\[
\frac{S\left(k,r\right)r!}{r^{k}}+\Omega\left(1/\chi\right),
\]
which clearly suffices to prove the theorem. To prove our claim, consider
some ordering on the $k$ vertices of $e$ (we assume the vertices
are in fact the numbers $\left\{ 1,\dots,k\right\} $) and, for each
$0\le i\le k$, let $\omega_{i}:e\to\left\{ 1,\dots,r\right\} $ be
the random (``partial'') cut where the parts of the first $i$ vertices
of $e$ are coloured according to our special ``balanced'' random
cut and the last $k-i$ vertices are coloured uniformly at random.
We will prove that for each $1\le i\le k$ the probability $e$
is multicoloured with respect to $\omega_{i}$ is no less than the
the corresponding probability for $\omega_{i-1}$, and that for $2\le i\le k$
these probabilities differ by $\Omega\left(1/\chi\right)$.

First note that $\omega_{0}$ and $\omega_{1}$ actually have the
same distribution. Next, note that $\omega_{i}$ and $\omega_{i-1}$
can be coupled to differ only on vertex $i$. Expose the value of
each $\omega_{i}\left(v\right)=\omega_{i-1}\left(v\right)$, for $v \neq i$.
With probability $\Omega\left(1\right)$, in $e$ we have exposed
a vertex in every part except some part $j$ (if this situation does
not occur, then it has already been determined whether $e$ is multicoloured
or not). Note that the conditional probability that $e$ is multicoloured
with respect to $\omega_{i-1}$ is $\Pr\left(\omega_{i-1}\left(i\right)=j\right)=1/r$,
whereas the corresponding probability for $\omega_{i}$ is 
\[
\Pr\left(\omega_{i}\left(i\right)=j\right)\ge\frac{\chi/r}{\chi-i+1}\ge\frac{1}{r}+\Omega\left(\frac{i-1}{\chi}\right).
\]
This proves the desired claim.
\end{proof}
As the strong chromatic number of $H$ is trivially at most $n$, we have the following immediate corollary.
\begin{corollary}
\label{cor:basic-m-n}For any fixed $2\le r\le k$ and any $n$-vertex,
$m$-edge $k$-multigraph $H$, there is an $r$-cut with excess $\Omega\left(m/n\right)$.
\end{corollary}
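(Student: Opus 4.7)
The plan is to derive \cref{cor:basic-m-n} directly from \cref{thm:chromatic}, so the only thing to verify is a crude upper bound on the strong chromatic number. Given an $n$-vertex $k$-multigraph $H$, I would exhibit the trivial strong colouring in which every vertex receives a distinct colour from $\{1,\dots,n\}$. This is vacuously a valid strong colouring, since all vertices of any edge automatically receive pairwise distinct colours, so the strong chromatic number $\chi$ of $H$ satisfies $\chi\le n$.

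Having established $\chi\le n$, I would simply invoke \cref{thm:chromatic}, which guarantees the existence of an $r$-cut with excess $\Omega(m/\chi)$. Substituting the bound $\chi\le n$ into this estimate yields an $r$-cut with excess $\Omega(m/n)$, which is exactly the conclusion of the corollary. The constant implicit in the $\Omega$-notation depends only on $k$ and $r$, as in the ambient conventions of the paper.

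There is no real obstacle here; the only mild subtlety is confirming that no additional hypothesis (such as simplicity or connectedness) is needed. This is the case because the strong-colouring argument in \cref{thm:chromatic} makes no use of such hypotheses, and the trivial colouring $v\mapsto v$ is a valid strong colouring for any multihypergraph whatsoever. Thus the corollary follows in one line from the theorem, and I would present it exactly that way.
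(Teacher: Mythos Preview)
Your proposal is correct and matches the paper's own proof exactly: the paper simply notes that the strong chromatic number is trivially at most $n$ and then invokes \cref{thm:chromatic}.
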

Next, the following corollary was mentioned in the introduction.
\begin{corollary}
\label{cor:sqrt-m}For any fixed $2\le r\le k$ and any $m$-edge $k$-multigraph $H$, there is an $r$-cut with excess $\Omega\left(\sqrt{m}\right)$.
\end{corollary}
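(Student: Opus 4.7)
The plan is to combine Theorem \ref{thm:chromatic} with a standard bound on the strong chromatic number in terms of the number of edges. Since the theorem gives us an $r$-cut with excess $\Omega(m/\chi)$, where $\chi$ is the strong chromatic number, it suffices to show that any $m$-edge $k$-multigraph $H$ satisfies $\chi = O(\sqrt{m})$ (with the implied constant depending on $k$).

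To bound $\chi$, I would pass to the auxiliary multigraph $G(H)$ defined in the notation section, which replaces each edge $e \in E(H)$ with a clique on $|e|$ vertices. A strong colouring of $H$ is precisely a proper colouring of $G(H)$, so $\chi$ equals the chromatic number of $G(H)$. Now $G(H)$ has at most $\binom{k}{2} m = O(m)$ edges. I would then invoke the classical fact that any (multi)graph with $E$ edges has chromatic number $O(\sqrt{E})$: a colour-critical subgraph on $\chi$ colours has minimum degree at least $\chi - 1$ and hence at least $\binom{\chi}{2}$ edges, giving $\chi = O(\sqrt{E})$. Applying this to $G(H)$ yields $\chi = O(\sqrt{m})$.

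Plugging this into Theorem \ref{thm:chromatic} immediately produces an $r$-cut with excess $\Omega(m/\sqrt{m}) = \Omega(\sqrt{m})$, as required. There is no real obstacle here; the only thing to verify is that the chromatic number bound is insensitive to edge multiplicities (it is, since multiplicities do not affect the underlying simple graph whose chromatic number we are computing), and that the constant hidden in $O(\sqrt{m})$ may depend on $k$, which is consistent with the convention stated in the introduction.
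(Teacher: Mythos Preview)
Your proposal is correct and takes essentially the same approach as the paper. Both combine Theorem~\ref{thm:chromatic} with the bound $\chi=O(\sqrt{m})$; the paper argues directly that in a minimal strong colouring every pair of colour classes must appear together in some hyperedge (giving $\binom{\chi}{2}\le\binom{k}{2}m$), which is exactly what your colour-critical argument applied to $G(H)$ amounts to.
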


\begin{proof}
Let $\chi$ be the strong chromatic number of $H$ and consider a
strong colouring with $\chi$ colours. Since every pair of distinct colours must
appear in an edge, $\binom{\chi}{2} \le m\binom{k}{2}$ and, hence, $\chi= O\left(\sqrt{m}\right)$.
\end{proof}

We end this section with a slight generalisation of \cref{cor:basic-m-n}. Recall that
$G\left(H\right)$ is the multigraph obtained by replacing each
edge $e\in E(H)$ with a clique $K_{\left|e\right|}$.

\begin{lemma}
\label{lem:advanced-m-n}For any fixed $2\le r\le k$, consider a
$k$-multigraph $H$. Suppose there is an $n'$-vertex subset $W$
inducing $m'$ edges of $G\left(H\right)$. Then $H$ has an $r$-cut of
excess $\Omega(m'/n')$.
\end{lemma}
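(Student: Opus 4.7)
The plan is to analyze the random $r$-cut $\omega$ of $V(H)$ obtained by letting $\omega|_{W}$ be a uniformly random equitable $r$-partition of $W$ (parts of sizes $\lfloor n'/r\rfloor$ or $\lceil n'/r\rceil$) and letting $\omega(v)$ be independent and uniform in $\{1,\dots,r\}$ for each $v\in V(H)\setminus W$. Writing $p=S(k,r)r!/r^{k}$ and $a_{e}=|e\cap W|$ for each edge $e\in E(H)$, the core of the proof is to establish: (i) $\Pr(e\text{ is multicoloured})=p$ whenever $a_{e}\le 1$; and (ii) $\Pr(e\text{ is multicoloured})\ge p+\Omega(1/n')$ whenever $a_{e}\ge 2$.

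Claim (i) is clear for $a_{e}=0$. For $a_{e}=1$, let $v$ be the unique vertex in $e\cap W$. By symmetry of the uniformly random equitable partition under permutations of the parts, $\omega(v)$ is marginally uniform on $\{1,\dots,r\}$ and is independent of the iid uniform colours of the other $k-1$ vertices of $e$. Conditioning on $\omega(v)=j$, the probability that $e$ is multicoloured equals the probability that a uniform $(k-1)$-tuple in $\{1,\dots,r\}$ covers $\{1,\dots,r\}\setminus\{j\}$; this is independent of $j$ by symmetry and equals $p$ (as seen by applying the same conditioning to a uniform $k$-tuple).

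For claim (ii), I adapt the vertex-by-vertex swapping argument of \cref{thm:chromatic}. Order the vertices of $e$ with the $a_{e}$ vertices of $e\cap W$ coming first and, for $0\le i\le k$, let $\omega_{i}$ be the hybrid law in which the first $i$ vertices of $e$ are coloured according to $\omega$ and the remaining $k-i$ are iid uniform; so $\omega_{0}$ is entirely iid (and gives multicolouring probability $p$) while $\omega_{k}$ is the true law. Coupling $\omega_{i-1}$ and $\omega_{i}$ to differ only on $v_{i}$: for $i>a_{e}$ nothing changes; while for $i\le a_{e}$, with $\Omega(1)$ probability the other $k-1$ vertices of $e$ cover exactly $r-1$ parts (this uses $k\ge r$), and in that critical case $v_{i}$ completes the multicolouring with probability $1/r$ under the iid law but with probability at least $(n'/r)/(n'-i+1)\ge 1/r+\Omega((i-1)/n')$ under the equitable law, since none of the $i-1$ already exposed $W$-vertices of $e$ lies in the missing part. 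Taking $i=2$ already gives the surplus $\Omega(1/n')$.

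Summing the per-edge excesses, $\E[\text{excess of }\omega]\ge\Omega(M_{2}/n')$ where $M_{2}:=|\{e\in E(H):|e\cap W|\ge 2\}|$. From $m'=\sum_{e\in E(H)}\binom{|e\cap W|}{2}\le\binom{k}{2}M_{2}$ we get $M_{2}\ge m'/\binom{k}{2}$, hence $\E[\text{excess}]=\Omega(m'/n')$ (with $k$ fixed), so some deterministic $r$-cut realises this excess. The main technical step is executing the $i=2$ coupling carefully in claim (ii); it reuses the calculation from \cref{thm:chromatic}, but one needs to track that some of the exposed $k-1$ vertices of $e$ are coloured equitably (those in $W$) while others are coloured iid (those outside $W$).
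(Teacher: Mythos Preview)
Your proof is correct and follows essentially the same approach as the paper: the same random cut (uniform equipartition on $W$, i.i.d.\ uniform outside $W$) analysed via the same vertex-by-vertex hybrid/coupling argument borrowed from \cref{thm:chromatic}, together with the observation that the number of edges $e$ with $|e\cap W|\ge 2$ is $\Omega(m')$. One minor point: your bound $\Pr(\omega_i(v_i)=j)\ge (n'/r)/(n'-i+1)$ and your claim that $\omega(v)$ is marginally uniform both tacitly assume $r\mid n'$; as in the proof of \cref{thm:chromatic}, this is handled by padding $W$ with at most $r-1$ isolated dummy vertices.
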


\begin{proof}
Let $\omega$ be the random $r$-cut where the parts of the vertices outside $W$ are chosen uniformly at random and the parts of the vertices inside $W$ are chosen with a uniformly random
equipartition of $W$. Virtually the same proof as for \cref{thm:chromatic}
shows that for each of the $\Omega(m')$ edges $e$ of $H$ with $\left|e\cap W\right|\ge2$,
the probability that $e$ is multicoloured is $S\left(k,r\right)r!/r^{k}+\Omega\left(1/n'\right)$.
Indeed, for each $0\le i\le |e\cap W|$, let $\omega_i$ be the random ``partial'' cut where the first $i$ vertices of $e\cap W$ are coloured according to $\omega$, and the last $|e\cap W|-i$ are coloured uniformly at random. As in the proof of \cref{thm:chromatic}, the probability $e$
is multicoloured with respect to $\omega_{i}$ is no less than the
the corresponding probability for $\omega_{i-1}$, and for $2\le i\le |e\cap W|$
these probabilities differ by $\Omega\left(1/n'\right)$.

Also, all edges $e$ of $H$ with $\left|e\cap W\right|<2$ are multicoloured in $\omega$ with probability exactly $S\left(k,r\right)r!/r^{k}$. Therefore, the expected excess of $\omega$ is $\Omega(m'/n')$.
\end{proof}

\section{\label{sec:excess-n}Proof of \texorpdfstring{\cref{thm:excess-n}}{Theorem~\ref{thm:excess-n}}}

In this section, we prove \cref{thm:excess-n}. By first choosing which
vertices are in the last $r-2$ parts, we will be able to reduce
the problem of finding a large-excess $r$-cut in a $k$-graph to
the problem of finding a large 2-cut in a certain mixed $k$-graph.
Note that we can generalise the notion of excess to a mixed multihypergraph
$H$ in an obvious way: if $Z$ is the size of a uniformly random
$r$-cut of $H$ and an $r$-cut has size $z$, then we say the excess
of that $r$-cut is $z-\E Z$. \cref{thm:excess-n} will be a consequence
of the following lemma. 
\begin{lemma}
\label{thm:excess-W}For any $k\ge3$, consider an $n$-vertex mixed
$k$-multigraph $H$ with its vertices ordered. Let $W$ be the set
of vertices which, according to this ordering, are among the first
two vertices of some edge of $H$ which has size at least three. Then
$H$ has a 2-cut with excess at least $\left|W\right|/2^{k}$.
\end{lemma}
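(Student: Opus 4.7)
The plan is to apply the method of conditional probabilities of Erd\H{o}s and Selfridge. We process the vertices $v_1,v_2,\ldots,v_n$ in the given order and choose each side $c_i\in\{0,1\}$ sequentially. Let $\Phi_i=\E[\mathrm{excess}\mid c_1,\ldots,c_i]$ denote the conditional expected excess; this is a Doob martingale with $\Phi_0=0$ and $\Phi_n$ equal to the actual excess of the constructed 2-cut. Greedily choosing $c_i$ to maximize $\Phi_i$ already gives $\Phi_n\ge 0$, and the task is to improve this by $|W|/2^k$.

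The key structural observation is that for each edge $e$ with $|e|=s\ge 2$ and ordered vertices $u_1(e),\ldots,u_s(e)$, the conditional expected excess contribution $\phi_e^{(i)}$ stays at $0$ until the step processing $u_2(e)$: by symmetry among the remaining $s-1$ vertices, processing $u_1(e)$ alone leaves $\phi_e$ unchanged. At the step processing $u_2(e)$, $\phi_e$ jumps to $+2^{1-s}$ if $c_{u_2(e)}\ne c_{u_1(e)}$, and to $-2^{1-s}$ if they agree. Hence each edge of size at least $3$ contributes a ``swing'' of $2^{2-s}\ge 2^{2-k}$ at its $u_2(e)$-step, from which the better choice of $c_i$ extracts a gain of $2^{1-s}\ge 2\cdot 2^{-k}$. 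Since each such edge contributes at most two vertices to $W$, so that $|W|\le 2 m_{\ge 3}$, the total gain available across all edges of size at least $3$ is at least $2 m_{\ge 3}/2^k\ge |W|/2^k$, exactly the target.

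We formalize this via an augmented potential $\tilde\Phi_i=\Phi_i+\beta_i$, where $\beta_i\ge 0$ is a reserve satisfying $\beta_0\ge |W|/2^k$ and $\beta_n=0$, and choose $c_i$ to maintain $\tilde\Phi_i\ge\tilde\Phi_{i-1}$. This would give $\Phi_n=\tilde\Phi_n\ge\tilde\Phi_0\ge |W|/2^k$, as required. The main obstacle is designing $\beta_i$ so that the non-decrease invariant really holds at every step. The tricky case is when $v_i$ serves as $u_2(e)$ for several edges at once whose $u_1$-vertices lie on opposite sides: the per-edge swings in $\Phi$ partially cancel, while a naive per-edge reserve would still decrement by the full amount. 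We plan to handle this by using a state-dependent per-edge reserve $\beta_e^{(i)}$ that is released into $\Phi$ only at the moment the gain from $e$ is actually captured---e.g., by letting $\beta_e^{(i)}$ depend on the side of $u_1(e)$ once that vertex has been assigned---so that cancelled gains remain ``locked'' in the reserve and are harvested at later steps. Combined with a case analysis distinguishing the roles of $v_i$ (as $u_1$, $u_2$, or later vertex of its various incident edges), taking $c_i$ to maximize $\tilde\Phi_i$ should preserve the invariant. Since every step is deterministic, the procedure produces, in polynomial time, a 2-cut with excess at least $|W|/2^k$.
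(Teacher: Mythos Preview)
Your outline shares the paper's starting point---the method of conditional probabilities with the martingale $\Phi_i$---and you correctly isolate the real difficulty: at a step where $v_i$ is the second vertex of several edges whose first vertices have already been placed on opposite sides, the per-edge swings in $\Phi$ cancel. But the proposal stops exactly where the proof has to begin. You promise a state-dependent per-edge reserve $\beta_e^{(i)}$ ``released only at the moment the gain from $e$ is actually captured'' and assert that ``taking $c_i$ to maximise $\tilde\Phi_i$ should preserve the invariant''. Neither the definition of $\beta_e^{(i)}$ nor the verification of the invariant is given, and there is no obvious candidate that works. For instance, if $\beta_e^{(i)}$ simply drops from $2^{1-|e|}$ to $0$ at the step processing $u_2(e)$, then in the cancelling situation $\tilde\Phi$ strictly decreases; if instead $\beta_e^{(i)}$ is chosen so that $\phi_e+\beta_e$ stays constant at $2^{1-|e|}$ (which is achieved by $\beta_e=2^{j-|e|}$ while $e$ is monochromatic after $j$ of its vertices are placed), then $\tilde\Phi$ is constant and the invariant is vacuous, but $\beta_n$ need not vanish. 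The vague suggestion to let $\beta_e$ ``depend on the side of $u_1(e)$'' amounts to using the reserve to break ties in the otherwise-indifferent early steps, and it is not at all clear how to do this consistently across all edges without global coordination. In short, the hard combinatorial content of the lemma is precisely the design and verification of such a potential, and that is what is missing.

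The paper does \emph{not} use a potential of this kind. Instead it modifies the algorithm itself: when the two choices for $v$ give the same conditional expectation, $v$ is left \emph{undetermined}; later, when one reaches a vertex sharing an uncertain edge with some undetermined vertices (the set $U_v$), one fixes the parts of $v$ and of all of $U_v$ simultaneously, choosing among the $2^{|U_v|+1}$ assignments to maximise the conditional expectation. The key claim is that this batch assignment always gains at least $|U_v|/2^{k-1}$ (proved by comparing the all-$1$ and all-$2$ assignments on $U$ against the average), and that the irrelevant undetermined vertices in $U\setminus U_v$ can be ignored. One then observes that every vertex of $W$ becomes determined by the time the third vertex of one of its witnessing edges is reached, so $|D|+\sum_v|U_v|\ge|W|$; since all conditional expectations are multiples of $1/2^{k-1}$, the total excess is at least both $|D|/2^{k-1}$ and $\sum_v|U_v|/2^{k-1}$, hence at least their average $|W|/2^k$. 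The batching of decisions is exactly what circumvents the cancellation you identified; your one-vertex-at-a-time scheme with a reserve would need a genuinely new idea to achieve the same effect.
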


\begin{proof}
We assume that the vertex set $V$ consists of the integers $1,\dots,n$
with the natural ordering.

We greedily build a cut $\omega:V\to\left\{ 1,2\right\} $ using the
method of conditional probabilities, with the slight adaptation that
we temporarily mark some vertices as ``undetermined'' if their choice
of part has no effect on the subsequent conditional expectation. We
will determine the part of an undetermined vertex $v$ later, when
we reach a vertex that shares an edge with $v$. The details are as
follows.

Following our chosen order, we iteratively do the following for
each vertex $v$. Let $U$ be the set of undetermined vertices that
precede $v$. For each edge $e$, let $E_{e}$ be the event that $e$
is multicoloured (2-coloured), conditioning on the choices $\omega\left(v'\right)$
we have made so far for the determined vertices $v'$, and choosing
the parts $\omega\left(v''\right)$ of the other vertices $v''$ uniformly
at random. Let $Z_{v}=\sum_{e}\one_{E_{e}}$ be the size of this random
cut (so $Z_1$ is just the unconditional size of a uniformly random cut). Say that an edge $e$ is \emph{uncertain} if $\Pr\left(E_{e}\right)\notin\left\{ 0,1\right\}$
and let $U_{v}\subseteq U$ be the set of undetermined vertices $u\in U$
such that $u$ and $v$ share an uncertain edge. For each of the $2^{\left|U\right|+1}$
possible assignments of parts for the vertices in $U\cup\left\{ v\right\} $,
consider the conditional expectation of $Z_{v}$ given that assignment.
The average of these conditional expectations is $\E Z_{v}$. If all
these conditional expectations are exactly $\E Z_{v}$, mark $v$
as undetermined. Otherwise, for an assignment that maximizes this conditional
expectation, fix the parts of the vertices in $U_{v}\cup\left\{ v\right\} $
according to that assignment. The vertices
in $U_{v}$ are no longer undetermined. (We emphasise that we are considering an assignment of parts for all the vertices in $U\cup\left\{ v\right\} $, including the vertices in $U\setminus U_{v}$, but we are only fixing the parts of the vertices in $U_{v}\cup\left\{ v\right\} $.)  Finally, at the end of the
procedure, after going through all the vertices, choose the parts
for the remaining undetermined vertices arbitrarily.

Now we show that this procedure produces a cut with the desired excess.
First, we make the important observation that each of the conditional
expectations compared during the algorithm is an integer multiple
of $1/2^{k-1}$. Next, we prove the following claim.

\begin{claim*}
At each step $v$ of the process,
\begin{enumerate}
\item [(1)]there is an assignment $A$ of the vertices in
$U\cup\left\{ v\right\} $ such that $\E\left[Z_{v}\cond A\right]\ge\E Z_{v}+\left|U_{v}\right|/2^{k-1}$;
\item [(2)]for any assignment $A^v$ of the vertices in $U_{v}\cup \{v\}$ and any assignment $A^U$ of the vertices in $U\setminus U_v$, $\E\left[Z_{v}\cond A^v,A^U\right]=\E\left[Z_{v}\cond A^v\right]$.
\end{enumerate}
\end{claim*}

\begin{proof}
Consider some step $v$ and assume (1) and (2) held for all preceding
steps. First note that for any assignment $A$ of the vertices in $U$, we have
\setcounter{equation}{2}
\begin{equation}
\E\left[Z_{v}\cond A\right]=\E Z_{v}.\label{eq:undetermined-irrelevant}
\end{equation}
If $v-1$ is undetermined then this immediately follows from the definition of an undetermined vertex, and otherwise it follows from the fact that (2) held in step $v-1$
(note that no $u\in U$ can have
been in $U_{v-1}$ or otherwise it would have become determined). Note also that there is no uncertain
edge which involves more than one vertex of $U$, because the later
vertex would have become determined at its step by (1).

Now, to prove (2), note that any conditional expectation of $Z_{v}$
is the sum of the conditional probabilities of each edge being multicoloured. The
set of uncertain edges containing a vertex in $U\setminus U_{v}$
is disjoint from the set of uncertain edges containing a vertex in $U_v\cup \{v\}$, so we
can express $Z_{v}$ as the sum of two terms $Z_{v}^{U}$ and $Z_{v}^{v}$,
the former of which depends on $\left(\omega\left(v'\right)\right)_{v'\in U\setminus U_{v}}$
but not $\left(\omega\left(v'\right)\right)_{v'\in U_v\cup \{v\}}$, and the latter of which depends on
$\left(\omega\left(v'\right)\right)_{v'\in U_v\cup \{v\}}$ but not $\left(\omega\left(v'\right)\right)_{v'\in U\setminus U_{v}}$ (strictly speaking there is also a third term which doesn't depend on any vertex in $U$, but we can view this as being part of say $Z_{v}^{v}$).
Applying \cref{eq:undetermined-irrelevant}, for any assignment $A^U$
of the vertices in $U\setminus U_{v}$, we have
\[
\E Z_{v}^{U}+\E Z_{v}^{v}=\E Z_{v}=\E\left[Z_{v}\cond A^U\right]=\E\left[Z_{v}^{U}\cond A^U\right]+\E Z_{v}^{v}.
\]
So, $\E\left[Z_{v}^{U}\cond A^U\right]=\E Z_{v}^{U}$. Therefore, for
any $A^U$, $A^v$ as in the statement of (2),
\[
\E\left[Z_{v}\cond A^v,A^U\right]=\E\left[Z_{v}^{U}\cond A^U\right]+\E\left[Z_{v}^{v}\cond A^v\right]=\E Z_{v}^{U}+\E\left[Z_{v}^{v}\cond A^v\right]=\E\left[Z_{v}\cond A^v\right],
\]
proving (2). Finally, we prove (1) for step $v$. Since no edge contains multiple vertices of $U$, there
are at least $\left|U_{v}\right|$ uncertain edges containing both
$v$ and a vertex $u\in U$. Let $A_{1}$ be the assignment of all
vertices in $U$ to part 1 and $A_{2}$ the assignment
of all such vertices to part 2. Recalling \cref{eq:undetermined-irrelevant}, we will now give a lower bound
for
\begin{align*}
 & \E\left[Z_{v}\cond A_{2},\omega\left(v\right)=1\right]-\E\left[Z_{v}\cond A_{1},\omega\left(v\right)=1\right]\\
 & \quad=\left(\E\left[Z_{v}\cond A_{2},\omega\left(v\right)=1\right]-\E\left[Z_{v}\cond A_{2}\right]\right)-\left(\E\left[Z_{v}\cond A_{1},\omega\left(v\right)=1\right]-\E\left[Z_{v}\cond A_{1}\right]\right).
\end{align*}
To this end, for each edge $e$, define
\[
P_{e}=\left(\Pr\left[E_{e}\cond A_{2},\omega\left(v\right)=1\right]-\Pr\left[E_{e}\cond A_{2}\right]\right)-\left(\Pr\left[E_{e}\cond A_{1},\omega\left(v\right)=1\right]-\Pr\left[E_{e}\cond A_{1}\right]\right).
\]
We consider all possible cases for $e$, as follows:
\begin{itemize}
\item if $e$ is not uncertain, then $P_{e}=0-0=0$;
\item if $e$ does not contain $v$, then $P_{e}=0-0=0$;
\item if $e$ does not contain a vertex of $U$, then 
\[
P_{e}=\left(\Pr\left[E_{e}\cond\omega\left(v\right)=1\right]-\Pr\left[E_{e}\right]\right)-\left(\Pr\left[E_{e}\cond\omega\left(v\right)=1\right]-\Pr\left[E_{e}\right]\right)=0;
\]
\item if $e$ contains $v$ and a vertex $u\in U$, then:
\begin{itemize}
\item if $e$ contains no determined vertices, then $P_{e}\ge2^{-\left(k-1\right)}-\left(-2^{-\left(k-1\right)}\right)\ge2^{-\left(k-2\right)}$;
\item if $e$ contains a determined vertex in part 1, but no determined
vertex in part 2, then $P_{e}\ge0-\left(-2^{-\left(k-2\right)}\right)\ge2^{-\left(k-2\right)}$;
\item if $e$ contains a determined vertex in part 2, but no determined
vertex in part 1, then $P_{e}\ge2^{-\left(k-2\right)}-0\ge2^{-\left(k-2\right)}$.
\end{itemize}
\end{itemize}
We have shown that $P_{e}\ge0$ and $P_{e}\ge2^{-\left(k-2\right)}$
if $e$ is uncertain and contains both $v$ and a vertex $u\in U$.
Therefore,
\[
\E\left[Z_{v}\cond A_2,\omega\left(v\right)=1\right]-\E\left[Z_{v}\cond A_1,\omega\left(v\right)=1\right]\ge\left|U_{v}\right|/2^{k-2}.
\]
This means that there is a choice $A\in\left\{ A_{1},A_{2}\right\} $,
such that $\E\left[Z_{v}\cond A,\omega\left(v\right)=1\right]$ differs
by at least $\left|U_{v}\right|/2^{k-1}$ from $\E Z_{v}=\E\left[Z_{v}\cond A\right]$.
This choice of $A$, and some choice of a part for $v$, proves (1).
\end{proof}
Now, at the end of the procedure, let $D$ be the set of vertices
which did not get marked as undetermined during their step. Then the
total number of determined vertices at the end of the procedure is
$\left|D\right|+\sum_{v}\left|U_{v}\right|$, because each vertex
in $U_{v}$ becomes determined at step $v$ by part (1) of the above
claim. Note that each vertex in $W$, being among the first two vertices in
some edge $e$, becomes determined at the point when we reach the
third vertex in $e$, at the latest. Indeed, when we reach the third vertex $v$ of an edge $e$, either both of the first two vertices have already been determined, or else $e$ has at most one determined vertex and is therefore uncertain ($e$ could still either receive two different colours or not). In the latter case, $U_v$ contains all the edges of $e$ that were previously marked as undetermined, and therefore they become determined at this step. It follows that
\[
\left|D\right|+\sum_{v}\left|U_{v}\right|\ge\left|W\right|.
\]
Let $Z_{n+1}$ be the eventual size of the 2-cut produced by the procedure. Recalling part (2) of the above claim, note that $\E Z_{v+1}=\max_{A}\E\left[Z_{v}\cond A\right]$, where the maximum is over all assignments $A$ of the vertices in $U\cup\left\{ v\right\} $. The expected excess of our 2-cut is
\[
\E Z_{n+1}-\E Z_{1}=\sum_{v=1}^{n}(\E Z_{v+1}-\E Z_{v})=\sum_{v=1}^{n}(\max_{A}\E\left[Z_{v}\cond A\right]-\E Z_{v}).
\]
Now, for each $v\in D$, we have $\max_{A}\E\left[Z_{v}\cond A\right]-\E Z_{v}\ne 0$, so, recalling that all conditional expectations are an integer multiple of $1/2^{k-1}$, the expected excess is at least $\left|D\right|/2^{k-1}$. By part (1) of the above claim, this expected excess is also at least $\sum_{v}\left|U_{v}\right|/2^{k-1}$.
Taking the average of these two lower bounds, the expected excess is therefore at least
\[
\frac{\left|D\right|+\sum_{v}\left|U_{v}\right|}{2^{k}}\ge\frac{\left|W\right|}{2^{k}}.
\]
The desired result follows.
\end{proof}
The 2-cut case of \cref{thm:excess-n} is a very basic corollary of
\cref{thm:excess-W}, as follows.
\begin{proof}[Proof of the 2-cut case of \cref{thm:excess-n}]
Suppose that $k > 2$ and $H$ is a $k$-multigraph with no isolated vertices. Consider
a random ordering of the vertices of $H$. For any vertex $v$, there is an edge containing $v$, and the probability that $v$ is among one of the first two vertices of that edge is $2/k$. Therefore, in the notation of \cref{thm:excess-W},
\[
\E\left|W\right|=\sum_{v}\Pr\left(v\in W\right)\ge\left(2/k\right)n.
\]
Applying \cref{thm:excess-W} with an ordering satisfying $\left|W\right|\ge2n/k$,
we conclude that there is a 2-cut with excess at least $n/\left(k2^{k-1}\right)$,
as desired.
\end{proof}
Note that with essentially the same proof (considering a random ordering),
we have a corresponding result for 2-cuts of mixed $k$-graphs.
\begin{lemma}
\label{lem:excess-mixed}For any fixed $k\ge3$, consider a
mixed $k$-multigraph $H$ such that at least $n$ vertices are contained in edges of size at least three. Then $H$ has
a 2-cut with excess at least $n/\left(k2^{k-1}\right)$.
\end{lemma}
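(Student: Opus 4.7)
The plan is to mimic the derivation of the 2-cut case of \cref{thm:excess-n} from \cref{thm:excess-W}, using a random ordering of $V(H)$ to produce an ordering whose associated set $W$ (in the sense of \cref{thm:excess-W}) is linear in $n$.

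Let $V^{*}\subseteq V(H)$ be the set of vertices that lie in some edge of size at least three, so by hypothesis $|V^{*}|\ge n$. For each $v\in V^{*}$, fix (arbitrarily) one edge $e_{v}$ with $v\in e_{v}$ and $|e_{v}|\ge 3$; write $\ell_{v}=|e_{v}|\le k$. Now choose a uniformly random linear ordering $\pi$ of $V(H)$, and let $W=W(\pi)$ be the set of vertices which, under $\pi$, appear among the first two vertices of some edge of $H$ of size at least three.

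For each $v\in V^{*}$, the position of $v$ relative to the other vertices of $e_{v}$ is uniformly random, so the probability that $v$ is among the first two vertices of $e_{v}$ (in the restriction of $\pi$ to $e_{v}$) is $2/\ell_{v}\ge 2/k$. In that event $v\in W$, so
\[
\mathbb{E}\,|W|\;\ge\;\sum_{v\in V^{*}}\Pr(v\in W)\;\ge\;\frac{2|V^{*}|}{k}\;\ge\;\frac{2n}{k}.
\]
Consequently there exists some ordering $\pi_{0}$ for which $|W(\pi_{0})|\ge 2n/k$. Applying \cref{thm:excess-W} with this ordering then yields a 2-cut of $H$ with excess at least $|W(\pi_{0})|/2^{k}\ge n/(k\,2^{k-1})$, as desired.

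There is no real obstacle here beyond checking that \cref{thm:excess-W} applies verbatim to mixed $k$-multigraphs, which it does since its statement already allows edges of size less than $k$ (those edges simply never contribute to $W$). The only minor point to verify is that choosing an arbitrary representative edge $e_{v}$ of size $\ge 3$ for each $v\in V^{*}$ is harmless: any other edge of size $\ge 3$ through $v$ can only help $v$ land in $W$, so the lower bound on $\Pr(v\in W)$ via $e_{v}$ alone suffices.
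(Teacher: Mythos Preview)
Your proof is correct and is exactly the argument the paper has in mind: the paper simply remarks that \cref{lem:excess-mixed} follows by ``essentially the same proof (considering a random ordering)'' as the 2-cut case of \cref{thm:excess-n}, and you have written out precisely that argument. Your additional observations about why \cref{thm:excess-W} applies directly to mixed $k$-multigraphs and why a single representative edge $e_v$ suffices are accurate and fill in the details the paper omits.
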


Combining this with the fact (mentioned in the introduction) that a multigraph with $n$ non-isolated vertices has a cut with excess at least $n/6$, we have the following result, which we need below.

\begin{corollary}
\label{cor:excess-mixed}For any fixed $k\ge2$, consider a
mixed $k$-multigraph $H$ such that at least $n$ vertices are contained in edges of size $k$. Then $H$ has
a 2-cut with excess at least $n/\left(k2^k\right)$.
\end{corollary}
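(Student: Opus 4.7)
My plan is to split into two cases, $k \ge 3$ and $k = 2$, and in each case reduce directly to an already-established result.

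For $k \ge 3$, observe that every edge of size $k$ has size at least three. Hence the hypothesis that at least $n$ vertices lie in edges of size $k$ is stronger than the hypothesis of \cref{lem:excess-mixed}. Applying \cref{lem:excess-mixed} immediately yields a 2-cut with excess at least $n/(k 2^{k-1})$, which exceeds the desired bound $n/(k 2^k)$ with room to spare.

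For $k = 2$, which lies outside the range of \cref{lem:excess-mixed}, the edges of size $k$ are simply ordinary multigraph edges. The key observation is that edges of size at most one in $H$ are never multicoloured, so they contribute neither to the size of any 2-cut nor to the expected size of a uniformly random 2-cut; discarding them therefore does not change the excess of any 2-cut in $H$. What remains is a multigraph with at least $n$ non-isolated vertices, to which I would apply the classical Erdős--Gyárfás--Kohayakawa bound (valid for multigraphs, as noted in the proof of \cref{thm:3-2}) to obtain a 2-cut with excess at least $n/6 \ge n/8 = n/(k 2^k)$.

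The corollary is essentially a clean packaging of \cref{lem:excess-mixed} together with the graph case, so I do not anticipate any substantive obstacle. The only things to check are the two numerical comparisons $n/(k 2^{k-1}) \ge n/(k 2^k)$ and $n/6 \ge n/(2 \cdot 2^2)$, both of which are trivial.
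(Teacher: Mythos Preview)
Your proof is correct and follows essentially the same approach as the paper: the paper simply states that the corollary follows by combining \cref{lem:excess-mixed} with the Erd\H{o}s--Gy\'arf\'as--Kohayakawa bound $n/6$ for multigraphs, and you have filled in precisely this case split with the right numerical checks.
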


Now we use \cref{lem:excess-mixed} to prove \cref{thm:excess-n} in
the case where $r>2$.
\begin{proof}[Proof of the $r>2$ case of \cref{thm:excess-n}]
Let $V=V\left(H\right)$. As we have seen, an $r$-cut can be represented
as a function $\omega:V\to\left\{ 1,\dots,r\right\} $. Consider the
``information restriction'' function $P$, defined as follows. For
any $\omega:V\to\left\{ 1,\dots,r\right\} $, let $P\left(\omega\right)=P\omega$
be the function $V\to\left\{ *,3,\dots,r\right\} $ with
\begin{align*}
P\omega\left(v\right) & =\begin{cases}
* & \text{if }\omega\left(v\right)\in\left\{ 1,2\right\} ,\\
\omega\left(v\right) & \text{otherwise}.
\end{cases}
\end{align*}
For $\rho:V\to\{*,3,\dots,r\}$, let $\Hpart{\rho}$ be the mixed $(k-r+2)$-multigraph
with edge multiset
\[
\left\{ e\cap\rho^{-1}\left(*\right): e \in E(H), \rho\left(e\right)\supseteq\left\{ 3,\dots,r\right\} ,\,\left|e\cap\rho^{-1}\left(*\right)\right|\ge2\right\}
\]
and vertex set $\rho^{-1}\left(*\right)$. Note that a 2-cut
of $\Hpart{\rho}$ with size $z$ corresponds to an $r$-cut $\omega$
of $H$ with the same size $z$ satisfying $P\omega=\rho$. 

Now, let $\omega$ be a uniformly random $r$-cut of $H$ and let
$Z$ be its size. Let $\E\left[Z\cond P\omega\right]$ be the conditional expectation of $Z$ given $P\omega$, which is precisely the average size of a uniformly random 2-cut of $\Hpart{P\omega}$. Let $Q$ be the number of vertices in $\Hpart{P\omega}$ which are contained in an edge of size $k-r+2$. By \cref{cor:excess-mixed}, for any outcome of $P\omega$, the multihypergraph $\Hpart{P\omega}$ has a 2-cut with size at least $\E\left[Z\cond P\omega\right]+Q/\left((k-r+2)2^{k-r+2}\right)$.

Each vertex $v$ of $H$ is contained in at least one edge $e$ of size $k$ and the probability that $e$ corresponds to a size-$(k-r+2)$ edge in $\Hpart{P\omega}$, containing $v$, is $\Omega(1)$. For example, this occurs if $v$ and $k-r+1$ other vertices are put in part $1$, and the remaining $r-2$ vertices are put in parts $3,\dots,r$. It follows that $\E Q=\Omega(n)$ and, therefore,
$$\E\left[\E\left[Z\cond P\omega\right]+\frac{Q}{(k-r+2)2^{k-r+2}}\right]=\E Z+\Omega(n).$$
We deduce that there is an outcome of $P\omega$ such that $\Hpart{P\omega}$ has a 2-cut with size $\E Z+\Omega(n)$, which corresponds
to an $r$-cut of $H$ with the same size, and therefore with excess $\Omega\left(n\right)$.
\end{proof}

\section{\label{sec:nontrivial-reductions}Reductions for the proof of \texorpdfstring{\cref{thm:nontrivial}}{Theorem~\ref{thm:nontrivial}}}

We now turn to the proof of \cref{thm:nontrivial}. In this section we make a few simple reductions in preparation for \cref{subsec:3-cuts-almost-linear,subsec:2-cuts-almost-linear}, which will comprise the heart of the proof of \cref{thm:nontrivial}.

First, note that if $H$ is very dense, then it trivially has a large-excess $r$-cut by \cref{cor:basic-m-n}. In fact, it is not hard to show that $H$ has a large-excess $r$-cut whenever it is ``far'' from being a linear hypergraph in various senses. In \cref{subsec:almost-linear-reduction} we collect a number of lemmas of this type.

Second, recall that in the proof of \cref{thm:excess-n}, we used a partial exposure trick to deduce a general theorem about $r$-cuts in $k$-graphs from a corresponding result for 2-cuts in mixed multihypergraphs. In \cref{subsec:special-reduction} we make some reductions of this type for \cref{thm:nontrivial}, essentially showing that it suffices to consider the case of 3-cuts in 3-multigraphs and the case of 2-cuts in mixed multihypergraphs.

By the end of the section, we will have shown that in order to prove \cref{thm:nontrivial}, it suffices to prove two lemmas (\cref{lem:almost-linear-3,lem:almost-linear-2}) concerning 3-cuts in almost-linear 3-graphs and concerning 2-cuts in almost-linear mixed hypergraphs. These lemmas will be proved in \cref{subsec:3-cuts-almost-linear,subsec:2-cuts-almost-linear}. 

\subsection{\label{subsec:almost-linear-reduction}Reduction to the almost-linear case}

In a linear hypergraph, all degrees are $O\left(n\right)$ and
the joint degree (or 2-degree) of any pair of vertices is at most
1. We first show how to reduce to the case where, except for a small
subset of bad vertices, the degrees and 2-degrees are not too
large, that is, $H$ is almost linear. We use notation of the form $\deg_H(u)$ and $\deg_H(u,v)$ to emphasise that degrees and 2-degrees are relative to $H$ and not to any induced subgraph $H[U]$.

\begin{lemma}
\label{lem:2-degree-structure}For any fixed $2\le r\le k$, consider
an $n$-vertex, $m$-edge $k$-multigraph $H$. For any $q,g,\Delta$, at least one of the following holds: there is an $r$-cut with excess $\Omega\left(gq\right)$ or there
is a vertex subset $U$ with $\left|U\right|\ge n-2q-km/\Delta$ such
that for distinct $u,v\in U$, $\deg_H\left(v\right)\le\Delta$
and $\deg_H\left(u,v\right)\le g$.
\end{lemma}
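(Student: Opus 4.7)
My plan splits into a dichotomy setup, a random-cut construction using a matching in the codegree graph, and a per-edge boost estimate.

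First I would isolate the high-degree vertices $U_{1}=\{v\in V(H):\deg_{H}(v)>\Delta\}$; since $\sum_{v}\deg_{H}(v)=km$, double counting gives $|U_{1}|\le km/\Delta$. On $V_{2}:=V(H)\setminus U_{1}$, form the codegree graph $G'$ whose edges are the pairs $\{u,v\}$ with $\deg_{H}(u,v)>g$. A set $U$ as in the second alternative of the lemma is exactly an independent set in $G'$ of size at least $n-2q-km/\Delta$, so if such a $U$ exists we are done. Otherwise the independence number satisfies $\alpha(G')<n-2q-km/\Delta$ and, since $|V_{2}|\ge n-km/\Delta$, the vertex-cover number is $\tau(G')=|V_{2}|-\alpha(G')>2q$; combined with the elementary bound that the vertices of any maximum matching form a vertex cover (so $\tau\le 2\mu$), the matching number satisfies $\mu(G')>q$. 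Hence $G'$ contains a matching $M=\{\{u_{1},v_{1}\},\dots,\{u_{q+1},v_{q+1}\}\}$ of $q+1$ disjoint pairs, each satisfying $\deg_{H}(u_{i},v_{i})>g$.

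Next I would use $M$ to build a random $r$-cut $\omega$ with expected excess $\Omega(gq)$. For each $i$, sample the ordered pair $(\omega(u_{i}),\omega(v_{i}))$ uniformly among the $r(r-1)$ pairs with distinct coordinates in $\{1,\dots,r\}$, and colour every other vertex uniformly and independently in $\{1,\dots,r\}$. Each vertex has uniform marginal colour, so by linearity the expected excess of $\omega$ equals
\[
\sum_{e\in E(H)}\left(p_{e}-\frac{S(k,r)r!}{r^{k}}\right),
\]
where $p_{e}$ is the probability that $e$ is multicoloured under $\omega$. Writing $s_{e}=|\{i:\{u_{i},v_{i}\}\subseteq e\}|$ for the number of matched pairs entirely contained in $e$, edges with $s_{e}=0$ contribute $0$, and since $M$ is a matching, $s_{e}\le\lfloor k/2\rfloor$, which forces $|\{e:s_{e}\ge 1\}|\ge (2/k)\sum_{e}s_{e}=(2/k)\sum_{i}\deg_{H}(u_{i},v_{i})>2g(q+1)/k$.

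The crux is a uniform lower bound on the boost: $p_{e}\ge S(k,r)r!/r^{k}+c_{k,r}$ for every edge with $s_{e}\ge 1$, where $c_{k,r}>0$ depends only on $k$ and $r$; combined with the count above this yields expected excess $\Omega(gq)$ and hence the first alternative. For $s_{e}=1$ I would compute the boost as $(p_{\mathrm{diff}}-p_{\mathrm{same}})/r$, where $p_{\mathrm{diff}}$ (resp.\ $p_{\mathrm{same}}$) is the probability $e$ is multicoloured given two of its vertices lie in distinct (resp.\ equal) parts and the other $k-2$ are uniform; a coupling on the randomness of the $k-2$ free vertices gives $p_{\mathrm{diff}}>p_{\mathrm{same}}$ since covering the residual $r-2$ parts is strictly easier than covering $r-1$ parts. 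The main technical obstacle is the $s_{e}\ge 2$ case, which I would handle by a monotone coupling that re-randomises all but one of the matched pairs in $e$ to an independent uniform pair: a forced-different pair always occupies exactly two parts, whereas a uniform pair occupies at most two, so by monotonicity of multicolouring in the number of parts pre-covered, re-randomising can only decrease $p_{e}$, reducing the claim to the established $s_{e}=1$ case.
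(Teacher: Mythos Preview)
Your proof is correct and follows essentially the same approach as the paper's. Both arguments set up the dichotomy via a matching/independent-set alternative in the high-codegree graph, then build the same biased random cut (matching pairs forced into distinct parts, all else uniform) and show a per-edge boost of $\Omega(1)$ for every edge containing a matched pair; the paper packages the $s_e\ge 2$ boost as a separate lemma (its \cref{lem:multi-prob-increase}), proved by the same inductive monotone-coupling idea you sketch, and the only cosmetic difference is that the paper forms the codegree graph on all of $V(H)$ and removes high-degree vertices afterwards rather than beforehand.
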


For the proof of \cref{lem:2-degree-structure} we will need the simple observation that, for a uniformly random $r$-cut in a multihypergraph, conditioning on the event that some disjoint edges are ``partially multicoloured'' increases the probability that an edge containing them is multicoloured.

\begin{lemma}
\label{lem:multi-prob-increase}
Consider a uniformly random $r$-cut of a multihypergraph $H$ and, for any $f\subseteq V(H)$ and $\ell\le r$, let $E_{f,\ell}$ be the event that $f$ has vertices in at least $\ell$ different parts (so that $E_f:=E_{f,r}$ is the event that $f$ is multicoloured). Then, for any $e\in E(H)$ of size $k$, any disjoint $f_1,\dots,f_s\subseteq e$ of size at least 2, and any $\ell_1,\dots,\ell_s\ge 2$,
$$\Pr\left(E_e\cond \bigcap_{i=1}^s E_{f_i,\ell_i}\right)=\frac{S\left(k,r\right)r!}{r^{k}}+\Omega\left(1\right).$$
\end{lemma}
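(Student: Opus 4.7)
The plan is to exploit the disjointness of the $f_i$'s in $e$, so that the events $E_{f_1,\ell_1},\dots,E_{f_s,\ell_s}$ depend on disjoint sets of vertex colours and are mutually independent. Write $e_0 = e \setminus \bigcup_i f_i$, and let $T_i$ denote the (random) set of colours used on $f_i$ and $T_0$ the set of colours used on $e_0$. Under the conditioning, the sets $T_0, T_1, \dots, T_s$ remain mutually independent, with each $T_i$ distributed as the image of a uniform colouring of $f_i$ further conditioned on $|T_i|\ge\ell_i$, and $T_0$ still unconditional. The event $E_e$ is simply $T_0 \cup \bigcup_i T_i = [r]$, which is a monotone-increasing function of $(T_0, T_1, \dots, T_s)$ with respect to coordinatewise inclusion.

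The key observation is that conditioning $T_i$ on $\{|T_i|\ge\ell_i\}$ stochastically increases $T_i$ in the inclusion order. Indeed, conditionally on $|T_i|=j$, $T_i$ is uniform over size-$j$ subsets of $[r]$, so the joint distribution of $T_i$ is determined by the marginal distribution of $|T_i|$, and conditioning on $|T_i|\ge\ell_i$ shifts this integer-valued marginal upward in the usual stochastic order. Strassen-type coupling then yields, independently across $i$, a joint law in which the conditional $T_i$ contains the unconditional one almost surely. Combining with the monotonicity of $E_e$ gives the weak inequality
\[
\Pr\!\left(E_e \cond \bigcap_i E_{f_i,\ell_i}\right) \;\ge\; \Pr(E_e) \;=\; \frac{S(k,r)\,r!}{r^{k}}.
\]

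For the strict improvement by $\Omega(1)$, I would argue by finiteness. The conditional and unconditional probabilities depend only on the integer parameters $k$, $r$, $(|f_i|,\ell_i)_{i\le s}$ and $|e_0|$, and for fixed $k$ and $r$ there are only finitely many configurations. In each configuration the conditioning is non-trivial (since $|f_i|\ge\ell_i\ge 2$ means that a monochromatic colouring of $f_i$ exists but fails $E_{f_i,\ell_i}$), so the stochastic shift in at least one $T_i$ is genuine; moreover $E_e$ is strictly monotone in the coordinate being shifted on the set of colourings where $[r]$ has not yet been covered, and such colourings occur with positive probability. Hence the inequality above is strict in each case, and by finiteness the gap is bounded below by a positive constant depending only on $k$ and $r$. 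The main obstacle is the boundary case $e_0=\varnothing$, in which the missing colours must be supplied entirely by the $T_i$'s and the strict monotonicity has to be checked by hand rather than via the free vertices of $e_0$; however, this is still a finite check for each $(k,r)$ and does not affect the overall argument.
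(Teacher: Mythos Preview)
Your approach is correct and takes a genuinely different route from the paper's. The paper proceeds by induction on $s$: assuming the result for $s-1$ conditioning events, it works in the conditional space given $E' = \bigcap_{i<s} E_{f_i,\ell_i}$ and shows that further conditioning on $E_{f_s,\ell_s}$ strictly increases $\Pr(E_e)$. The key step there is to condition on the actual colouring $\rho$ of $f_s$, observe by symmetry that $\Pr(E_e \mid A_\rho)$ depends only on $|\rho(f_s)|$, and use a coupling to show it is strictly increasing in $|\rho(f_s)|$; the $\Omega(1)$ then comes from the fact that all relevant probabilities are multiples of $1/r^k$. Your argument instead exploits the full product structure in one shot: the disjointness of the $f_i$ makes the $T_i$ independent, each conditioning stochastically increases its $T_i$ in the inclusion order, and $E_e$ is monotone in the tuple $(T_0,T_1,\dots,T_s)$. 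This is cleaner for the weak inequality and the finiteness reduction for $\Omega(1)$ is a nice touch.

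The only soft spot is your justification of strict inequality in each fixed configuration. You assert that ``$E_e$ is strictly monotone in the coordinate being shifted on the set of colourings where $[r]$ has not yet been covered, and such colourings occur with positive probability,'' and defer the $e_0=\varnothing$ case to a ``finite check''. This is not yet a proof: one must rule out the possibility that whenever the coupling gives $T_1^{\mathrm{uncond}} \subsetneq T_1^{\mathrm{cond}}$, the remaining coordinates already cover $[r]$ (so the shift is wasted), or that $T_1^{\mathrm{cond}}$ never manages to supply the missing colours. In fact this cannot happen --- for instance, if some $\ell_i = r$ then $T_i^{\mathrm{cond}} = [r]$ deterministically and $\Pr(E_e \mid \text{cond}) = 1 > \Pr(E_e)$, while otherwise one can arrange $T_0 \cup \bigcup_{j\ne 1} T_j^{\mathrm{cond}}$ to miss exactly the colours that $T_1^{\mathrm{cond}}$ supplies but $T_1^{\mathrm{uncond}}$ (taken monochromatic) does not --- but you should spell out at least one such argument rather than invoking finiteness as a substitute for it. The paper's inductive approach makes the strictness slightly more transparent, since at each step one only needs to compare $\Pr(E_e \mid A_\rho)$ for two nested images $\rho'(f) \subsetneq \rho(f)$, which is a one-variable monotonicity.
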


\begin{proof}
We prove this by induction on $s$, assuming that it is true for all smaller values of $s$ (so if $s=1$ we are making no assumption). Let $E'=\bigcap_{i=1}^{s-1} E_{f_i,\ell_i}$ (if $s=1$ this event is the entire probability space). Either trivially or by the inductive hypothesis, we have $\Pr\left(E_e\cond E'\right)\ge S\left(k,r\right)r!/r^{k}$.

Given this observation, it suffices to show that $\Pr\left(E_e\cond E'\cap E_{f_s,\ell_s}\right)>\Pr\left(E_e\cond E'\cap \overline{E_{f_s,\ell_s}}\right)$, because each individual colouring has probability at least $1/r^k=\Omega(1)$. This is equivalent to showing that $\Pr\left(E_e\cond E_{f_s,\ell_s}\right)>\Pr\left(E_e\cond \overline{E_{f_s,\ell_s}}\right)$ in the \emph{conditional} probability space where we are conditioning on $E'$. So, for the rest of the proof, we work in this conditional probability space. Let $f=f_s$ and $\ell=\ell_s$.

For each particular choice $\rho:f\to \{1,\dots,r\}$ of the parts of vertices in $f$, let $A_\rho$ be the event that our random cut agrees with $\rho$ on $f$. Then $E_{f,\ell}$ is the disjoint union of all $A_\rho$ with $|\rho(f)|\ge \ell$ and $\overline{E_{f,\ell}}$ is the disjoint union of all $A_{\rho'}$ with $|\rho'(f)|< \ell$. Now, consider the individual conditional probabilities $\Pr\left(E_e\cond A_\rho\right)$. If $\rho'(f)\subsetneq\rho(f)$, then a simple coupling argument shows that $\Pr\left(E_e\cond A_{\rho'}\right)<\Pr\left(E_e\cond A_\rho\right)$. But, by symmetry, the probabilities $\Pr\left(E_e\cond A_\rho\right)$ only depend on $|\rho(f)|$.

For every $\rho$ with $|\rho(f)|\ge \ell$ and every $\rho'$ with $|\rho'(f)|< \ell$, we have $|\rho'(f)|<|\rho(f)|$, so by the above discussion $\Pr\left(E_e\cond A_{\rho'}\right)<\Pr\left(E_e\cond A_\rho\right)$. Observe that $\Pr\left(E_e\cond E_{f,\ell}\right)$ is an average of all the $\Pr\left(E_e\cond A_{\rho}\right)$ with $|\rho(f)|\ge \ell$ and, similarly, $\Pr\left(E_e\cond \overline{E_{f,\ell}}\right)$ is an average of all the $\Pr\left(E_e\cond A_{\rho'}\right)$ with $|\rho'(f)|< \ell$, so the desired conclusion follows.
\end{proof}

Now we prove \cref{lem:2-degree-structure}.

\begin{proof}[Proof of \cref{lem:2-degree-structure}]
Let $G_{g}\left(H\right)$ be the graph of pairs of vertices $u,v$
such that $\deg_H\left(u,v\right)>g$. If $G_{g}\left(H\right)$ has
an independent set of size $n-2q$, then we can remove the
(at most $km/\Delta$) vertices with degree greater than $\Delta$
and we are done. Otherwise, $G_{g}\left(H\right)$ has a matching
$M$ of size $q$. Now, consider a uniformly random $r$-cut and
condition on the event that each edge of the matching has its vertices
in two different parts. By \cref{lem:multi-prob-increase}, due to the conditioning, each of the $\Omega\left(gq\right)$
edges of $H$ which contains an edge of $M$ is multicoloured with
probability 
\[
\frac{S\left(k,r\right)r!}{r^{k}}+\Omega\left(1\right)
\]
and the probability that every other edge is multicoloured is unaffected
by the conditioning. Hence, the expected excess of this random $r$-cut is
$\Omega\left(gq\right)$ and so there is a particular $r$-cut with this excess.
\end{proof}

In addition to having few bad vertices, we
would also like to know that a good proportion
of the edges do not involve the bad vertices. For 2-cuts,
this is easy to obtain.

\begin{lemma}
\label{lem:almost-linear-reduction-2}Fix $k\ge2$ and consider an
$n$-vertex, $m$-edge $k$-multigraph $H$. Suppose $U$ is a 
vertex subset with $\left|U\right|=n-b$. Then, at least one of the following holds: there
is a 2-cut with excess $\Omega\left(m/b\right)$ or $e\left(H\left[U\right]\right)=\Omega\left(m\right)$.
\end{lemma}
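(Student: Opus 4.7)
Plan: Write $W = V(H) \setminus U$, so $|W| = b$. I will assume $e(H[U]) < cm$ for a sufficiently small constant $c > 0$ (depending only on $k$, to be specified in a moment) and show that $H$ has a 2-cut with excess $\Omega(m/b)$. The strategy is to split into two cases according to whether many edges $e$ have $|e \cap W| \ge 2$.

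\textit{Case A (many edges with large $W$-intersection).} Suppose at least $m/2^{k+1}$ edges $e$ of $H$ satisfy $|e \cap W| \ge 2$. Since $e(G(H)[W]) = \sum_{e \in E(H)} \binom{|e \cap W|}{2}$ and each such edge contributes at least $1$ to this sum, we get $e(G(H)[W]) \ge m/2^{k+1}$. Now \cref{lem:advanced-m-n} applied with $W' = W$ (so $n' = b$ and $m' = e(G(H)[W]) \ge m/2^{k+1}$) directly yields a 2-cut of $H$ with excess $\Omega(m'/n') = \Omega(m/b)$.

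\textit{Case B (few edges with large $W$-intersection).} Otherwise fewer than $m/2^{k+1}$ edges satisfy $|e \cap W| \ge 2$; in particular, since every edge contained in $W$ has $|e \cap W| = k \ge 2$, we have $e(H[W]) \le m/2^{k+1}$. Consider the deterministic 2-cut that places all of $W$ in one part and all of $U$ in the other. An edge fails to be multicoloured under this cut if and only if it lies entirely in $U$ or entirely in $W$, so the cut size is exactly $m - e(H[U]) - e(H[W])$. Subtracting the random-cut baseline $m(1 - 2^{1-k})$, the excess is
$$m \cdot 2^{1-k} - e(H[U]) - e(H[W]) \;\ge\; m \cdot 2^{1-k} - cm - m/2^{k+1}.$$
Choosing $c = 2^{-k-2}$ makes this at least $m \cdot 2^{-k-1} = \Omega(m)$, which is $\Omega(m/b)$ since $b \ge 1$ (the degenerate case $b = 0$ is vacuous, as then $e(H[U]) = m = \Omega(m)$).

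No step looks like a serious obstacle; the only thing one has to watch is the bookkeeping that relates the ``number of edges with $|e \cap W| \ge 2$'' threshold used in the case split both to the lower bound on $e(G(H)[W])$ needed to feed into \cref{lem:advanced-m-n} in Case A, and to the upper bound on $e(H[W])$ needed to control the loss in Case B. Once the threshold $m/2^{k+1}$ is fixed, everything lines up and the constant $c$ can be chosen at the end to absorb the two loss terms into the main excess term $m \cdot 2^{1-k}$.
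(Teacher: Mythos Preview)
Your proof is correct and follows essentially the same approach as the paper: the deterministic cut $(U,W)$ already has excess $\Omega(m)$ unless a positive fraction of edges are ``concentrated'' in $W$, in which case \cref{lem:advanced-m-n} applied to $W$ gives the $\Omega(m/b)$ excess. The only cosmetic difference is the case split---the paper cases on whether $(1-c)m$ edges cross between $U$ and $W$ (falling back to whichever of $H[U]$ or $H[W]$ has $\Omega(m)$ edges), whereas you case directly on the number of edges with $|e\cap W|\ge 2$, which is arguably a cleaner way to feed into \cref{lem:advanced-m-n}.
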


\begin{proof}
Let $V=V\left(H\right)$. If there are $\left(1-c\right)m$ edges
intersecting both $U$ and $V\setminus U$, for sufficiently small
$c>0$, then the 2-cut with parts $U$ and $V\setminus U$ itself has
excess $\Omega\left(m\right)$. Otherwise, at least one of $U$ or $V\setminus U$
induces $\Omega\left(m\right)$ edges. If $V\setminus U$ induces
$\Omega\left(m\right)$ edges, then $e\left(G\left(H\right)\left[V\setminus U\right]\right)=\Omega\left(m\right)$
as well, so, by \cref{lem:advanced-m-n}, there is a 2-cut with excess
$\Omega\left(m/b\right)$.
\end{proof}

Next, we observe that the case of 3-cuts in 3-graphs
reduces to the case of 2-cuts in 3-graphs.

\begin{lemma}
\label{thm:2-cut-to-3-cut}If a 3-multigraph has a 2-cut with excess
$x$, then it has a 3-cut with excess $\Omega\left(x\right)$.
\end{lemma}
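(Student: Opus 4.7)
The plan is to build a 3-cut directly from the given 2-cut by ``leaking'' a small fraction of vertices from each of the two parts into a common new third part, and then to choose the leak probability so that the expected 3-cut size is maximally boosted by the excess of the 2-cut.

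More precisely, suppose the 2-cut with excess $x$ has parts $(A,B)$, so its size is $3m/4+x$ (since $S(3,2)2!/2^3=3/4$). Let $p\in(0,1)$ be a parameter to be optimised, and consider the random 3-cut with parts $(P_1,P_2,P_3)$ obtained by placing each $v\in A$ independently into $P_1$ with probability $1-p$ and into $P_3$ with probability $p$, and placing each $v\in B$ independently into $P_2$ with probability $1-p$ and into $P_3$ with probability $p$. For each edge $e$ of $H$, write $(a,b)=(|e\cap A|,|e\cap B|)$; the probability that $e$ is multicoloured depends only on $(a,b)$. If $(a,b)\in\{(3,0),(0,3)\}$, then either $P_2$ or $P_1$ is unreachable by the vertices of $e$, so the probability is $0$. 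If $(a,b)=(2,1)$, then we need the $B$-vertex to stay in $P_2$ and the two $A$-vertices to split between $P_1$ and $P_3$, giving probability $(1-p)\cdot 2p(1-p)=2p(1-p)^2$; the case $(a,b)=(1,2)$ is symmetric.

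Hence the expected size of the random 3-cut equals $2p(1-p)^2\cdot\bigl(\text{size of the 2-cut}\bigr)=2p(1-p)^2\bigl(3m/4+x\bigr)$. The function $2p(1-p)^2$ is maximised on $[0,1]$ at $p=1/3$, with value $8/27$, so choosing $p=1/3$ gives expected 3-cut size
\[
\frac{8}{27}\left(\frac{3m}{4}+x\right)=\frac{2m}{9}+\frac{8}{27}x.
\]
Since the expected size of a uniformly random 3-cut of a 3-multigraph is $\bigl(S(3,3)\,3!/3^3\bigr)m=2m/9$, the expected excess of our random 3-cut is $(8/27)x$. Consequently some 3-cut achieves excess at least $\Omega(x)$, as required.

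There is essentially no obstacle here: the only substantive step is choosing the right coupling between the 2-cut and the 3-cut, and optimising $p$; the case analysis on $(a,b)$ is a short calculation, and the comparison with the random-3-cut baseline $2m/9$ falls out immediately.
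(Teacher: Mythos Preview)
Your proof is correct and takes essentially the same approach as the paper: move each vertex of the given 2-cut into a new third part independently with probability $1/3$, and compute that a spanning edge becomes 3-coloured with probability $8/27$. The only difference is that you parametrise by $p$ and optimise, whereas the paper simply sets $p=1/3$ from the start; the resulting calculation and bound $(8/27)x$ are identical.
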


\begin{proof}
Note that $S\left(3,2\right)2!/2^{3}=3/4$ and $S\left(3,3\right)3!/3^{3}=6/27$. Given a 2-cut of size $\left(3/4\right)m+ x$,
create a third part by including each vertex independently with probability
$1/3$. The probability an edge that spans the 2-cut is multicoloured in the 3-cut is $2\left(1/3\right)\left(2/3\right)^{2}=8/27$,
so the expected size of our random 3-cut is $(6/27)m+\Omega(x)$.
\end{proof}

As a consequence, the next lemma immediately follows from \cref{lem:almost-linear-reduction-2}.

\begin{lemma} \label{lem:almost-linear-reduction-3}
Consider an $n$-vertex, $m$-edge 3-multigraph $H$. Suppose $U$ is a 
vertex subset with $\left|U\right|=n-b$. Then, at least one of the following holds: there
is a 3-cut with excess $\Omega\left(m/b\right)$ or $e\left(H\left[U\right]\right)=\Omega\left(m\right)$.
\end{lemma}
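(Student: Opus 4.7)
The plan is to chain together the two previously-established results \cref{lem:almost-linear-reduction-2,thm:2-cut-to-3-cut}. Specifically, I would apply \cref{lem:almost-linear-reduction-2} to $H$ with $k=3$ and the given vertex subset $U$ of size $n-b$. This gives a dichotomy: either $e(H[U]) = \Omega(m)$, in which case we are done immediately and the second alternative of the conclusion holds, or $H$ has a 2-cut of excess $\Omega(m/b)$.

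In the latter case, I would feed this 2-cut into \cref{thm:2-cut-to-3-cut}. That lemma guarantees that any 2-cut of a 3-multigraph with excess $x$ can be converted into a 3-cut with excess $\Omega(x)$ by randomly splitting one of the two parts and computing the expectation (using $S(3,2)2!/2^3 = 3/4$ versus $S(3,3)3!/3^3 = 6/27$, with the gain coming from the extra factor $8/27$ that multicoloured 3-cut edges contribute relative to spanning 2-cut edges). Taking $x = \Omega(m/b)$ then produces a 3-cut of $H$ with excess $\Omega(m/b)$, verifying the first alternative.

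There is essentially no obstacle here: the statement is purely a packaging of the previous two lemmas in the $k=3$ case, with the sole content being the identification that a large-excess 2-cut in a 3-multigraph can be promoted to a large-excess 3-cut. The only care needed is to note that the implicit constants in the two applications of $\Omega(\cdot)$ multiply to a single implicit constant, which is fine since $k=3$ is fixed throughout.
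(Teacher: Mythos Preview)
Your proposal is correct and matches the paper's approach exactly: the paper states that \cref{lem:almost-linear-reduction-3} ``immediately follows from \cref{lem:almost-linear-reduction-2}'' in light of \cref{thm:2-cut-to-3-cut}, which is precisely the chaining you describe.
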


For general $r$-cuts in $k$-graphs, it is not clear how to reduce
to the case where $H\left[U\right]$ has many edges, but we can reduce
instead to the case where there are many edges of $H$ which have all but
at most one of their vertices in $U$.

\begin{lemma}
\label{lem:almost-linear-reduction-k}Fix $2\le r\le k$ and consider
an $n$-vertex, $m$-edge $k$-multigraph $H$. Suppose $U$ is a 
vertex subset with $\left|U\right|=n-b$. Then, at least one of the following holds: there
is an $r$-cut with excess $\Omega\left(m/b\right)$ or there are
$\Omega\left(m\right)$ edges which have at least $k-1$ of their
vertices in $U$.
\end{lemma}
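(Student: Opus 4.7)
The plan is to mimic the proof of \cref{lem:almost-linear-reduction-2}, but now, since we cannot easily reduce to the case where $H[U]$ has many edges, we instead work with the complementary ``bad'' set $B = V(H) \setminus U$, which has size $b$, and exploit \cref{lem:advanced-m-n} applied to $W = B$.

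Suppose that fewer than a sufficiently small constant fraction $c$ of the edges of $H$ have at least $k-1$ of their vertices in $U$ (otherwise, we immediately have the second alternative). Then at least $(1-c)m$ edges of $H$ have at most $k-2$ of their vertices in $U$, which is to say, at least $2$ of their vertices in $B$. Each such edge $e$ contributes at least $\binom{|e \cap B|}{2} \ge 1$ edges to $G(H)[B]$, since $G(H)$ replaces each edge by a clique on its vertex set. Hence $G(H)[B]$ contains $\Omega(m)$ edges.

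Now apply \cref{lem:advanced-m-n} with $W = B$, $n' = b$, and $m' = \Omega(m)$ to obtain an $r$-cut of $H$ with excess $\Omega(m/b)$, which is the first alternative in the statement. There is no serious obstacle: the content of the lemma is simply that if most edges fail to be ``nearly contained'' in $U$, then they automatically concentrate onto the small set $B$, and the already-established \cref{lem:advanced-m-n} converts this concentration into cut excess.
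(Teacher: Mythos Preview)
Your proof is correct and follows essentially the same approach as the paper's. The paper phrases the dichotomy the other way around---first checking whether $e(G(H)[V\setminus U])=\Omega(m)$ and applying \cref{lem:advanced-m-n} if so, otherwise concluding that almost all edges have at most one vertex outside $U$---but the underlying argument is identical.
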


\begin{proof}
Let $V = V(H)$. If $e\left(G\left(H\right)[V \setminus U]\right)=\Omega\left(m\right)$,
then, by \cref{lem:advanced-m-n}, there is an $r$-cut with excess $\Omega\left(m/b\right)$.
Otherwise, almost all of the $m$ edges have at most one of their
vertices in $V\setminus U$.
\end{proof}

To summarise, it only remains to treat the case where $H$ contains a large
subset $U$ of vertices with the property that many edges are almost completely 
contained in $U$ and the degrees and 2-degrees in $U$ are not too high.

\subsection{Reducing to some special values of $k$ and $r$\label{subsec:special-reduction}}

Recall that in the proof of \cref{thm:excess-n}, we were able to reduce a general $r$-cut problem to a 2-cut problem by making a choice for the vertices in parts $3,\dots,r$ and then passing to an auxiliary mixed $\left(k-r+2\right)$-multigraph $\Hpart{}$. We will use the same idea here, but since 2-graphs and 3-graphs might only have small 2-cuts, it will only be useful when $r\le k-2$. To be more specific, we will use this idea to deduce the following lemma from another result.

\begin{lemma}
\label{lem:almost-linear-k}Fix $2\le r\le k-2$ and consider an $m$-edge,
$n$-vertex $k$-multigraph $H$ with a distinguished vertex subset
$U$. Suppose that $H$ has $\Omega\left(m\right)$ edges which have
at least four of their vertices in $U$,
$\deg_H\left(u\right)\le\Delta$ for all $u\in U$, and $\deg_H\left(u,v\right)\le g$
for all distinct $u,v\in U$. Let $p=\Delta^{-3/5}\land\left(g^{-2/3}\Delta^{-1/3}\right)$
and suppose $g=o(pm)$ and $g\log n=o(p\Delta)$.
Then $H$ has an $r$-cut with excess $\Omega\left(\sqrt{p}m/\sqrt{\Delta}\right)$.
\end{lemma}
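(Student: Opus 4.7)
The plan is to reduce \cref{lem:almost-linear-k} to its 2-cut analogue \cref{lem:almost-linear-2} (to be proved in \cref{subsec:2-cuts-almost-linear}) by a partial-exposure argument directly analogous to the one used in the proof of the $r > 2$ case of \cref{thm:excess-n}. Once we randomly pre-assign some vertices to the higher-indexed parts $3, \dots, r$, the remaining problem is to find a good 2-cut in an auxiliary mixed multihypergraph that inherits the sparsity conditions from $H$.

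More precisely, I would independently assign each vertex $v \in V(H)$ to be ``unassigned'' ($\rho(v) = *$) with probability $2/r$, and to each part $j \in \{3, \dots, r\}$ with probability $1/r$, producing a random assignment $\rho: V(H) \to \{*, 3, \dots, r\}$. Following the notation of \cref{thm:excess-n}, let $\Hpart{\rho}$ be the mixed $(k-r+2)$-multigraph on $V' := \rho^{-1}(*)$ whose edge multiset is
\[
\bigl\{e \cap V' : e \in E(H),\ \rho(e) \supseteq \{3, \dots, r\},\ |e \cap V'| \ge 2\bigr\}.
\]
A 2-cut of $\Hpart{\rho}$ of size $z$ extends $\rho$ to an $r$-cut of $H$ of the same size. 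Moreover, because each vertex of $H$ ends up independently and uniformly in each of the $r$ parts when $\rho$ is drawn as above and is followed by a uniformly random 2-cut, the joint procedure is a uniformly random $r$-cut of $H$. Hence if $X_\rho$ denotes the excess of a maximum 2-cut of $\Hpart{\rho}$, then $\E_\rho X_\rho$ is a lower bound on the maximum excess of an $r$-cut of $H$.

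The main verification is that with constant probability, $\Hpart{\rho}$ and the distinguished set $U' := U \cap V'$ satisfy the hypotheses of \cref{lem:almost-linear-2} with the same parameters $\Delta$, $g$, and $p$. The degree and 2-degree bounds $\deg_{\Hpart{\rho}}(u) \le \Delta$ and $\deg_{\Hpart{\rho}}(u,v) \le g$ for $u, v \in U'$ hold deterministically, since any edge of $\Hpart{\rho}$ through $u$ (or $\{u,v\}$) comes from an edge of $H$ through $u$ (or $\{u,v\}$); likewise the quantitative conditions $g = o(pm)$ and $g \log n = o(p\Delta)$ transfer since $\Hpart{\rho}$ has at most $m$ edges on at most $n$ vertices. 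For the edge-count hypothesis, observe that for each $e \in E(H)$ with at least four vertices in $U$, there is at least a constant probability (depending only on $k$ and $r$) that exactly $r - 2$ of $e$'s vertices are assigned to $\{3, \dots, r\}$ covering all of these parts, while at least four of the remaining $k - r + 2 \ge 4$ unassigned vertices lie in $U$. Such a configuration is realisable since $r \le k - 2$ guarantees $|e \setminus U| \ge r - 2$ even when $|e \cap U|$ is as small as $4$, so the $r-2$ assigned vertices can be chosen from $e \setminus U$. Each such event contributes an edge of $\Hpart{\rho}$ of maximum size $k - r + 2$ with at least four vertices in $U'$, and linearity of expectation together with Markov's inequality yields $\Omega(m)$ such ``good'' edges with constant probability.

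Combining these verifications, \cref{lem:almost-linear-2} applied to $\Hpart{\rho}$ gives $X_\rho = \Omega(\sqrt{p}\, m / \sqrt{\Delta})$ with constant probability, hence $\E_\rho X_\rho = \Omega(\sqrt{p}\, m / \sqrt{\Delta})$, which is the desired $r$-cut excess for $H$. The main obstacle is really only the bookkeeping needed to verify that the sparsity and edge-count conditions survive the random partial assignment; all the deeper content of the proof of \cref{thm:nontrivial} is concentrated in \cref{lem:almost-linear-2}, which actually constructs the large 2-cut in the sparse setting.
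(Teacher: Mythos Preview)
Your approach is the same as the paper's: reduce to \cref{lem:almost-linear-2} via the partial-exposure operator $P$ from the proof of \cref{thm:excess-n}, and verify that the auxiliary hypergraph $\Hpart{\rho}$ inherits enough structure with positive probability. The overall strategy is sound, but there are two slips in your verification of the edge-count hypothesis that need repair.

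First, \cref{lem:almost-linear-2} requires that $\Hpart{\rho}[U']$ contain $\Omega(e(\Hpart{\rho}))$ edges of size at least four, meaning edges \emph{entirely} contained in $U'$. Your configuration only guarantees that the resulting edge $e\cap V'$ has at least four vertices in $U'$; if some unassigned vertex of $e$ lies outside $U$, the edge is not in $\Hpart{\rho}[U']$ and does not count. Second, your realisability argument is backwards: you write ``$|e\setminus U|\ge r-2$ even when $|e\cap U|$ is as small as $4$'', but the problematic direction is when $|e\cap U|$ is \emph{large} (for instance $|e\cap U|=k$, forcing $|e\setminus U|=0$), in which case the $r-2$ assigned vertices cannot be chosen from $e\setminus U$ at all.

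Both issues are fixed by the same adjustment. Given $|e\cap U|\ge4$, fix four specific vertices of $e\cap U$ and consider the event that exactly these four are unassigned while all remaining $k-4\ge r-2$ vertices of $e$ are assigned to parts in $\{3,\dots,r\}$, covering every such part. This event has probability $\Omega(1)$ and produces a size-$4$ edge of $\Hpart{\rho}$ lying entirely inside $U'$, as required. (Note also that the transfer of the condition $g=o(pm)$ needs $e(\Hpart{\rho})=\Omega(m)$, not merely $e(\Hpart{\rho})\le m$; this follows once you have established the $\Omega(m)$ good edges.)
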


\cref{lem:almost-linear-k} basically says that almost-linear $k$-graphs have large-excess $r$-cuts. Combined with the results of \cref{subsec:almost-linear-reduction}, this will suffice to prove the case $r\le k-2$ of \cref{thm:nontrivial}. The details will be given at the end of this subsection. As promised, we will now use the $\Hpart{}$ trick to show that \cref{lem:almost-linear-k} is a consequence of the following 2-cut lemma.

\begin{lemma}
\label{lem:almost-linear-2}Fix $k\ge4$ and consider an $m$-edge,
$n$-vertex mixed $k$-multigraph $H$ with a distinguished vertex subset
$U$. Suppose that $H\left[U\right]$ has $\Omega\left(m\right)$
edges of size at least four, $\deg_H\left(u\right)\le\Delta$ for all $u\in U$, and
$\deg_H\left(u,v\right)\le g$ for all distinct $u,v\in U$. Let $p=\Delta^{-3/5}\land\left(g^{-2/3}\Delta^{-1/3}\right)$
and suppose $g=o(pm)$ and $g\log n=o(p\Delta)$.
Then $H$ has a 2-cut with excess $\Omega\left(\sqrt{p}m/\sqrt{\Delta}\right)$.
\end{lemma}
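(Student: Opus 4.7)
I would follow the strategy outlined in the introduction: perform a partial random reveal to convert the task into an auxiliary $2$-cut problem on a random mixed sub-hypergraph of $H$, then exploit the degree hypotheses via a local greedy argument. The first step is to form $S\subseteq U$ by including each vertex independently with probability $p$, and colour each vertex of $V(H)\setminus S$ uniformly at random with some $c_0(v)\in\{1,2\}$. Call an edge $e\in E(H)$ \emph{pending} if $e\cap S\neq\emptyset$ and $e\setminus S$ is monochromatic under $c_0$ (or $e\subseteq S$); for pending $e$ with $e\not\subseteq S$, let $c_e\in\{1,2\}$ be the common colour of $e\setminus S$. Once $S$ is coloured, the excess of the $2$-cut above the fully-random baseline comes entirely from making pending edges multicoloured, i.e.\ ensuring that $e\cap S$ contains a vertex not of colour $c_e$. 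The key objects are the \emph{unit constraints}: pending edges $e\in E(H[U])$ of size at least $4$ with $|e\cap S|=1$, each forcing its unique $S$-vertex to prefer the colour $3-c_e$.

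The next step is to use Chernoff-type concentration together with the hypotheses $\deg_H(u)\le\Delta$, $\deg_H(u,v)\le g$, $g\log n=o(p\Delta)$ and $g=o(pm)$ to show that, with positive probability over $(S,c_0)$, the auxiliary structure is typical in the following sense: the total number of unit constraints is $\Theta(pm)$ (using the $\Omega(m)$ size-$\ge 4$ edges of $H[U]$); each vertex $v\in S$ carries $d_v=O(p\Delta)$ unit constraints; the contribution of pending edges with $|e\cap S|\ge 2$ is negligible (controlled via $g=o(pm)$); and the random labels $(c_e)$ of the unit constraints at a common vertex are sufficiently near-independent (their pairwise correlations being bounded via the $2$-degree bound $g$) that for each $v\in S$,
\[
\mathbb{E}_{c_0}\bigl[|a_v-b_v|\,\bigm|\,d_v\bigr]=\Theta\bigl(\sqrt{d_v}\bigr),
\]
where $a_v$ and $b_v$ count the unit constraints at $v$ demanding colour $1$ or $2$, respectively.

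Assuming this typical event, colour each $v\in S$ with the majority-demanded colour (vertices of $S$ with no unit constraint receive either colour). The gain over the random $c_S$-baseline at $v$ is $\tfrac12|a_v-b_v|$, and the higher-arity pending constraints and settled edges contribute only their baselines in expectation. Summing vertexwise and using the pointwise bound $\sqrt{d_v}\ge d_v/\sqrt{\max_w d_w}$,
\[
\sum_{v\in S}\sqrt{d_v}\;\ge\;\frac{\sum_v d_v}{\sqrt{\max_v d_v}}\;=\;\Omega\!\left(\frac{pm}{\sqrt{p\Delta}}\right)\;=\;\Omega\!\left(\frac{\sqrt{p}\,m}{\sqrt{\Delta}}\right),
\]
so an averaging argument over $(S,c_0)$ produces a $2$-cut of $H$ with the required excess.

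The main obstacle is making the near-independence claim for $(c_e)_e$ at a common vertex rigorous. Two unit constraints $e_1,e_2$ at $v$ have correlated labels $c_{e_1},c_{e_2}$ whenever $e_1$ and $e_2$ share a further vertex $u$, and a priori there can be up to $(k-1)g$ such edge-pairs per vertex. This is exactly why the introduction hints at decomposing $S$ into ``dependency-sparse'' random subsets $T_1,\dots,T_t$, running the greedy argument within each $T_i$, and combining randomly across them: the partition localises the dependencies and makes the conditional independence claims cleaner, while any cross-subset pending edges can be absorbed into the random baseline. The threshold $p=\Delta^{-3/5}\wedge(g^{-2/3}\Delta^{-1/3})$ is tuned precisely so that both $g\log n=o(p\Delta)$ and $g=o(pm)$ hold with $p$ as large as possible, which in turn makes the resulting bound $\sqrt{p}\,m/\sqrt\Delta$ as strong as possible.
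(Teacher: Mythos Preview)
Your overall strategy resembles the paper's in spirit (randomly reveal part of a cut, then run a greedy local argument), but there is a concrete error that breaks the quantitative bound, and a second gap that you yourself flag but do not actually resolve.

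\medskip
\textbf{The degree claim $d_v=O(p\Delta)$ is false.}
Fix $v\in S$. A unit constraint at $v$ is an edge $e\in E(H[U])$ of size at least four with $e\cap S=\{v\}$ and $e\setminus S$ monochromatic. Since $p$ is small, for each edge $e$ through $v$ the event $e\cap S=\{v\}$ has probability $(1-p)^{|e|-1}=1-O(p)$, and given that event the probability that $e\setminus\{v\}$ is monochromatic is $2^{2-|e|}=\Theta(1)$. Hence $\E[d_v\mid v\in S]=\Theta(\deg_{H[U]}^{(\ge4)}(v))$, which can be as large as $\Theta(\Delta)$, not $O(p\Delta)$. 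Plugging $\max_v d_v=\Theta(\Delta)$ into your final inequality gives only
\[
\sum_{v\in S}\sqrt{d_v}\;\ge\;\frac{\sum_v d_v}{\sqrt{\max_v d_v}}\;=\;\Omega\!\left(\frac{pm}{\sqrt\Delta}\right),
\]
off by a factor $\sqrt p$ from the target $\sqrt p\,m/\sqrt\Delta$. The paper obtains the genuine degree reduction $\Delta'=O(p\Delta)$ not by sampling $S$, but via a random \emph{partition} $V_1,\dots,V_t$ of the whole vertex set into parts of density $\Theta(p)$, so that the degree of $G(H)[V_i]$ (edges with \emph{two} endpoints in $V_i$) is $O(p\Delta)$ by Bernstein together with $\deg_H(u,v)\le g$ and $g\log n=o(p\Delta)$. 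In your scheme the randomness of $S$ gives no such reduction, because the edges contributing to $d_v$ are those lying almost entirely \emph{outside} $S$.

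\medskip
\textbf{The anti-concentration is not established.}
Even granting the correct $d_v$, the step $\E_{c_0}[\,|a_v-b_v|\mid d_v]=\Theta(\sqrt{d_v})$ needs more than bounded pairwise correlations: controlling $\Var(a_v-b_v)$ is not enough to force $|a_v-b_v|$ to be large with positive probability. One needs a fourth-moment (kurtosis) bound so that Paley--Zygmund applies, and the variables $c_e$ sharing vertices outside $S$ can interact in higher-order ways. The paper handles exactly this issue: it passes to edges with two vertices in a small part $V_i'$, packages the contributions into random weights $\eta_{u,v}$, and uses the Bonami lemma to bound the kurtosis of each $\eta_{u,v}$ (since $\eta_{u,v}$ is a bounded-degree polynomial in Rademacher variables). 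Your closing paragraph gestures toward partitioning $S$ into dependency-sparse subsets, but this is precisely the substantial machinery (\cref{lem:exists-good}, \cref{lem:combine}, \cref{lem:weighted-reduction}, \cref{lem:random-cut}, \cref{lem:bonami}) that constitutes the proof; without it the argument is not complete.
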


\begin{proof}[Proof of \cref{lem:almost-linear-k} given \cref{lem:almost-linear-2}]
We will use
almost exactly the same approach as in the proof of \cref{thm:excess-n}. Recall the definition of the operator $P$ and the mixed $\left(k-r+2\right)$-multigraphs $\Hpart{\rho}$. (Informally, we remind the reader that $P$ restricts the information of whether a vertex is in part 1 or 2: if $\omega(v)\in\{1,2\}$ then $P\omega(v)=*$. The multihypergraph $\Hpart{\rho}$ on the vertex set $\rho^{-1}(*)$ is defined such that a 2-cut of $\Hpart{\rho}$ corresponds to an $r$-cut of $H$ with the same size.)

Let $\omega$ be a uniformly random $r$-cut of $H$ and let $Z$ be its size. Let $Q$ be the number of edges in $\Hpart{P\omega}[U]$ of size at least four. By \cref{lem:almost-linear-2}, there is $c=\Omega(\sqrt{p/\Delta})$ such that for each outcome of $P\omega$, the multihypergraph $\Hpart{P\omega}$ has a 2-cut of size at least $\E\left[Z\cond P\omega\right]+cQ$ (and therefore $H$ has an $r$-cut of this size).

Now, the probability that an edge in $H$ with at least four vertices in $U$ corresponds to a size-4 edge in $\Hpart{P\omega}[U]$ is $\Omega\left(1\right)$, so $\E Q=\Omega\left(m\right)$ and $\Hpart{P\omega}$ has a 2-cut of size at least
$$\E \left[\E\left[Z\cond P\omega\right]+cQ\right]=\E Z+\Omega(cm).$$
This corresponds
to an $r$-cut of $H$ which has the same size and, therefore, has excess $\Omega\left(cm\right)=\Omega\left(\sqrt{p}m/\sqrt{\Delta}\right)$.
\end{proof}

We next use a slight variation of the $\Hpart{}$ trick to directly show that the $r\ge k-1$ case of \cref{thm:nontrivial} follows from the $r=k=3$ case.

\begin{proof}[Proof of the $r\ge k-1$ case of \cref{thm:nontrivial} given the $r=k=3$ case]
First, consider the case where $k=r\ge 3$. In this case we redefine $P$ and $\Hpart{\rho}$ as follows. For
any $\omega:V\to\left\{ 1,\dots,k\right\} $, let $P\left(\omega\right)=P\omega$
be the function $V\to\left\{ *,4,\dots,k\right\} $ with
\begin{align*}
P\omega\left(v\right) & =\begin{cases}
* & \text{if }\omega\left(v\right)\in\left\{ 1,2,3\right\} ,\\
\omega(v) & \text{otherwise}.
\end{cases}
\end{align*}
For $\rho:V\to\left\{ *,4,\dots,k\right\}$, let $\Hpart{\rho}$ be the 3-multigraph
with edge multiset
\[
\left\{ e\cap\rho^{-1}\left(*\right): e \in E(H), \rho\left(e\right)\supseteq\left\{ 4,\dots,k\right\} ,\,\left|e\cap\rho^{-1}\left(*\right)\right|=3\right\}
\]
and vertex set $\rho^{-1}\left(*\right)$. Note that a 3-cut
of $\Hpart{\rho}$ with size $z$ corresponds to an $r$-cut $\omega$
of $H$ with the same size $z$ satisfying $P\omega=\rho$.

We now proceed in essentially the same way as in the proofs of \cref{thm:excess-n,lem:almost-linear-k}. Let $\omega$ be a uniformly random $k$-cut of $H$, let $Z$ be its size, and let $Q=e(\Hpart{P\omega})$. By assumption, there is $c=\Omega(1)$ such that $\Hpart{P\omega}$ has a 3-cut with size $\E\left[Z\cond P\omega\right]+cQ^{5/9}$.

The probability that an edge in $H$ corresponds to an edge in $\Hpart{P\omega}$ is $(k!/k^k)/(3!/3^3)=\Omega\left(1\right)$, so $\E Q=\Omega\left(m\right)$ and, applying Markov's inequality to $m-Q$, we have $$\Pr\left(Q<\E Q/2\right)=\Pr\left(m-Q>(1+\Omega(1))\E [m-Q]\right)=1-\Omega\left(1\right).$$
This implies that $\E Q^{5/9}\ge\Pr\left(Q\ge\E Q/2\right)(\E Q/2)^{5/9}=\Omega(m^{5/9})$.
So, $\Hpart{P\omega}$ has a 2-cut of size at least
$$\E \left[\E\left[Z\cond P\omega\right]+cQ^{5/9}\right]=\E Z+\Omega(m^{5/9}).$$
This corresponds
to an $r$-cut of $H$ with the same size and, therefore, with excess $\Omega\left(m^{5/9}\right)$, as desired.

Finally, it remains to consider the case where $r=k-1$, for $k\ge4$.
In this case we observe that our $k$-uniform $r$-cut problem corresponds
to an $r$-multigraph $r$-cut problem in much the same way that 3-graph 2-cut problems correspond to multigraph cut problems (recall that this was important for \cref{thm:3-2}). Let $H'$ be the $km$-edge $r$-multigraph obtained from $H$
by taking each $k$-edge and replacing it with an $r$-graph $k$-clique
on its vertex set. Now, there is essentially only one way that an edge of $H$ can be multicoloured (with a single repeated part), so an $r$-cut of $H'$ with size $z$ corresponds
precisely to an $r$-cut of $H$ with size $z/2$. Since we have already
proved that $\Omega\left(m\right)$-edge $r$-multigraphs have $r$-cuts
with excess $\Omega\left(m^{5/9}\right)$, the desired result follows.
\end{proof}

To prove the $r=k=3$ case of \cref{thm:nontrivial}, we will combine the lemmas in \cref{subsec:almost-linear-reduction} with the following result.

\begin{lemma}
\label{lem:almost-linear-3}Consider an $m$-edge, $n$-vertex 3-multigraph
$H$ with a distinguished vertex subset $U$. Suppose that $H\left[U\right]$
has $\Omega\left(m\right)$ edges, $\deg_H\left(u\right)\le\Delta$
for all $u\in U$, and $\deg_H\left(u,v\right)\le g$ for all distinct $u,v\in U$.
Let $p=\Delta^{-3/5}\land\left(g^{-2/3}\Delta^{-1/3}\right)$ and
suppose $g=o(pm)$ and $g\log n=o(p\Delta)$. Then $H$ has a 3-cut
with excess $\Omega\left(\sqrt{p}m/\sqrt{\Delta}\right)$.
\end{lemma}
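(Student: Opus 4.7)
The plan is to follow the strategy sketched in the introduction: randomly pre-assign some vertices of $U$ to part $3$, reducing to a 2-cut problem on an auxiliary random multigraph, and then find a large 2-cut of that multigraph via a block-partition argument. Specifically, I would mark each vertex of $U$ independently with probability $p$, letting $M$ denote the marked set and $W=U\setminus M$. Vertices of $M$ are put into part $3$; vertices of $V(H)\setminus U$ are coloured uniformly at random in $\{1,2,3\}$; and vertices of $W$ are coloured $1$ or $2$ via some $\chi\colon W\to\{1,2\}$ to be chosen. Define the random multigraph $H'$ on $W$ where the multiplicity of $\{u,v\}$ equals $|\{w\in M:\{u,v,w\}\in E(H[U])\}|$. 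The multicoloured edges of $H[U]$ are then precisely the multi-edges of $H'$ whose two endpoints $\chi$ puts in different parts, while edges of $H$ with $|e\cap U|\le 1$ contribute the uniform baseline $2/9$ in expectation. (Edges with $|e\cap U|=2$ introduce an extra weighted contribution that can be folded into the same 2-cut optimisation.) So the problem reduces to finding, in expectation over $M$, a 2-cut of $H'$ with large excess.

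Next, I would use Chernoff/Bernstein concentration, together with the hypotheses $g=o(pm)$ and $g\log n=o(p\Delta)$, to show that with high probability $H'$ satisfies: $e(H')=\Theta(pm)$; the maximum degree is $O(p\Delta)$; and the dependency structure is sparse, meaning that most pairs of $H'$ have multiplicity $1$ and, for a typical small subset of $W$, no two distinct $H'$-pairs in it share a common marked neighbour in $M$. I then randomly partition $W$ into $t$ blocks $B_1,\dots,B_t$ of size $s\approx|W|/t$, tuning $p,s,t$ so that inside each block $H'[B_i]$ behaves like an Erd\H os--R\'enyi random graph with $e(H'[B_i])$ edges on $s$ vertices. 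In each block I would find a 2-cut $\chi_i$ of $H'[B_i]$ with excess $\Omega(\sqrt{s\cdot e(H'[B_i])})$ (the random-graph excess), and combine the $\chi_i$ into a global $\chi$ by independently flipping each block; between-block edges then cancel in expectation while within-block excesses add. The total expected excess is $\sum_i\Omega(\sqrt{s\cdot e(H'[B_i])})=\Omega(\sqrt{st\cdot e(H')})=\Omega(\sqrt{|W|\cdot pm})$. Using $|U|\ge 3e(H[U])/\Delta=\Omega(m/\Delta)$ and hence $|W|=\Omega(m/\Delta)$, this is $\Omega(\sqrt{pm^{2}/\Delta})=\Omega(\sqrt{p}\,m/\sqrt{\Delta})$, as required.

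The main obstacle is the random-graph-like local bound: rigorously deducing that $H'[B_i]$ admits a 2-cut of excess $\Omega(\sqrt{s\cdot e(H'[B_i])})$. This requires carefully controlling the number of pairs of distinct $H'$-pairs in $B_i$ that share a common marked neighbour in $M$, using the bounds $\deg_H(\cdot)\le\Delta$ and $\deg_H(\cdot,\cdot)\le g$. The specific choice $p=\Delta^{-3/5}\land g^{-2/3}\Delta^{-1/3}$ comes from balancing the constraint $pg\ll 1$ (making multi-edges rare inside a block) against the dependency-density constraint (involving $p^{2}\Delta^{2}$-type terms) and choosing $s$ accordingly; this optimisation is ultimately what yields the exponent $5/9$ in \cref{thm:nontrivial}.
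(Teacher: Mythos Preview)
There is a fatal gap in your reduction. By marking vertices of $U$ with probability $p=o(1)$ and forcing all of $W=U\setminus M$ into parts $\{1,2\}$, you construct an extremely imbalanced $3$-cut: an edge of $H[U]$ can be multicoloured only when exactly one of its three vertices lies in $M$, so at most $e(H')\approx 3p\cdot e(H[U])=o(m)$ edges of $H[U]$ are multicoloured under \emph{any} choice of $\chi$. Since the baseline is $\tfrac{2}{9}m$ and $H[U]$ contains $\Omega(m)$ edges, your cut carries excess $-\Omega(m)$ on those edges alone, and the $2$-cut surplus $\Omega(\sqrt p\,m/\sqrt\Delta)$ you extract from $H'$ is far too small to compensate. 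Put differently, the sentence ``the problem reduces to finding a $2$-cut of $H'$ with large excess'' is false: large excess over $e(H')/2\approx\tfrac32 p\,e(H[U])$ is not large excess over $\tfrac29 m$.

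The paper's argument differs precisely here. It takes $\omega$ to be a \emph{uniformly random} $3$-cut, so each vertex enters part $3$ with probability $1/3$ (not $p$), automatically matching the baseline in expectation; one then optimises the choice between parts $1$ and $2$ given $P\omega$. The small parameter $p$ enters not as a marking probability but as the density of a \emph{fixed} good partition $\mathcal V=(V_1,\dots,V_t)$ of $U$ into $t\approx 1/p$ parts (\cref{lem:exists-good-3}), chosen \emph{before} any randomness is revealed. Properties (iii) and (iv) of goodness then ensure that, conditionally on $V_i'=\{v\in V_i:\omega(v)\in\{1,2\}\}$, the edges of $G^{\mathrm{part}}_{P\omega}[V_i']$ are essentially independent (each present with probability $1/3$), so \cref{lem:random-subgraph-cut} gives per-block excess $\Omega\bigl(e(G(H[U])[V_i'])/\sqrt{\Delta'}\bigr)$ with $\Delta'=O(p\Delta)$; summing and combining via \cref{lem:combine-cuts} yields $\Omega(pm/\sqrt{p\Delta})=\Omega(\sqrt p\, m/\sqrt\Delta)$. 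In short, you have swapped the roles of the part-$3$ probability (which must be $\Theta(1)$ to meet the baseline) and the block-partition density (which is $p$); this is not a cosmetic difference but the crux of the argument.
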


We now show how to deduce the $r=k=3$ case and the $r\le k-2$ case of \cref{thm:nontrivial} from \cref{lem:almost-linear-3,lem:almost-linear-k} and the lemmas in \cref{subsec:almost-linear-reduction}.

\begin{proof}[Proof of \cref{thm:nontrivial} given \cref{lem:almost-linear-3,lem:almost-linear-k}]

We can assume $H$ has no isolated vertices (otherwise we could delete them without changing the max-cut or the number of edges). This means $n=O(m)$. Define $\Delta=m^{5/9}$, $g=m^{3/5}\Delta^{-4/5}=m^{7/45}$
and $q=\sqrt{m/g}=m^{19/45}$.

Note that $m^{5/9}=o(gq)$, so by \cref{lem:2-degree-structure}
it suffices to consider the case where there is a set $U$ with $\left|U\right|\ge n-O\left(q+m/\Delta\right)$
such that, for all distinct $u,v\in U$, $\deg\left(v\right)\le\Delta$ and $\deg\left(u,v\right)\le g$.

If $r=k=3$, we may now apply \cref{lem:almost-linear-reduction-3}.
If $H$ has a 3-cut with excess $\Omega\left(m/\left(q+m/\Delta\right)\right)=\Omega\left(\left(m/q\right)\land\Delta\right)=\Omega\left(m^{5/9}\right)$,
then we are done. Otherwise, $e\left(H\left[U\right]\right)=\Omega\left(m\right)$.
In this case, let $p=\Delta^{-3/5}\land\left(g^{-2/3}\Delta^{-1/3}\right)=m^{-1/3}$. Note that $g= m^{7/45} =o( pm)$ and
$p\Delta=m^{2/9}$, so $g\log n=o(p\Delta)$.
We may therefore apply \cref{lem:almost-linear-3},
from which it follows that $H$ has a 3-cut with excess $\Omega\left(\sqrt{p}m/\sqrt{\Delta}\right)=\Omega\left(m^{5/9}\right)$.

If $r\le k-2$, we may proceed in basically the same way. If $r=2$ (therefore $k\ge 4$), apply \cref{lem:almost-linear-reduction-2}: if $H$ has a 3-cut with excess $\Omega\left(m/\left(q+m/\Delta\right)\right)$, we are done, so we may assume $e\left(H\left[U\right]\right)=\Omega\left(m\right)$. Otherwise, if $r\ge3$ (therefore $k\ge 5$), apply \cref{lem:almost-linear-reduction-k}: if $H$ has an $r$-cut with excess $\Omega\left(m/\left(q+m/\Delta\right)\right)$, we are done, so we may assume $H$ has $\Omega(m)$ edges with at least $k-1\ge 4$ of their vertices in $U$. In either case, we may then apply \cref{lem:almost-linear-k} with $p=\Delta^{-3/5}\land\left(g^{-2/3}\Delta^{-1/3}\right)=m^{-1/3}$ to prove that $H$ has an $r$-cut with excess $\Omega\left(\sqrt{p}m/\sqrt{\Delta}\right)=\Omega\left(m^{5/9}\right)$.
\end{proof}

It remains to prove \cref{lem:almost-linear-3,lem:almost-linear-2}.

\section{3-cuts in almost-linear 3-multigraphs\label{subsec:3-cuts-almost-linear}}

In this section, we prove \cref{lem:almost-linear-3}. The idea is to consider a uniformly random 3-cut but to expose only the
information about whether each vertex is in part 3 or not, meaning that
for vertices which are not chosen to be in part 3, we are free
to choose whether they should be in part 1 or part 2. (This is exactly
as in the proofs of \cref{thm:excess-n,lem:almost-linear-k}.) We thereby reduce the problem
of finding a large-excess 3-cut in $H$ to the problem of finding
a large 2-cut in the random multigraph $\Gpart{}$ obtained by considering
each hyperedge of $H$ which has exactly one vertex in part 3, deleting
that vertex, and adding the resulting pair of vertices as an edge
in $\Gpart{}$. 

Now, we expect that in a random multigraph we can find a large-excess 2-cut
in a greedy fashion: we simply go through the vertices one-by-one
and place each vertex in the part that maximizes the number of multicoloured
edges between that vertex and the preceding vertices. Doing this will
result in at least half the edges being multicoloured, with equality
only if at each step both choices of part for the current
vertex are equally good. Since we have some variation due to randomness,
it's unlikely that both choices will be equally good at most steps.

We cannot immediately use this method to find a large-excess cut of
the random multigraph $\Gpart{}$, because the edges of $\Gpart{}$ are
not completely independent. In particular, if two edges of $H$ intersect,
then the edges of $\Gpart{}$ that can arise from $H$ depend on each
other. But since we are assuming $H$ is almost linear, there are
not too many of these dependencies, so if we look
at the subgraph of $\Gpart{}$ induced by a small random set of vertices,
we expect most of the edges in this induced subgraph to be independent.
Therefore, we can find a large-excess 2-cut in many small induced subgraphs
of $\Gpart{}$ and then randomly combine these 2-cuts to obtain a large-excess
2-cut of $\Gpart{}$ itself.

In preparation for proving \cref{lem:almost-linear-3}, we need
a few lemmas. First, we will need to be able to split our
vertex set into many small subsets such that the subgraph
of $\Gpart{}$ induced by each of these vertex sets has many edges
but few dependencies.

\begin{definition}
\label{def:good-3}Consider a 3-multigraph $H$ with a distinguished vertex subset $U$. Consider a partition $\cV=\left(V_{1},\dots,V_{t}\right)$
of $U$. We say that $\cV$ is \emph{$\left(m',\Delta'\right)$-good}
with respect to $\left(H,U\right)$ if: 
\begin{enumerate}
\item[(i)] $\bigcup_{i}G\left(H[U]\right)\left[V_{i}\right]$ has
at least $m'$ edges; 
\item[(ii)] $\bigcup_{i}G\left(H\right)\left[V_{i}\right]$ has maximum degree
at most $\Delta'$; 
\item[(iii)] no edge of $H$ is completely contained in any $V_i$;
\item[(iv)] for each $i$, any two edges $e_1,e_2\in E(H)$ that both intersect $V_i$ in two vertices must satisfy $e_1\cap e_2\cap U\setminus V_i=\emptyset$, unless they are coincident edges on the same set of 3 vertices.
\end{enumerate}
We say that $\cV$ is \emph{$y$-almost }$\left(m',\Delta'\right)$-good
with respect to $(H,U)$ if properties (i) and (ii) hold, and if at most $y/2$ edges
of $H$ fail to satisfy property (iii) and at most $y/2$ pairs of
edges of $H$ fail to satisfy property (iv).
\end{definition}

To explain property (iv), the idea is that if $e_1,e_2\in E(H)$ both intersect $V_i$ in two vertices, then the third vertex of $e_1$ is different to the third vertex in $e_2$, except for complications caused by multiple coincident edges and complications caused by the few high-degree vertices outside $U$.

Next, the following lemma allows us to find an almost-good partition in the setting of \cref{lem:almost-linear-3} where degrees and codegrees are controlled.

\begin{lemma}
\label{lem:exists-good-3}Consider an $m$-edge, $n$-vertex $3$-multigraph
$H$ with a distinguished vertex subset $U$. Suppose that $H\left[U\right]$
has $\Omega\left(m\right)$ edges, $\deg_H\left(u\right)\le\Delta$
for all $u\in U$, and $\deg_H\left(u,v\right)\le g$ for all distinct $u,v\in U$.
Let $p=\Delta^{-3/5}\land\left(g^{-2/3}\Delta^{-1/3}\right)$ and
suppose $g=o( pm)$ and $g\log n=o(p\Delta)$. Then, for any $c>0$,
there are $m'=\Omega\left(pm\right)$ and $\Delta'=O\left(p\Delta\right)$
such that there exists a $\left(cm'/\sqrt{\Delta'}\right)$-almost
$\left(m',\Delta'\right)$-good partition $\cV=\left(V_{1},\dots,V_{t}\right)$
with respect to $\left(H,U\right)$.
\end{lemma}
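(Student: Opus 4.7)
The plan is to build the partition randomly: fix a large constant $K=K(c)$ and assign each vertex of $U$ independently to a uniformly chosen one of $t=\lceil K/p\rceil$ parts; write $q=1/t\asymp p/K$. Choose $m'$ and $\Delta'$ to be $\Omega(pm)$ and $O(p\Delta)$ respectively (with constants depending on $c$), so that $y:=cm'/\sqrt{\Delta'}$ has the same order as $y_0:=\sqrt{p}\,m/\sqrt{\Delta}$, up to a constant depending on $c$. The strategy is to verify that each of the four events defining (almost-)goodness holds with probability $>3/4$, so a union bound yields a positive-probability witness.

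Property~(i) is a concentration statement about $S=\sum_{\{u,w\}\subset U}\mu_{uw}Y_{uw}$, where $\mu_{uw}\le g$ is the multiplicity of the pair $\{u,w\}$ in $G(H[U])$ and $Y_{uw}$ indicates that $u,w$ fall in the same part; so $\E S=\Theta(qm)$. The crucial observation is that the pairwise covariances vanish: for two distinct pairs $\{u,w\}\neq\{u',w'\}$, we have $\E[Y_{uw}Y_{u'w'}]=q^2=\E Y_{uw}\E Y_{u'w'}$, whether the pairs share a vertex or not. Hence $\Var S\le q\sum\mu_{uw}^2\le gq\sum\mu_{uw}=O(gqm)$, and Chebyshev's inequality yields $\Pr(S<\E S/2)=O(g/(qm))=o(1)$ via $g=o(pm)$. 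Property~(ii) follows from a weighted Chernoff bound on the degree of $v\in U$ in $\bigcup_iG(H)[V_i]$, which equals $\sum_u a_u Y_{uv}$ with weights $a_u\le g$ and $\sum_u a_u\le 2\Delta$: normalising to $(a_u/g)Y_{uv}\in[0,1]$ gives $\Pr(\deg(v)>10q\Delta)\le e^{-\Omega(q\Delta/g)}=o(1/n)$ by $g\log n=o(p\Delta)$, and a union bound over $v\in U$ suffices.

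For properties~(iii) and~(iv) I apply Markov's inequality to the number of violations. An edge wholly inside a part is a (iii)-violation with probability $\le q^2$, contributing $\le q^2m$ in expectation. For (iv), classify edge-pairs $(e_1,e_2)$ by $s:=|e_1\cap e_2|$: if $s=2$ then a violation requires the three non-common vertices to share a part while the two common vertices split, which has probability $O(q^2)$, and the number of such pairs is $\sum_{\{u,w\}}\binom{\mu_{uw}}{2}=O(gm)$; if $s=1$ then a violation requires the four non-common vertices to share a part while the common vertex sits elsewhere, which has probability $\le q^3$, and the number of such pairs is $\sum_{v\in U}\binom{\deg_H(v)}{2}=O(\Delta m)$. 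The total expected count of (iii)- and (iv)-violations is therefore
\[
O(q^2m+q^2gm+q^3\Delta m)=K^{-2}\cdot O(p^2m+p^2gm+p^3\Delta m),
\]
which must be compared with $y_0$.

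The principal obstacle is to show the three coefficients $p^{3/2}\sqrt{\Delta}$, $p^{3/2}g\sqrt{\Delta}$, and $p^{5/2}\Delta^{3/2}$ are all $O(1)$. Since $p\le\Delta^{-3/5}$ we have $p^{3/2}\sqrt{\Delta}\le\Delta^{-2/5}\le 1$ and $p^{5/2}\Delta^{3/2}\le 1$; for the middle term, when $p=\Delta^{-3/5}$ the bound $p\le g^{-2/3}\Delta^{-1/3}$ forces $g\le\Delta^{2/5}$ so $p^{3/2}g\sqrt{\Delta}\le 1$, while when $p=g^{-2/3}\Delta^{-1/3}$ direct computation gives $p^{3/2}g\sqrt{\Delta}=1$. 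Hence the expected number of violations is $O(y_0)/K^2$, which is at most $y/8$ once $K=K(c)$ is sufficiently large; Markov then yields (iii) and (iv) each with probability $>3/4$, completing the union bound. The definition $p=\Delta^{-3/5}\wedge(g^{-2/3}\Delta^{-1/3})$ is essentially calibrated so that these three distinct error terms simultaneously sit at or below $y_0$, and this balancing is the essential reason the proof works.
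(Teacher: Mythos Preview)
Your proposal is correct and follows essentially the same approach as the paper. The paper proves this lemma as a special case of the more general \cref{lem:exists-good}, using a uniformly random partition into $\Theta(1/p)$ parts, Chebyshev for property~(i), Bernstein's inequality for property~(ii), and Markov's inequality for the violation counts in~(iii) and~(iv); your argument differs only cosmetically (you note the zero-covariance structure for~(i) rather than independence, and you spell out explicitly why the definition of $p$ makes the three error ratios $p^{3/2}\sqrt{\Delta}$, $p^{3/2}g\sqrt{\Delta}$, $p^{5/2}\Delta^{3/2}$ bounded, which the paper leaves as ``recalling the definition of $p$'').
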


To prove \cref{lem:exists-good-3}, we may simply take a random partition and show that it satisfies the desired almost-goodness property with positive probability. We defer the details of the proof to \cref{subsec:2-cuts-almost-linear}, where we will prove a generalisation, \cref{lem:exists-good}, of \cref{lem:exists-good-3}.

The next lemma allows us to combine large-excess cuts on many vertex
subsets into a single large-excess cut. 

\begin{lemma} \label{lem:combine-cuts}Consider
a graph $G$ with $m$ edges and suppose there are disjoint vertex
subsets $V_{1},\dots,V_{t}$, with $V_i$ inducing a graph $G_{i}$ with a
2-cut of excess $x_{i}$. Then $G$ has a 2-cut with excess at least $\sum_{i}x_{i}$.
\end{lemma}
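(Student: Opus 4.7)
The plan is to build a single $2$-cut of $G$ by combining the given cuts of the induced subgraphs $G_i$ together with a random choice for all remaining vertices, exploiting the observation that each edge inside some $V_i$ can be made to contribute deterministically while every other edge contributes its unbiased expectation of $1/2$.

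More precisely, for each $i$ let $(A_i, B_i)$ be a $2$-cut of $G_i$ of size $e(G_i)/2 + x_i$. I would construct a random $2$-cut $(A,B)$ of $G$ as follows. Let $\sigma_1, \dots, \sigma_t \in \{0,1\}$ be independent uniform bits, and for each $i$ place the vertices of $A_i$ into $A$ if $\sigma_i = 0$ and into $B$ otherwise, and symmetrically for $B_i$. For every vertex $v \in V(G) \setminus \bigcup_i V_i$, independently and uniformly at random place $v$ into $A$ or into $B$.

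The key calculation is now routine. First, consider an edge $e$ lying entirely inside some $V_i$: the operation of flipping $\sigma_i$ simultaneously swaps both endpoints between the two parts, so $e$ is cut by $(A,B)$ if and only if it is cut by $(A_i, B_i)$ in $G_i$. Hence the contribution from such edges equals exactly $\sum_i \bigl( e(G_i)/2 + x_i \bigr)$. Second, for any edge $e$ not entirely contained in a single $V_i$, the two endpoints are placed into $A$ or $B$ by independent fair coin flips (either the $\sigma_i$ of different blocks, or an external random bit, or a combination of both), so $e$ is cut with probability exactly $1/2$. The number of such edges is $m - \sum_i e(G_i)$, giving an expected contribution of $(m - \sum_i e(G_i))/2$. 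Adding these two contributions yields an expected cut size of $m/2 + \sum_i x_i$, and so there exists an outcome realising a $2$-cut of $G$ with excess at least $\sum_i x_i$.

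I expect no real obstacle: the only point requiring a little care is the independence claim for ``crossing'' edges, which follows because distinct $\sigma_i$ are independent and the bits for vertices outside $\bigcup_i V_i$ are independent of everything else, so whenever an edge has endpoints governed by different such bits (which is exactly the case when it is not inside a single $V_i$), its endpoints are independent uniform elements of $\{A,B\}$.
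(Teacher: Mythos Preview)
Your proof is correct and is essentially identical to the paper's own argument: the paper also chooses a uniformly random permutation $\sigma_i$ of the two parts for each block and combines the cuts, observing that edges inside a block contribute deterministically while crossing edges are cut with probability exactly $1/2$. The only cosmetic difference is that the paper first turns the outside vertices into singleton blocks rather than assigning them independent random bits, which amounts to the same thing.
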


\begin{proof} We can assume $V_{1},\dots,V_{t}$ form a
partition of $V\left(G\right)$, adding singleton parts if necessary.
Consider a cut $\omega_{i}:V_{i}\to\left\{ 1,2\right\}$ of each $G_i$ with excess
$x_{i}$. For each $i$, let $\sigma_{i}:\left\{ 1,2\right\} \to\left\{ 1,2\right\} $
be a uniformly random permutation and define the random cut $\omega:V\left(G\right)\to\left\{ 1,2\right\} $
by $\omega\left(v\right)=\sigma_{i}\left(\omega_{i}\left(v\right)\right)$
for $v\in V_{i}$. Among the $m'=e\left(\bigcup_{i}G\left[V_{i}\right]\right)$
edges within the parts, $\sum_{i}\left(e\left(G\left[V_{i}\right]\right)/2+x_{i}\right)=m'/2+\sum_{i}x_{i}$
of them are multicoloured, while the probability every other edge is
multicoloured is $1/2$. Therefore, the expected excess of $\omega$
is $\sum_{i}x_{i}$, so there must be an outcome of $\omega$ with
at least this excess. \end{proof}

Next, we need the fact that random graphs have large-excess cuts.
This is a corollary of a more general result, \cref{lem:random-cut},
which we will prove in \cref{subsec:2-cuts-almost-linear}.
\begin{lemma}
\label{lem:random-subgraph-cut}Fix constant $p\in (0,1)$ and let $G$ and $F$ be multigraphs on the same set of $n$ vertices such that $G$ has $m$ edges and maximum degree $\Delta$. Suppose, moreover, that the edges of $G$ are partitioned into groups, each only containing edges between the same two vertices. Let $R$ be the random multigraph obtained by including each group independently with probability $p$ and let $X^{\max}$ be the maximum excess of a 2-cut of $F\cup R$. Then $\E X^{\max}=\Omega\left(m/\sqrt{\Delta}\right)$.
\end{lemma}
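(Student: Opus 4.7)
The plan is to apply the greedy max-cut algorithm to $F\cup R$ with a uniformly random ordering of the vertices, and to exploit the randomness in $R$ to lower bound the expected excess of the resulting cut.

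For any vertex ordering $v_1,\dots,v_n$, the greedy max-cut algorithm (place each $v_i$ in the colour class that creates more cut-edges to earlier vertices) produces a 2-cut of $F\cup R$ with excess $\frac12\sum_{i=1}^n|\tilde A_i-\tilde B_i|$, where $\tilde A_i$ and $\tilde B_i$ are the numbers of $(F\cup R)$-edges from $v_i$ into each of the two colour classes already formed. Since $X^{\max}$ is at least the greedy excess, it suffices to show that $\E_R \sum_i |\tilde A_i - \tilde B_i| = \Omega(m/\sqrt{\Delta})$ for an appropriate ordering.

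At step $i$, condition on the $\sigma$-algebra $\mathcal F_{i-1}$ generated by the inclusion indicators $\xi_k$ of the groups of $G$ with both endpoints in $\{v_1,\dots,v_{i-1}\}$. Greedy's previous colours are $\mathcal F_{i-1}$-measurable, but the indicators $\xi_k$ of groups incident to $v_i$ are independent of $\mathcal F_{i-1}$. Write $\tilde A_i - \tilde B_i = \mu_i + X_i$, where $\mu_i$ captures the $F$-contribution together with the conditional mean of the $R$-contribution (both $\mathcal F_{i-1}$-measurable) and $X_i = \sum_k(-1)^{c_{u(k)}} m_k(\xi_k - p)$ is summed over groups $k$ of $G$ incident to $v_i$ whose other endpoint $u(k)$ precedes $v_i$ in the ordering. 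Conditionally on $\mathcal F_{i-1}$, $X_i$ is a zero-mean sum of independent bounded terms with variance $p(1-p)\sum_k m_k^2 \ge p(1-p)\,b_i$, where $b_i := \sum_k m_k$ is the total multiplicity of $G$-edges from $v_i$ to earlier vertices. A Paley--Zygmund computation (using $\sum_k m_k^4 \le m_{\max}^2\sum_k m_k^2$ together with $p(1-p)=\Theta(1)$ to show $\E[(X_i+\mu_i)^4\mid\mathcal F_{i-1}] = O(\E[(X_i+\mu_i)^2\mid\mathcal F_{i-1}]^2)$) gives $\E[|\tilde A_i-\tilde B_i|\mid\mathcal F_{i-1}] \ge c\sqrt{b_i}$ for a constant $c=c(p)>0$.

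Taking total expectation and summing over $i$, for any fixed ordering we obtain $\E_R \sum_i |\tilde A_i - \tilde B_i| \ge c\sum_i \sqrt{b_i}$. Now average over a uniformly random ordering $\pi$: for each vertex $v_i$, the random variable $b_i(\pi)$ takes values in $[0,\deg_G(v_i)]$ with mean $\deg_G(v_i)/2$, so by concavity of $\sqrt{\cdot}$ (with the extremal distribution supported on $\{0,\deg_G(v_i)\}$) we have $\E_\pi\sqrt{b_i(\pi)} \ge \tfrac12\sqrt{\deg_G(v_i)}$. Combining with $\sqrt{\deg_G(v_i)}\ge \deg_G(v_i)/\sqrt{\Delta}$ and $\sum_i \deg_G(v_i)=2m$, this yields $\E_{\pi,R}\sum_i|\tilde A_i-\tilde B_i| \ge \Omega(m/\sqrt{\Delta})$, so some particular ordering attains the required bound in $R$-expectation, completing the proof. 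The main obstacle is the Paley--Zygmund step: one must verify that the fourth-to-squared-second-moment ratio of $X_i+\mu_i$ is bounded by a constant depending only on $p$, uniformly in the deterministic offset $\mu_i$ and the group sizes $m_k$. This reduces to the observation that no single group can dominate the variance, i.e.\ $\sigma_i^2 \ge p(1-p)\,m_{\max}^2$, combined with standard moment estimates for sums of independent bounded random variables.
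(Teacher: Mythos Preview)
Your proof is correct and follows essentially the same strategy as the paper: greedy cut construction, a concavity argument over a random ordering to get $\sum_v\sqrt{d_<(v)}=\Omega(m/\sqrt\Delta)$, and a Paley--Zygmund step at each vertex. The paper organises things slightly differently---it first proves a more general lemma (\cref{lem:random-cut}) for arbitrary random edge weights with bounded kurtosis and then specialises---and it fixes a good ordering before analysing the randomness rather than averaging over orderings at the end, but these are presentational differences only. If anything, your treatment of the deterministic shift $\mu_i$ coming from $F$ is a bit more explicit than the paper's.
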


Now we can prove \cref{lem:almost-linear-3}.
\begin{proof}[Proof of \cref{lem:almost-linear-3}]
Let $V=V\left(H\right)$. Recall the operator $P$ and the multihypergraphs
$\Hpart{\rho}$ defined in the proof of \cref{thm:excess-n}. (Informally, we remind the reader that $P$ restricts the information of whether a vertex is in part 1 or 2: if $\omega(v)\in\{1,2\}$, then $P\omega(v)=*$.) In the case of 3-cuts in 3-graphs $\Hpart{\rho}$ is always a multigraph, so we write $\Gpart{\rho}=\Hpart{\rho}$. As outlined at the beginning of this section, $\Gpart{\rho}$ may be obtained by considering each hyperedge of $H$ which has exactly one vertex in part 3, deleting
that vertex, and adding the resulting pair of vertices as an edge
in $\Gpart{\rho}$. Recall that a 2-cut
of $\Gpart{\rho}$ with size $z$ corresponds to a 3-cut $\omega$
of $H$ with the same size $z$ satisfying $P\omega=\rho$.

Consider some small $c>0$ and apply
\cref{lem:exists-good-3} to find a $cx$-almost $\left(2m',\Delta'\right)$-good
partition $\cV=\left(V_{1},\dots,V_{t}\right)$ with respect to $(H,U)$,
where $m'=\Omega\left(pm\right)$, $\Delta'=O\left(p \Delta\right)$ and $x=m'/\sqrt{\Delta'}=\Omega(\sqrt p m/\sqrt \Delta)$.
Delete at most $cx$ edges of $H$ (causing corresponding changes in the $\Gpart{\rho}$) so that $\cV$ becomes an $\left(m',\Delta'\right)$-good
partition with respect to $(H,U)$. We will show that the resulting 3-graph $H$
has a 3-cut with excess $\Omega\left(x\right)$, where the implied constant does not depend on $c$. This suffices to prove the lemma, because, for sufficiently
small $c>0$, restoring the deleted edges will not have too significant
an impact on the excess of the 3-cut.

Let $\omega$ be a uniformly random 3-cut of $H$,
so that $\Gpart{P\omega}$ is a random multigraph. For each $i$ let $V_i' = V_i\cap V(\Gpart{P\omega})=\left\{ v\in V_{i}:\omega\left(v\right)\in\left\{ 1,2\right\} \right\}$. The point of the good partition $\cV$ is that while $\Gpart{P\omega}$ has dependencies between its edges, if we condition on some $V_i'$ and individually look at the induced subgraph $\Gpart{P\omega}[V_i']$, it is a random subgraph of $G(H)[V_i']$ with essentially independent edges. This is because for every edge $e\in E(G(H)[V_i'])$ (arising from the edge $e\cup\{v_e\}\in E(H)$, say), the presence of $e$ in $\Gpart{P\omega}[V_i']$ solely depends on the random value $P\omega(v_e)$. By property (iii) of goodness the $v_e$ are outside $V_i'$ and by property (iv), except for repetitions due to multiple coincident edges, all the $v_e$ inside $U$ are distinct. That is to say, each of the edges of $G(H[U])[V_i']$ have their own independent sources of randomness. Let $\Ggood{i}=G(H[U])[V_i']$ and let $\Gbad{i}$ contain the edges $e$ in $G(H)[V_i']$ which are not in $\Ggood{i}$, meaning that $v_e\in V\setminus U$ (these $v_e$ may not be distinct). See \cref{fig:good} for an illustration of the above considerations.

\begin{figure}[h]
\begin{center}
\includegraphics{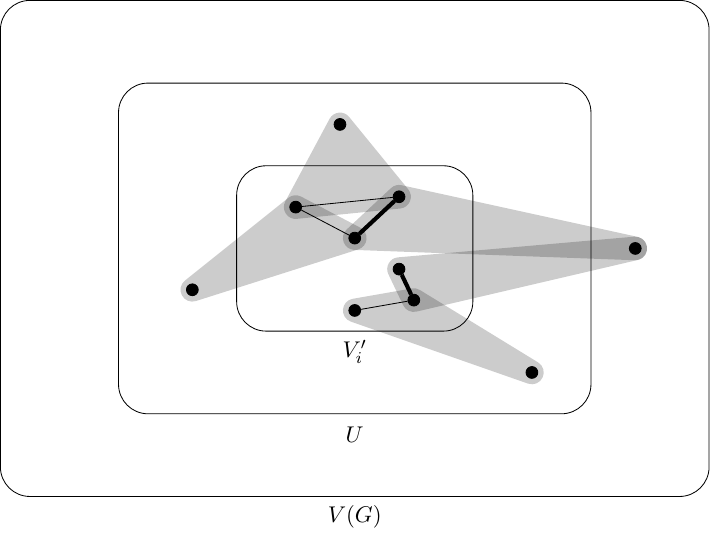}
\end{center}

\caption{\label{fig:good}A close-up picture of some $V_i'$. Each edge of $H$ intersecting $V_i'$ in two vertices contributes an edge to $G(H)[V_i']$. The thinner edges correspond to $\Ggood{i}=G(H[U])[V_i']$ and the thicker edges correspond to $\Gbad{i}$. Crucially, all edges $e\in E(\Ggood{i})$ have different $v_e$. Note that some of the edges in $G(H[U])[V_i']$ may be multiple coincident edges, but if so the corresponding edges of $H$ also coincide.}
\end{figure}

For each $i$, let $\Rgood{i}$ (respectively, $\Rbad{i}$) be the random multigraph of edges $e\in E(\Ggood{i})$ (respectively, $e\in E(\Gbad{i})$) such that $P\omega(v_e)=3$. Taking into account the above discussion, $\Gpart{P\omega}[V_i']=\Rgood{i}\cup \Rbad{i}$ and if we condition on $V_i'$, then $\Rgood{i}$ and $\Rbad{i}$ are independent. Also, let $X_{i}$ be the maximum excess of a 2-cut of $\Gpart{P\omega}\left[V_{i}'\right]$ and write $\E\left[X_i\cond V_i'\right]$ for the conditional expectation of $X_i$ given the random set $V_i'$. We will next use \cref{lem:random-subgraph-cut} to estimate $\E\left[X_i\cond V_i'\right]$. Towards this end, condition on a particular outcome of $V_i'$, so that in what follows we may consider it a fixed set of vertices. Since $\Rgood{i}$ and $\Rbad{i}$ are independent, we may also condition on any particular outcome of $\Rbad{i}$, without affecting the distribution of $\Rgood{i}$. By property (ii) of goodness, we are now in a position to apply \cref{lem:random-subgraph-cut}, with $F=\Rbad{i}$, $G=\Ggood{i}$, $R=\Rgood{i}$, $p=1/3$, $\Delta=\Delta'$ and the groups corresponding to edges of $\Ggood{i}$ coming from multiple coincident edges of $H$ on the same 3 vertices. It follows that
$$\E\left[X_i\cond V_i'\right]=\Omega\left(\frac{e(G(H[U])[V_i'])}{\sqrt{\Delta'}}\right).$$
The above is true for any outcome of $V_i'$. Now, we  return to viewing $V_i'$ as a random set (depending on $P\omega$, where $\omega$ is a uniformly random 3-cut of $H$). Each edge of $G(H[U])[V_i]$ is present in $G(H[U])[V_i']$ with probability $(2/3)^2=\Omega(1)$, so, by property (i) of goodness, with $X=\sum_{i}X_{i}$ we have
\begin{align*}
\E X&=\sum_{i}\E\left[\E\left[X_i\cond V_i'\right]\right]=\sum_{i}\Omega\left(\frac{e(G(H[U])[V_i])}{\sqrt{\Delta'}}\right)=\Omega\left(\frac{m'}{\sqrt{\Delta'}}\right)=\Omega(x).
\end{align*}
By \cref{lem:combine-cuts},
$\Gpart{P\omega}$ has a cut with size at least $e(\Gpart{P\omega})/2+X$. Each of the $3m$ edges of $G(H)$ corresponds to an edge of $\Gpart{P\omega}$ with probability $(1/3)(2/3)^2=4/27$, so $\E e(\Gpart{P\omega})=(4/9)m$ and there is an outcome of $P\omega$ such that $\Gpart{P\omega}$ (and therefore $H$) has a cut of size at least
$$\E\left[e(\Gpart{P\omega})/2+X\right]=(2/9)m+\Omega(x).$$
Observing that a random 3-cut of $H$ has expected size $(2/9)m$, we are done.
\end{proof}

\section{2-cuts in almost-linear hypergraphs\label{subsec:2-cuts-almost-linear}}

In this section, we prove \cref{lem:almost-linear-2}. Although the proof is overall more complicated than the proof of \cref{lem:almost-linear-3}, there is one simplifying reduction that we can make in this case: we do not need to worry about the ``bad'' vertices outside $U$.

\begin{lemma}
\label{lem:no-bad-reduction}
To prove \cref{lem:almost-linear-2}, it suffices to consider the case where $U=V(H)$.
\end{lemma}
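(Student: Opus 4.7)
My plan is to reduce the general case of \cref{lem:almost-linear-2} to the special case $U = V(H)$ by constructing an auxiliary mixed $k$-multigraph $H^*$ on vertex set $U$, applying the special case to $H^*$, and converting the resulting 2-cut back to $H$ via a random extension.

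Specifically, I would define $H^*$ as follows: for each edge $e \in E(H)$ with $e \subseteq U$, include $2^k$ copies of $e$; for each edge $e$ with $|e \cap U| \ge 2$ but $e \not\subseteq U$, include $2^{k-|e\setminus U|}$ copies of the truncated edge $e \cap U$; and discard all other edges. The multiplicities are designed precisely so that the identity below holds. A routine check confirms that $H^*$, with the full vertex set $U$ playing the role of its own distinguished subset, satisfies the hypotheses of \cref{lem:almost-linear-2}: we have $e(H^*) = \Theta(m)$; the number of size-$\ge 4$ edges in $H^*$ is $\Omega(e(H^*))$, inherited from the corresponding count in $H[U]$; degrees and codegrees in $H^*$ are $O(\Delta)$ and $O(g)$ respectively (since each edge of $H^*$ through a vertex $u$ comes from an edge of $H$ through $u$, with multiplicity at most $2^k$); hence $p^* = \Theta(p)$, and the asymptotic smallness conditions transfer.

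The crux of the argument is the identity
\[
(\text{excess of }\omega\text{ in }H^*) \;=\; 2^k \cdot \E_\tau\bigl[\text{excess of }\omega \cup \tau\text{ in }H\bigr],
\]
valid for every $\omega: U \to \{1, 2\}$, where $\tau: V(H) \setminus U \to \{1, 2\}$ is a uniformly random independent extension. To prove it, I would compute the expected contribution to the right-hand side from each edge $e \in E(H)$, using the fact that $\Pr[e \text{ multicoloured} \mid \omega] = 1 - \one[e \cap U \text{ monochromatic}] \cdot 2^{-|e \setminus U|}$ when $e \not\subseteq U$: edges $e \subseteq U$ contribute the single-edge excess of $e$ in $\omega$; edges with $|e \cap U| \le 1$ contribute $0$; and edges with $|e \cap U| \ge 2$ and $e \not\subseteq U$ contribute exactly $2^{-|e \setminus U|}$ times the single-edge excess of $e \cap U$ in $\omega$. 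Matching these contributions against the multiplicities defining $H^*$ yields the identity.

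Finally, applying the special case of \cref{lem:almost-linear-2} to $H^*$ produces $\omega: U \to \{1, 2\}$ whose excess in $H^*$ is $\Omega(\sqrt{p}\,m / \sqrt{\Delta})$, and then the identity together with the probabilistic method guarantees some outcome of $\tau$ for which the 2-cut $\omega \cup \tau$ of $H$ has excess at least $\E_\tau[\text{excess of } \omega \cup \tau] = 2^{-k}(\text{excess of } \omega \text{ in } H^*) = \Omega(\sqrt{p}\, m / \sqrt{\Delta})$, completing the reduction. The main technical obstacle is the per-edge case analysis underlying the identity; verifying the parameters for $H^*$ is routine bookkeeping.
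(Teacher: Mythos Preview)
Your proposal is correct and follows the same underlying idea as the paper---reduce to a max-$2$-cut problem on the vertex set $U$ by replacing each edge $e$ with its truncation $e\cap U$, appropriately weighted---but the packaging is different and, in fact, somewhat cleaner. The paper first exposes a random assignment $\rho:\overline U\to\{1,2\}$, builds a $\rho$-dependent auxiliary hypergraph $\Hpart{\rho}$ (two copies of each edge of $H[U]$, plus $e\cap U$ whenever $\rho$ is constant on $e\setminus U$), and then uses the symmetry $\phi\leftrightarrow\overline\phi$ to relate the size of a cut of $\Hpart{\rho}$ to the \emph{average} size of two cuts of $H$; finally it must select a specific $\rho$ with $\E[Z\mid P\omega=\rho]\ge\E Z$. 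You instead define a single deterministic $H^*$ whose edge multiplicities $2^{k-|e\setminus U|}$ are precisely $2^{k-1}$ times the expected multiplicities in the paper's $\Hpart{\rho}$, and you integrate over the random extension $\tau$ directly via the identity $\mathrm{excess}_{H^*}(\omega)=2^k\,\E_\tau[\mathrm{excess}_H(\omega\cup\tau)]$. This bypasses both the $\phi,\overline\phi$ pairing and the need to choose a favourable $\rho$. The per-edge verification you sketch is straightforward (and your formula $\Pr[e\text{ multicoloured}\mid\omega]=1-\one[e\cap U\text{ monochromatic}]\cdot 2^{-|e\setminus U|}$ is exactly what drives the paper's averaging trick as well). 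The hypothesis check for $H^*$ is routine, as you note: edge counts, degrees and codegrees change by at most a factor $2^k$, so $p^*=\Theta(p)$ and the side conditions transfer.
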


To prove \cref{lem:no-bad-reduction} we will again use a variant of the $\Hpart{}$ trick, but this time there will be an extra twist. The basic idea is to consider a random 2-cut, but only expose the parts of the vertices outside $U$. One might hope that the problem of completing this partial cut to a large cut of $H$ can be directly formulated as a max-cut problem on some auxiliary multihypergraph $\Hpart{}$ with vertex set $U$. We would then hope to apply the $V(H)=U$ case of \cref{lem:almost-linear-2} and show that the resulting large-excess cut of $\Hpart{}$ corresponds, on average, to a large-excess cut of $H$. Unfortunately, fixing the parts of vertices outside $U$ introduces some asymmetry: there are now some edges $e$ which so far have vertices only in one specific part, meaning that for $e$ to be multicoloured we need to put one of the vertices of $e\cap U$ in the other part. This asymmetry cannot be expressed in a max-cut problem.

We overcome this issue with a simple averaging trick. Observe that the choices $\phi:U\to\{1,2\}$ of parts for vertices in $U$ can be paired up in a natural way: we pair $\phi$ with its ``opposite'' $\overline{\phi}:U\to\{1,2\}$ defined by $\overline{\phi}(u)=3-\phi(u)$. Now, given a choice for the parts of the vertices outside $U$, instead of trying to find $\phi:U\to\{1,2\}$ completing our 2-cut such that the size of the resulting cut of $H$ is maximised, we aim to find $\phi:U\to\{1,2\}$ such that the \emph{average} size of the two cuts completed by $\phi$ and by $\overline{\phi}$ is maximised. This problem can in fact be formulated as a 2-cut problem, as follows. For an edge $e$ intersecting $U$, which so far has vertices only in one specific part, there are two relevant possibilities for $\phi(e\cap U)$. Either $\phi(e\cap U)=\{1,2\}$, in which case $e$ is multicoloured in both of the two cuts completed by $\phi$ and by $\overline{\phi}$, or else $|\phi(e\cap U)|=1$, in which case $e$ will be multicoloured in exactly one of the two completions given by $\phi$ and $\overline{\phi}$. That is to say, the contribution of $e$ to the average size of the two cuts under consideration depends only on whether $e\cap U$ is multicoloured by $\phi$.

Of course, there are also edges $e$ of $H$ which are completely contained in $U$ and, for these edges, the averaging has no effect: either $e$ is multicoloured by both $\phi$ and $\overline \phi$ or it is multicoloured by neither. Our auxiliary multihypergraph $\Hpart{}$ will therefore consist of edges of two different types: first, the edges in $H[U]$, and second, derived edges $e\cap U$ of the type described in the previous paragraph. These two types of edges need to effectively be assigned different weights, because they contribute differently to the average size of a pair of opposite cuts. We will accomplish this by making multiple copies of the edges in $H[U]$. The details follow.

\begin{proof}[Proof of \cref{lem:no-bad-reduction}]
We will prove \cref{lem:almost-linear-2} assuming its statement holds for any multihypergraph $H$ and $U=V(H)$. Let $V=V(H)$ and let $\overline U=V\setminus U$. For a 2-cut $\omega:V\to \{1,2\}$, let $P\omega:\overline U\to \{1,2\}$ be the restriction of $\omega$ to $\overline U$.

For $\rho:\overline U \to \{1,2\}$, let $H_{\rho}'\subseteq H$ consist of
those edges $e$ in $H$ but not in $H[U]$ with $\left|\rho\left(e\setminus U\right)\right|<2$ and $|e\cap U|>0$. This means the edges $e$ in $H_{P\omega}'$ are precisely those edges that are not contained in $U$, such that given the restricted information $P\omega$ it is not yet determined whether $e$ is multicoloured by $\omega$. Let $\Hpart{\rho}$ be the mixed $k$-multigraph with vertex set $U$, containing two copies of each of the edges in $H[U]$ and also containing an edge $e\cap U$ for each $e\in E(H_{\rho}')$. Let $N^\multi_{\rho}$ be the number of edges of $H$ which can already be seen to be multicoloured by $\omega$ given the information $P\omega=\rho$.

Now, consider any $\rho:\overline U\to\{1,2\}$ and $\phi:U\to\left\{ 1,2\right\}$. Let $z^U$ be the size of $\phi$ as a 2-cut of $H[U]$, let $z'$ be the size of $\phi$ as a 2-cut of $H_{\rho}'$, and let $\zpart=2z^U+z'$ be the size of $\phi$ as a 2-cut of $\Hpart{\rho}$. Let $\omega:V\to \{1,2\}$ be the $2$-cut which is equal to $\rho$ outside $U$ and equal to $\phi$ inside $U$, and let $\overline\omega:V\to \{1,2\}$ be the $2$-cut which is equal to $\rho$ outside $U$ and equal to $3-\phi$ inside $U$.

For each edge $e\in E(H_\rho')$ whose corresponding edge in $\Hpart{\rho}$ is not already multicoloured by $\phi$, $e$ is multicoloured by exactly one of $\omega$ and $\overline \omega$. Therefore, the average
size of $\omega$ and $\overline \omega$, as 2-cuts of $H$, is
\begin{equation}\label{eq:excess-transfer}
z^U+z'+\left(e\left(H_{\rho}'\right)-z'\right)/2+N^\multi_\rho =\zpart/2+e\left(H_{\rho}'\right)/2+N^\multi_\rho.
\end{equation}
Now, let $\omega:V\to\left\{ 1,2\right\} $ be a uniformly random 2-cut of $H$, let $Z$ be the size of this 2-cut, and let $\Zpart$ be the size of $\omega$ restricted to $U$ as a 2-cut of $\Hpart{P\omega}$. Taking the expectation of \cref{eq:excess-transfer,} we have
\begin{equation} \E\left[Z\cond P\omega=\rho\right]=\E\left[\Zpart\cond P\omega=\rho\right]/2+e\left(H_{\rho}'\right)/2+N^\multi_\rho.\label{eq:excess-transfer-random}\end{equation}
Conditioning on the event $P\omega=\rho$, note that the restriction of $\omega$ to $U$
is a uniformly random 2-cut of $\Hpart{\rho}$, so $\E\left[\Zpart\cond P\omega=\rho\right]$ is the average size of a random 2-cut of $\Hpart{\rho}$. Combining \cref{eq:excess-transfer} and \cref{eq:excess-transfer-random}, if $\Hpart{\rho}$
has a 2-cut with some excess $x'$, then $H$ has a 2-cut with excess
at least
\begin{equation}
\left(x'+\E\left[\Zpart\cond P\omega=\rho\right]\right)/2+e\left(H_{\rho}'\right)/2+N^\multi_\rho-\E Z=x'/2+\E\left[Z\cond P\omega=\rho\right]-\E Z.\label{eq:excess-transfer-clean}
\end{equation}
Finally, observe that $\Hpart{P\omega}$ always contains (two copies of) all $\Omega(m)$ edges of $H[U]$ with size at least four and that there is an outcome $\rho$ of $P\omega$ such
that $\E\left[Z\cond P\omega=\rho\right]\ge \E Z$. Since $U=V(\Hpart{\rho})$, by assumption, $\Hpart{\rho}$ has a 2-cut with excess $\Omega(\sqrt p m/\sqrt \Delta)$, corresponding to a 2-cut of $H$ with at least half this excess.
\end{proof}

Now, our proof of \cref{lem:almost-linear-2} will follow the same general approach as for
\cref{lem:almost-linear-3}, but with a few added complications. Before we discuss the proof further, we give some generalisations of the lemmas
in \cref{subsec:3-cuts-almost-linear}. First, we generalise the notion of goodness from \cref{def:good-3}.

\begin{definition}
\label{def:good}Consider a mixed $k$-multigraph $H$ and a (not necessarily spanning) subgraph $H'$ of $H$. Consider a partition $\cV=\left(V_{1},\dots,V_{t}\right)$
of $V(H')$. We say that $\cV$ is \emph{$\left(m',\Delta'\right)$-good}
with respect to $\left(H,H'\right)$ if: 
\begin{enumerate}
\item[(i)] $\bigcup_{i}G\left(H'\right)\left[V_{i}\right]$ has
at least $m'$ edges; 
\item[(ii)] $\bigcup_{i}G\left(H\right)\left[V_{i}\right]$ has maximum degree
at most $\Delta'$; 
\item[(iii)] each edge $e$ of $H$ has vertices in at least $|e\cap V(H')|-1$ different
parts of $\cV$; 
\item[(iv)] there is no pair of edges $e_{1},e_{2}\in E(H)$ and part $V_i$ such that both $e_{1}$ and $e_{2}$ have two vertices in $V_{i}$, comprising a total of at least 3 vertices, and also $e_1$ and $e_2$ intersect in $V(H') \setminus V_{i}$. 
\end{enumerate}
We say that $\cV$ is \emph{$y$-almost }$\left(m',\Delta'\right)$-good
with respect to $(H,H')$ if properties (i) and (ii) hold, and if at most $y/2$ edges
of $H$ fail to satisfy property (iii) and at most $y/2$ pairs of
edges of $H$ fail to satisfy property (iv).
\end{definition}

Note that for a 3-multigraph $H$, if $\cV$ is ($y$-almost) $\left(m',\Delta'\right)$-good with respect to $(H,H[U])$ then $\cV$ is ($y$-almost) $\left(m',\Delta'\right)$-good with respect to $(H,U)$ as defined in \cref{def:good-3}. Therefore, the following lemma is a generalisation of \cref{lem:exists-good-3}.

\begin{lemma}
\label{lem:exists-good}Fix $k$ and consider an $m$-edge, $n$-vertex mixed $k$-multigraph
$H$. Let $H'$ be a (not necessarily spanning) subgraph of $H$ with $\Omega\left(m\right)$ edges. Suppose that $\deg_H\left(u\right)\le\Delta$
for all $u\in V(H')$ and $\deg_H\left(u,v\right)\le g$ for all distinct $u,v\in V(H')$.
Let $p=\Delta^{-3/5}\land\left(g^{-2/3}\Delta^{-1/3}\right)$ and
suppose $g=o( pm)$ and $g\log n=o(p\Delta)$. Then, for any $c>0$,
there is $m'=\Omega\left(pm\right)$ and $\Delta'=O\left(p\Delta\right)$
such that there exists a $\left(cm'/\sqrt{\Delta'}\right)$-almost
$\left(m',\Delta'\right)$-good partition $\cV=\left(V_{1},\dots,V_{t}\right)$
with respect to $\left(H,H'\right)$.
\end{lemma}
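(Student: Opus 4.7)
The plan is to take a uniformly random partition $\cV = (V_1,\dots,V_t)$ of $V(H')$, assigning each vertex independently to a uniformly random part, with $t = \lceil 1/(Kp)\rceil$ for a sufficiently small constant $K > 0$ depending on $c$. I will set $m' = \Theta(Kpm)$ and $\Delta' = \Theta(Kp\Delta)$, so that $m'/\sqrt{\Delta'} = \Theta(\sqrt{K}\cdot m\sqrt{p/\Delta})$, and show that with positive probability $\cV$ is $(cm'/\sqrt{\Delta'})$-almost $(m',\Delta')$-good.

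Properties~(i) and~(ii) follow from standard concentration arguments. For~(i), let $N = e(\bigcup_i G(H')[V_i])$, so $\E N = e(G(H'))/t = \Theta(Kpm)$. A direct computation shows that two edges of $G(H')$ sharing exactly one vertex are in fact uncorrelated, so only pairs of multi-edges on the same pair of vertices contribute to $\Var(N) = O(gmKp)$; Chebyshev's inequality together with the hypothesis $g = o(pm)$ then gives $N = \Theta(Kpm)$ with high probability. For~(ii), the degree of each $v \in V(H')$ in $\bigcup_i G(H)[V_i]$ is, conditional on $v$'s part, a sum of independent random variables each bounded by $g$ with total expectation $O(Kp\Delta)$. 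Bernstein's inequality provides a tail probability of $\exp(-\Omega(Kp\Delta/g))$ per vertex, and the hypothesis $g\log n = o(p\Delta)$ allows a union bound over $V(H')$.

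The substantive work lies in bounding the expected number of violations of~(iii) and~(iv). For~(iii), an edge $e$ is a violation only if its vertices in $V(H')$ span at most $|e \cap V(H')|-2$ parts, an event of probability $O((Kp)^2)$ per edge, giving expected violations $O(K^2 p^2 m)$. For~(iv), I split bad pairs $(e_1,e_2)$ into two cases based on $|e_1 \cap e_2 \cap V(H')|$. When this is $1$, a violation forces four distinct vertices of $S_1 \cup S_2$ to land in one common part, so the per-pair bad probability is $O((Kp)^3)$; the bound $\sum_v \deg_H(v)^2 \le O(m\Delta)$ yields at most $O(K^3 p^3 m\Delta)$ expected violations. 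When this is at least $2$, the dominant configurations have $|S_1 \cup S_2| = 3$, each occurring with probability $O((Kp)^2)$, and using $\deg_H(u,v) \le g$ one has only $O(gm)$ such pairs, giving $O(K^2 g p^2 m)$ expected violations.

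The three identities $p^3\Delta = o(1)$, $g^2 p^3 \Delta = O(1)$, and $p^5 \Delta^3 = O(1)$, which all follow directly from $p = \Delta^{-3/5} \wedge g^{-2/3}\Delta^{-1/3}$ by considering the regimes $g \le \Delta^{2/5}$ and $g \ge \Delta^{2/5}$ separately, let me rewrite each of the above expected violation counts as at most $O(K^{3/2})\cdot m'/\sqrt{\Delta'}$ (or better). Choosing $K$ small enough in terms of $c$ therefore makes each expectation a sufficiently small fraction of $cm'/(2\sqrt{\Delta'})$, so Markov's inequality ensures each violation event fails with controlled probability and a union bound over~(i)--(iv) yields a good partition with positive probability. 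I expect the main obstacle to be the case $|e_1 \cap e_2 \cap V(H')| \ge 2$ of~(iv): it is the only step requiring the identity $g^2 p^3 \Delta = O(1)$, and this identity is precisely what dictates the form of the definition of $p$, and thus the exponent $5/9$ appearing in \cref{thm:nontrivial}.
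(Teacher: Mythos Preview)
Your proposal is correct and follows essentially the same approach as the paper: a uniformly random partition with $t=\Theta(1/p)$ parts, Chebyshev for property~(i), Bernstein with the hypothesis $g\log n=o(p\Delta)$ for property~(ii), and first-moment bounds via Markov for properties~(iii) and~(iv), splitting~(iv) according to whether the two edges share one or at least two vertices in $V(H')$. Your explicit identification of the inequalities $p^5\Delta^3=O(1)$ and $g^2p^3\Delta=O(1)$ as the governing constraints, and your observation that the latter is the bottleneck dictating the choice of $p$, are exactly what the paper's computation $O\left(mg(p')^2+m\Delta(p')^3\right)\le c\left(m'/\sqrt{\Delta'}\right)/8$ encodes (the paper leaves these inequalities implicit).
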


\begin{proof}
Let $t=1/(c'p)$ for some small $c'>0$ (we will assume that $c'$ is small enough to satisfy certain inequalities later in the proof). Let $\cV=\left(V_{1},\dots,V_{t}\right)$ be a random partition
of $V(H')$ obtained by choosing the part of each vertex uniformly at random, so that the probability a vertex falls into a particular part is $p':=c'p$. We will show that $\cV$ satisfies
the necessary almost-goodness properties with positive probability.

Let $M$ be the number of edges in $\bigcup_{i}G\left(H'\right)\left[V_{i}\right]$.
Then we have $\E M=p'e\left(G\left(H'\right)\right)$
and $\E M^{2}\le\left(\left(\vphantom{|^1} p'e\left(G\left(H'\right)\right)\right)^{2}+p'q\right)$,
where $q=O\left(mg\right)$ is the number of pairs of (possibly not
distinct) edges in $G(H')$ on the same two
vertices. (Note that for any two edges of $G(H')$ which do not coincide, their presence in $\bigcup_{i}G\left(H'\right)\left[V_{i}\right]$ is independent). Since we are assuming $g=o( pm)$, we have $\Var M=o\left(\left(\E M\right)^{2}\right)$,
so, by Chebyshev's inequality, property (i) holds a.a.s.\ with $m'=p'e\left(H'\right)/2$.

For each vertex $v\in V(H')$ and $1\le i\le t$, let $Q_v^i=\sum_{w\ne v} \deg_H(v,w)\one_{w\in V_i}$ be the number of edges of $G(H)$ between $v$ and a vertex of $V_i$. Note that $\E Q_v^i\le p'k\Delta$ and
$$\sum_{w\ne v} \Var\left(\deg_H(v,w)\one_{w\in V_i}\right)\le p'\sum_{w\ne v}\deg_H(v,w)^2=O(p\Delta g),$$
and note that each $|\deg_H(v,w)\one_{w\in V_i}|$ is bounded by $g$. If we set $\Delta'=2p'k\Delta=\Theta(p\Delta)$, Bernstein's inequality (see, for example,~\cite[Equation~(2.10)]{BLM13}) implies that,
$$\Pr(Q_v^i> \Delta')\le \exp\left(-\frac{(\Delta')^2}{O(p\Delta g+g\Delta')}\right)=e^{-\Omega(p\Delta/g)}=o(1/n^2).$$
(Here we have used the assumption that $g\log n=o( p\Delta)$.)
Therefore, by the union bound, a.a.s.\ $Q_v^i\le\Delta'$ for each $v,i$. That is, property (ii) a.a.s.\ holds.

Next, the number of pairs of edges of $H$ which share multiple vertices
in $V(H')$ is $O\left(mg\right)$. The probability that such a pair violates
property (iv) is $O\left(\left(p'\right)^{2}\right)$. The number
of pairs of edges of $H$ which share only one vertex in $V(H')$
is $O\left(m\Delta\right)$ and the probability that such a pair violates
property (iv) is $O\left(\left(p'\right)^{3}\right)$. Therefore, recalling the definition of $p$,
the expected number of pairs of edges violating property (iv) is 
\[
O\left(mg\left(p'\right)^{2}+m\Delta\left(p'\right)^{3}\right)\le c\left(m'/\sqrt{\Delta'}\right)/8
\]
for $c'$ sufficiently small in terms of $c$. So, with probability
at least $3/4$, the number of such edges is at most $c\left(m'/\sqrt{\Delta'}\right)/2$.

Finally, we consider (iii). The expected number of edges $f$ of $H$ which do not have vertices
in at least $|f\cap V(H')|-1$ different parts of $\cV$ is $O\left(\left(p'\right)^{2}m\right)\le c\left(m'/\sqrt{\Delta'}\right)/8$ (this inequality is not tight). Therefore, with probability
at least $3/4$, the number of edges violating property (iii) is at
most $c\left(m'/\sqrt{\Delta'}\right)/2$.
\end{proof}

We will also need a generalisation of \cref{lem:combine-cuts}. In \cref{lem:combine-cuts} we only considered multigraphs, but here we will need to consider higher-uniformity multihypergraphs. Unfortunately, it is not in general true that large-excess 2-cuts of induced subgraphs of a multihypergraph $H$ can be combined into a large-excess 2-cut of $H$. We therefore introduce the notion of \emph{average excess}, which does allow us to combine multiple cuts under certain conditions, as follows. For a subset $V'$ of the vertices of a multihypergraph $H$,
a \emph{$V'$-partial} 2-cut of $H$ is an assignment of parts
to the vertices in $V'$. The \emph{average size} of a $V'$-partial
2-cut $\omega':V'\to \{1,2\}$ is the expected size of the random 2-cut of $H$ obtained by starting with $\omega'$ and then choosing
the part of each vertex $v\notin V'$ uniformly at random. The average excess of $\omega'$ is its average size minus the expected size of a uniformly random cut of $H$.

\begin{lemma}
\label{lem:combine}Let $H$ be a mixed multihypergraph and let $V_{1},\dots,V_{t}$ be disjoint subsets of its vertex set such that each edge $f$ of $H$ has vertices in at least $|f|-1$ different parts. Suppose that for each $i\le t$ there is a
$V_{i}$-partial 2-cut of $H$ with average excess $x_{i}$. Then
there is a 2-cut of $H$ with excess at least $\sum_{i}x_{i}$.
\end{lemma}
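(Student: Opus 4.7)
The plan is to extend the random-permutation averaging argument from \cref{lem:combine-cuts} to higher-uniformity edges. Write $V_0 := V(H) \setminus \bigcup_{i=1}^t V_i$ and, as in the proof of \cref{lem:combine-cuts}, refine the partition by splitting $V_0$ into singletons, so that the hypothesis reads: every edge $f$ has vertices in at least $|f|-1$ parts of this refined partition. This forces $|f\cap V_i|\le 2$ for all $i$, with $|f\cap V_i|=2$ for at most one $i$ (necessarily with $i\ge 1$, since singletons contain only one vertex). Given the $V_i$-partial 2-cuts $\omega_i^*$ of average excess $x_i$, I would then define a random 2-cut $\omega:V(H)\to\{1,2\}$ by drawing independent uniform random permutations $\sigma_i:\{1,2\}\to\{1,2\}$ for each $i\ge 1$ and independent uniform random colours $\omega(v)$ for each $v\in V_0$, and setting $\omega(v)=\sigma_i(\omega_i^*(v))$ for $v\in V_i$, $i\ge 1$.

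The main step is then to compute $\E[\text{size}(\omega)]$ edge by edge. For any edge $f$, if no $V_i$ with $i\ge 1$ contains two vertices of $f$, then the colours $\omega(v)$ for $v\in f$ are mutually independent uniform, so $\Pr(f\text{ is multicoloured})=1-2^{1-|f|}$, matching the uniformly random cut baseline exactly. Otherwise, there is a unique $i\ge 1$ with $\{u,v\}=f\cap V_i$; the remaining $|f|-2$ colours are independent uniform, while $(\omega(u),\omega(v))$ is a pair of uniformly random colours forced to be always equal if $\omega_i^*(u)=\omega_i^*(v)$ and always opposite otherwise. A short calculation then gives an excess contribution of $+2^{1-|f|}$ when $\omega_i^*(u)\ne\omega_i^*(v)$ and $-2^{1-|f|}$ when $\omega_i^*(u)=\omega_i^*(v)$.

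The key point, and the one step to check carefully, is that these same two values $\pm 2^{1-|f|}$ describe exactly the contribution of $f$ to the average excess $x_i$: completing $\omega_i^*$ uniformly at random outside $V_i$ gives $\Pr(f\text{ multicoloured})=1$ in the unequal case and $1-2^{2-|f|}$ in the equal case, and subtracting the uniform baseline $1-2^{1-|f|}$ recovers precisely $\pm 2^{1-|f|}$, while edges with $|f\cap V_i|\le 1$ contribute $0$. Since the structural hypothesis guarantees each edge $f$ has $|f\cap V_i|=2$ for at most one index $i$, summing over all edges and all $i$ yields $\E[\text{excess}(\omega)]=\sum_{i=1}^t x_i$, and the probabilistic method produces a fixed 2-cut of $H$ attaining at least this excess. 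I do not anticipate a serious obstacle: the work is concentrated in the above short calculations, and the main care is in the bookkeeping that matches the $\pm 2^{1-|f|}$ contributions in the combined cut with those arising from the definition of average excess.
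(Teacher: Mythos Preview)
Your proposal is correct and follows essentially the same approach as the paper: both complete the $V_i$ to a partition by adding singletons, apply independent uniform random permutations $\sigma_i$ to the given partial cuts, and verify that the expected excess of the combined random cut is $\sum_i x_i$ by using that for each edge $f$ with $|f\cap V_i|=2$, the colours on $f\setminus V_i$ are independent uniform. The only cosmetic difference is that the paper works with the abstract probabilities $p_e$ and $p_e^*$ rather than your explicit values $\pm 2^{1-|f|}$.
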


\begin{proof}
As in the proof of \cref{lem:combine-cuts}, we can assume $V_1,\dots,V_t$ actually form a partition of $V(H)$, adding singleton parts if necessary. For each $1\le i\le t$, let $H_{i}\subseteq H$
be the multihypergraph of edges which intersect $V_{i}$ in two vertices and let $H_{0}$ contain the edges not in any other
$H_{i}$. By assumption, no edge of $H$ can appear in two different $H_{i}$. 

For each $i\in \{1,\dots,t\}$, let $\omega_{i}:V_{i}\to\left\{ 1,2\right\} $ be a
$V_{i}$-partial 2-cut with average excess $x_{i}$. For each edge $e$
in some $H_{i}$, let $p_{e}$ be the probability that that edge is
multicoloured in the random 2-cut obtained by starting with $\omega_{i}$
and choosing the part of each vertex $v\notin V_{i}$ uniformly at
random. Let $p_{e}^{*}=1-2^{1-|e|}$ be the probability that edge is multicoloured
in a uniformly random 2-cut. By the definition of average excess, for each $i$, we have
\[
\sum_{e\in E\left(H_{i}\right)}\left(p_{e}-p_{e}^{*}\right)=x_{i}.
\]
Now let $\sigma_{i}:\left\{ 1,2\right\} \to\left\{ 1,2\right\} $
be a uniformly random permutation and define the random 2-cut $\omega:V\left(H\right)\to\left\{ 1,2\right\} $
by $\omega\left(v\right)=\sigma_{i}\left(\omega_{i}\left(v\right)\right)$
for $v\in V_{i}$. Crucially, for each $e\in E(H_i)$ with $i\ne 0$, the restriction of $\omega$ to $e\setminus V_i$ is a uniformly random function $e\setminus V_i\to\{1,2\}$, because each vertex of $e\setminus V_i$ is in a different $V_j$. Therefore, the expected size of $\omega$ is
\[
\sum_{i\ne0}\sum_{e\in E\left(H_{i}\right)}p_{e}+\sum_{e\in E\left(H_{0}\right)}p_{e}^{*},
\]
so its expected excess is
\[
\sum_{i\ne0}\sum_{e\in E\left(H_{i}\right)}\left(p_{e}-p_{e}^{*}\right)=\sum_{i}x_{i}.
\]
Therefore, there is an outcome of $\omega$ with at least this excess.
\end{proof}

In order to work with the notion of average excess, we will need an additional lemma allowing us to reduce a large-average-excess problem to a large-excess problem for a certain auxiliary edge-weighted graph. We define the size and excess of a cut of an edge-weighted multigraph in the obvious way: the size of a cut is the sum of the weights of multicoloured edges and the excess of a cut is its size minus the expected size of a uniformly random cut.

\begin{lemma}
\label{lem:weighted-reduction}
Consider a multihypergraph $H$ and a vertex subset $V'\subseteq V(H)$ such that no edge of $H$ intersects $V'$ in more than two vertices. Consider the weighted graph $R$ on the vertex set $V'$, defined such that the weight between a pair of vertices $u,v$ is $$\eta_{u,v}=\sum_e 2^{-\left|e\setminus V'\right|}=\sum_e 2^{2-\left|e\right|},$$
where the sum is over all edges $e\in E(H)$ intersecting $V'$ in $u$ and $v$.
Then for any $\omega:V'\to \{1,2\}$, the excess of $\omega$ as a 2-cut of $R$ is exactly equal to the average excess of $\omega$ as a $V'$-partial 2-cut of $H$.
\end{lemma}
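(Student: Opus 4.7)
The plan is a direct edge-by-edge calculation, comparing the contribution of each edge $e \in E(H)$ to (i) the average excess of $\omega$ as a $V'$-partial 2-cut of $H$ and (ii) the excess of $\omega$ as a 2-cut of the weighted graph $R$. Since $|e \cap V'| \in \{0, 1, 2\}$ by hypothesis, there are only three cases to analyse.

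For (i), write the average excess as $\sum_{e \in E(H)} (p_e - p_e^*)$, where $p_e$ is the probability that $e$ becomes multicoloured when the parts of $e \cap V'$ are fixed by $\omega$ and the remaining $|e| - |e \cap V'|$ vertices are chosen independently uniformly at random, and $p_e^* = 1 - 2^{1-|e|}$. A short computation shows: if $|e \cap V'| = 0$ then trivially $p_e = p_e^*$; if $|e \cap V'| = 1$ then $e$ fails to be multicoloured exactly when all remaining $|e|-1$ vertices land in the single fixed part, so $p_e = 1 - 2^{1-|e|} = p_e^*$; and if $|e \cap V'| = 2$ with $e \cap V' = \{u,v\}$, then $p_e = 1$ when $\omega(u) \neq \omega(v)$ and $p_e = 1 - 2^{2-|e|}$ when $\omega(u) = \omega(v)$. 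In the former subcase the contribution to the average excess is $+2^{1-|e|}$ and in the latter it is $-2^{1-|e|}$.

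For (ii), write the excess of $\omega$ on $R$ as
\[
\sum_{\{u,v\}} \epsilon_{u,v} \cdot \tfrac{1}{2}\eta_{u,v}, \qquad \text{where } \epsilon_{u,v} = \begin{cases} +1 & \omega(u) \neq \omega(v), \\ -1 & \omega(u) = \omega(v). \end{cases}
\]
Expanding $\eta_{u,v} = \sum_e 2^{2-|e|}$ (sum over edges with $e \cap V' = \{u,v\}$) and reorganising the sum by edges of $H$, this becomes $\sum_{e : |e \cap V'| = 2} \epsilon_{u_e,v_e} \cdot 2^{1-|e|}$, which matches the contribution computed in (i) term by term. Edges with $|e \cap V'| \le 1$ do not appear in $R$ at all, and by the above they also contribute zero in (i), so the two quantities agree exactly.

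There is no real obstacle here; the proof is a bookkeeping exercise. The only thing to be careful about is checking that the factor $2^{2-|e|} / 2$ in the $R$-excess exactly matches the factor $2^{1-|e|}$ coming from the difference $p_e - p_e^*$ in both sub-cases of $|e \cap V'| = 2$, which is exactly why the weights in $R$ were defined with the factor $2^{2-|e|}$.
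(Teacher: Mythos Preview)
Your proof is correct and takes essentially the same approach as the paper: an edge-by-edge case analysis on $|e\cap V'|\in\{0,1,2\}$. The only cosmetic difference is that the paper shows the difference between the two \emph{sizes} is independent of $\omega$, while you compute the two \emph{excesses} directly and match them term by term; the underlying calculation is identical.
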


\begin{proof}
Let $z$ be the size of $\omega$ as a 2-cut of $R$ and let $z'$ be the average size of $\omega$ as a $V'$-partial 2-cut of $H$. It suffices to show that $z-z'$ does not depend on $\omega$.

First, consider the edges $e$ intersecting $V'$ in at most one vertex. The contribution of each such $e$ to $z$ is zero and the contribution to $z'$ is $p_{e}^{*}=1-2^{1-|e|}$, independently of $\omega$.

Next, for each $u,v$ let $N_{u,v}$ be the number of terms in the sum defining $\eta_{u,v}$. If $\omega(u)=\omega(v)$ then for any edge $e\in E(H)$ intersecting $V_{i}'$ in $u$ and $v$, the probability that $e$ is not multicoloured in a uniformly random extension of $\omega$ is $2^{2-\left|e\right|}$. So, the contribution of all such edges to $z'$ is $N_{u,v}-\eta_{u,v}$, while the contribution to $z$ is zero. On the other hand, if $\omega(u)\ne\omega(v)$, then the contribution of $\{u,v\}$ to $z'$ is $N_{u,v}$, while the contribution to $z$ is $\eta_{u,v}$.
\end{proof}

Next, we give the necessary generalisation of \cref{lem:random-subgraph-cut}. The \emph{kurtosis} of a random variable $X$ is $\E(X-\E X)^4/(\Var X)^2$.

\begin{lemma}
\label{lem:random-cut}
Let $F$ and $G$ be multigraphs on a set $V$ of
$n$ vertices. Suppose $G$ has $m$ edges and maximum degree $\Delta$ and let $g_{u,v}$ be the number of edges between two vertices $u$ and $v$ in $G$.
Consider a random weighted graph $R$ on the same vertex set $V$, where the weight between vertices $u$ and $v$ is a random variable $\eta_{u,v}$ with variance $\Omega(g_{u,v})$ and kurtosis $O(1)$. Suppose, moreover, that the weights $\eta_{u,v}$ are independent. Let $X^{\max}$ be the maximum excess of a 2-cut of $F\cup R$. Then $\E X^{\max}=\Omega\left(m/\sqrt{\Delta}\right)$.
\end{lemma}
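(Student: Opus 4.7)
My plan is to analyse a greedy algorithm on an arbitrary ordering $v_1,\dots,v_n$ of $V$ via a conditional Paley--Zygmund argument that exploits the independence of the weights $\eta_{u,v}$ across distinct vertex pairs. Writing $w_{uv}=f_{uv}+\eta_{uv}$ for the combined weight between $u$ and $v$ in $F\cup R$ (where $f_{uv}$ is the number of edges of $F$ between $u$ and $v$), I process the vertices in order and at each step place $v_i$ in the side $\chi_{v_i}\in\{-1,+1\}$ that maximises the weighted cut between $v_i$ and the already-placed vertices. A standard computation shows that the resulting 2-cut has excess exactly $\tfrac12\sum_i|T_i|$, where $T_i:=\sum_{j<i}w_{v_iv_j}\chi_{v_j}$, and so $X^{\max}\ge \tfrac12\sum_i|T_i|$. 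It will therefore suffice to prove $\sum_i\E|T_i|=\Omega(m/\sqrt{\Delta})$.

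The key structural observation is a conditional independence. The greedy choices $\chi_{v_1},\dots,\chi_{v_{i-1}}$ depend only on the variables $\eta_{v_kv_\ell}$ with $k<\ell\le i-1$, so conditional on these choices, $T_i$ is a deterministic constant $\mu_i$ plus the sum $S_i:=\sum_{j<i}(\eta_{v_iv_j}-\E\eta_{v_iv_j})\chi_{v_j}$ of independent mean-zero random variables, each of which is also independent of the conditioning. Thus $T_i$ has conditional mean $\mu_i$ and conditional variance $\sigma_i^2=\sum_{j<i}\Var(\eta_{v_iv_j})=\Omega\bigl(\sum_{j<i}g_{v_iv_j}\bigr)$ by hypothesis. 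Expanding $\E S_i^4$ by independence and mean-zero (note $\chi_{v_j}^4=1$), and using the bounded-kurtosis assumption on each $\eta_{v_iv_j}$ together with $\sum_j \Var(\eta_{v_iv_j})^2\le\sigma_i^4$, yields $\E S_i^4=O(\sigma_i^4)$. Combining with the inequality $(a+b)^4\le 8(a^4+b^4)$ then gives $\E T_i^4\le 8\mu_i^4+8\E S_i^4=O((\mu_i^2+\sigma_i^2)^2)=O((\E T_i^2)^2)$.

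Now I apply Paley--Zygmund to $T_i^2$ (conditionally on the earlier greedy choices) to obtain
\[
\E\bigl[\,|T_i|\,\bigm|\,\chi_{v_1},\dots,\chi_{v_{i-1}}\bigr]\ =\ \Omega\!\left(\sqrt{\mu_i^2+\sigma_i^2}\right)\ =\ \Omega(\sigma_i),
\]
and since this lower bound is deterministic, $\E|T_i|=\Omega\bigl(\sqrt{\sum_{j<i}g_{v_iv_j}}\bigr)$. Finally, the back-sum $\sum_{j<i}g_{v_iv_j}$ is at most the degree of $v_i$ in $G$, hence at most $\Delta$, so $\sqrt{\sum_{j<i}g_{v_iv_j}}\ge\bigl(\sum_{j<i}g_{v_iv_j}\bigr)/\sqrt{\Delta}$; summing over $i$ and using $\sum_i\sum_{j<i}g_{v_iv_j}=m$ yields
\[
\E X^{\max}\ \ge\ \tfrac12\sum_i\E|T_i|\ =\ \Omega\!\left(\frac{1}{\sqrt{\Delta}}\sum_i\sum_{j<i}g_{v_iv_j}\right)\ =\ \Omega\!\left(\frac{m}{\sqrt{\Delta}}\right),
\]
as required.

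The main point requiring care is the conditional bounded-kurtosis estimate above. The fact that $T_i$ has a non-zero conditional mean $\mu_i$ (absorbing both the contribution of $F$ and any bias in the $\eta_{u,v}$) is harmless, because applying Paley--Zygmund to $T_i^2$ rather than to $S_i$ automatically accounts for $\mu_i$ through $\E T_i^2=\mu_i^2+\sigma_i^2$ and the matching upper bound on $\E T_i^4$.
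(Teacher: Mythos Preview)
Your proof is correct and follows essentially the same strategy as the paper: run the greedy algorithm, observe that conditionally on the earlier choices the difference $T_i$ is a shifted sum of independent low-kurtosis variables, and apply Paley--Zygmund to $T_i^2$ to lower-bound $\E|T_i|$ by the conditional standard deviation. Your handling of the nonzero conditional mean $\mu_i$ (coming from $F$ and from $\E\eta_{u,v}$) via $\E T_i^2=\mu_i^2+\sigma_i^2$ is in fact slightly cleaner than the paper's appeal to its auxiliary Lemma~7.6.

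The one genuine difference is in how the ordering is chosen. The paper first averages over a \emph{random} ordering to obtain $\sum_v\sqrt{d_<(v)}\ge\tfrac12\sum_v\sqrt{d_G(v)}$ and only then uses $\sqrt{d_G(v)}\ge d_G(v)/\sqrt{\Delta}$; you instead fix an arbitrary ordering and use $\sqrt{d_<(v_i)}\ge d_<(v_i)/\sqrt{\Delta}$ directly, summing to $m/\sqrt{\Delta}$. Both routes give the same $\Omega(m/\sqrt{\Delta})$, so your version is a harmless simplification here; the paper's random-ordering step would only pay off if one wanted the sharper intermediate bound $\Omega\bigl(\sum_v\sqrt{d_G(v)}\bigr)$, which is not needed for the stated lemma.
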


Before proving \cref{lem:random-cut} we briefly explain why \cref{lem:random-subgraph-cut} follows from it. For this we will need the simple observation that sums of independent low-kurtosis random variables themselves have low kurtosis.

\begin{lemma}
\label{lem:kurtosis-sum}
Let $X_1,\dots,X_n$ be independent random variables each with kurtosis $O(1)$. Then $X=\sum_i X_i$ has kurtosis $O(1)$.
\end{lemma}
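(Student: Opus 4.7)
The plan is to reduce to the centred case and carry out a direct fourth-moment expansion. Since kurtosis is translation invariant, I may assume $\E X_i = 0$ for all $i$, so that $\E X = 0$ and $\sigma^2 := \Var X = \sum_i \sigma_i^2$ where $\sigma_i^2 = \Var X_i = \E X_i^2$. The hypothesis gives $\E X_i^4 \le C \sigma_i^4$ for some absolute constant $C$.

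Next, I expand
\[
\E X^4 = \sum_{i,j,k,\ell} \E[X_i X_j X_k X_\ell].
\]
By independence and the vanishing of $\E X_i$, a term can only be nonzero if every index that appears, appears at least twice. This rules out the ``all distinct'', ``three equal one distinct'', and ``one pair plus two singletons'' patterns. The only surviving contributions come from $i = j = k = \ell$ (giving $\sum_i \E X_i^4$) and from the three ways to pair up the four positions into two matched pairs with distinct indices (giving $3 \sum_{i \neq j} \sigma_i^2 \sigma_j^2$). Therefore
\[
\E X^4 = \sum_i \E X_i^4 + 3 \sum_{i \neq j} \sigma_i^2 \sigma_j^2 \le C \sum_i \sigma_i^4 + 3 \sum_{i \neq j} \sigma_i^2 \sigma_j^2.
\]

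Finally, I compare this to $(\Var X)^2 = \bigl(\sum_i \sigma_i^2\bigr)^2 = \sum_i \sigma_i^4 + \sum_{i \neq j} \sigma_i^2 \sigma_j^2$. The display above is at most $\max(C,3) \cdot (\Var X)^2$, so the kurtosis of $X$ is bounded by $\max(C,3) = O(1)$, as required.

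There is essentially no obstacle here: the argument is a standard moment computation, and the only thing to be careful about is correctly enumerating the index patterns in the fourth-moment expansion (in particular, noting that the ``three equal, one different'' pattern vanishes because of the zero-mean assumption, and that the combinatorial factor for the paired configurations is exactly $3$).
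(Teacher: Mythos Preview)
Your proof is correct and follows essentially the same approach as the paper: centre the variables, expand the fourth moment, identify the surviving index patterns, and compare with $(\Var X)^2$. Your write-up is in fact a bit more explicit than the paper's (which simply records the expansion and the $O(\sigma^4)$ bound in one line), and your combinatorial factor of $3$ over ordered pairs $i\neq j$ is the correct one.
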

\begin{proof}
We may suppose without loss of generality, that $X_{1},\dots,X_{n}$ have
mean zero. Let $\sigma_{i}^{2}=\Var X_{i}=\E X_i^2$ and
let $\sigma^{2}=\sum_{i}\sigma^2_{i}=\Var X$. By the independence of the $X_i$ and the kurtosis assumptions, we have
\[\E X^4=\sum_i \E X_i^4+6\sum _{i\ne j} \sigma_i^2 \sigma_j^2 =O(\sigma^4).\tag*{\qedhere}\]
\end{proof}

\begin{proof}[Proof of \cref{lem:random-subgraph-cut} given \cref{lem:random-cut}]
Recall the definition of the random multigraph $R$ arising from $G$ in the statement of \cref{lem:random-subgraph-cut}. The number of edges $\eta_{u,v}$ in $R$ between any two vertices $u,v$ can be represented in the form $\eta_{u,v}=\eta^{(1)}_{u,v}+\dots+\eta^{(q)}_{u,v}$, where the $\eta^{(i)}_{u,v}$ are independent, and each $\eta^{(i)}_{u,v}$ satisfies $\Pr(\eta^{(i)}_{u,v}=g^{(i)}_{u,v})=p$ and $(\eta^{(i)}_{u,v}=0)=1-p$ for some $g^{(1)}_{u,v},\dots,g^{(q)}_{u,v}\in \NN$ with $g^{(1)}_{u,v}+\dots+g^{(q)}_{u,v}=g_{u,v}$. Note that $\Var(\eta^{(i)}_{u,v})=\left(g^{(i)}_{u,v}\right)^{\!2}p(1-p)=\Omega(g^{(i)}_{u,v})$, so $\Var \eta_{u,v}=\Omega(g_{u,v})$. Also, one may compute that the kurtosis of each $\eta^{(i)}_{u,v}$ is $(1-3p+3p^2)/(p-p^2)=O(1)$, so by \cref{lem:kurtosis-sum}, $\eta_{u,v}$ has kurtosis $O(1)$ as well. We may therefore apply \cref{lem:random-cut}.
\end{proof}

We will also need an additional lemma showing that for sums
of independent low-kurtosis random variables we are fairly likely to see fluctuations of size comparable to the standard deviation.

\begin{lemma}
\label{lem:sum-kurtosis}
Let $X_{1},\dots,X_{n}$ be a sequence of independent
random variables with kurtosis $O(1)$ and let $X=\sum_{i}X_{i}$. Then there is a positive constant $c$ (not
depending on $n$) such that 
\[
\Pr\left(\left|X-\E X\right|\ge c\sqrt{\Var X}\right)\ge c.
\]
\end{lemma}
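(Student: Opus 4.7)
The plan is to apply the Paley--Zygmund (second moment) inequality to the nonnegative random variable $Y = (X - \E X)^2$, whose mean is $\E Y = \Var X$. To invoke Paley--Zygmund productively, I need an upper bound on $\E Y^2 = \E(X - \E X)^4$ of the form $O((\Var X)^2)$. But this is precisely a bound on the kurtosis of $X$, and here \cref{lem:kurtosis-sum} does all the work: since each $X_i$ has kurtosis $O(1)$ and the $X_i$ are independent, that lemma already guarantees that $X$ itself has kurtosis $O(1)$, i.e., $\E Y^2 = O((\E Y)^2)$.

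With this in hand, the Paley--Zygmund inequality applied to $Y$ with parameter $\theta = 1/2$ gives
\[
\Pr\!\left(Y \ge \tfrac{1}{2} \E Y\right) \;\ge\; \left(1 - \tfrac{1}{2}\right)^2 \frac{(\E Y)^2}{\E Y^2} \;=\; \Omega(1).
\]
Unpacking $Y = (X - \E X)^2$ and $\E Y = \Var X$, this says exactly that $|X - \E X| \ge \sqrt{\Var X / 2}$ with probability $\Omega(1)$. Choosing $c$ to be the minimum of $1/\sqrt{2}$ and the implicit constant in the above $\Omega(1)$ bound completes the proof.

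I do not expect any real obstacle: all of the substantive work is already done by \cref{lem:kurtosis-sum}, and the remaining step is a textbook application of Paley--Zygmund. One could instead attempt a direct argument via characteristic functions or a CLT-type bound, but such approaches would either require stronger moment hypotheses or produce quantitatively worse constants, whereas the Paley--Zygmund route gives the cleanest match to the hypotheses actually available.
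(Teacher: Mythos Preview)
Your proposal is correct and essentially identical to the paper's own proof: both invoke \cref{lem:kurtosis-sum} to reduce to $X$ having bounded kurtosis, then apply Paley--Zygmund to $(X-\E X)^2$. The only difference is the choice of Paley--Zygmund parameter ($\theta=1/2$ versus $\theta=1/4$ in the paper), which is immaterial.
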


\begin{proof}
Recalling \cref{lem:kurtosis-sum}, it suffices to show that the desired conclusion holds whenever $X$ itself has kurtosis $O(1)$. Also, without loss of generality, it suffices to consider the case where $\E X=0$. The desired conclusion now follows from \cite[Proposition~9.4]{Odo14}, but we include the short proof for completeness. By the Paley--Zygmund inequality,
\begin{align*}
\Pr\left(|X|\ge \frac12 \sqrt{\Var X}\right) = \Pr\left(X^2\ge \frac1{4} \E X^2\right) \ge \left(1-1/4\right)^2\frac{\left(\E X^2\right)^2}{\E X^4}=\Omega(1).\tag*{\qedhere}
\end{align*}
\end{proof}

Now we prove \cref{lem:random-cut}.
\begin{proof}[Proof of \cref{lem:random-cut}]
Consider a random ordering of the vertices of $G$ and let $d_{<}\left(v\right)$
be the number of edges between $v$ and the vertices that precede
it (in $G$). For each $v$, because a random ordering has the same
distribution as its reverse, note that $d_{<}\left(v\right)$ has
the same distribution as $d_{G}\left(v\right)-d_{<}\left(v\right)$,
so
$\E\sqrt{d_{<}\left(v\right)}=\E\left[(1/2)\left(\sqrt{d_{<}\left(v\right)}+\sqrt{d_{G}\left(v\right)-d_{<}\left(v\right)}\right)\right]$. By concavity, $\sqrt{d_{<}\left(v\right)}+\sqrt{d_{G}\left(v\right)-d_{<}\left(v\right)}\ge \sqrt{d_{G}\left(v\right)}$, so in fact $\E\sqrt{d_{<}\left(v\right)}\ge \sqrt{d_{G}\left(v\right)}/2$. Hence, we can fix an ordering with 
\[
\sum_{v\in V}\sqrt{d_{<}\left(v\right)}\ge\sum_{v\in V}\frac{\sqrt{d_{G}\left(v\right)}}2\ge \sum_{v\in V}\frac{d_G(v)}{2\sqrt \Delta}=\Omega\left(\frac{m}{\sqrt{\Delta}}\right).
\]
We now build a cut with parts $A$ and $B$ greedily, as follows. For each vertex $v$
in the chosen order in turn, expose the edges of $R$ between $v$ and
the preceding vertices. Consider the total weight $e_{A}\left(v\right)$
(respectively, $e_{B}\left(v\right)$) of edges between $v$ and the vertices
previously placed in part $A$ (respectively, part $B$) in $F\cup R$.
If $e_{A}\left(v\right)\ge e_{B}\left(v\right)$, put $v$ in
part $B$. Otherwise, put $v$ in part $A$. The outcome of this procedure
is a cut with excess at least $\sum_{v}\left|e_{A}\left(v\right)-e_{B}\left(v\right)\right|/2\le X^{\max}$.

For each step $v$, let $A_v$ be the set of vertices already placed in $A$ and let $B_v$ be the set of vertices already placed in $B$. Conditioning on these previous choices, note that
$$e_{A}\left(v\right)-e_{B}\left(v\right)=\sum_{u\in A_v}\eta_{u,v}-\sum_{u\in B_v}\eta_{u,v}$$ is a sum of independent random variables each with kurtosis $O\left(1\right)$ and the variance of this sum is
$$\sum_{u\in A_v}\Omega(g_{u,v})+\sum_{u\in B_v}\Omega(g_{u,v})=\Omega\left(d_{<}\left(v\right)\right).$$By \cref{lem:sum-kurtosis}, there is $c=\Omega\left(1\right)$ such that 
\[
\Pr\left(\left|e_{A}\left(v\right)-e_{B}\left(v\right)\right|\ge c\sqrt{d_{<}\left(v\right)}\right)\ge c.
\]
This means that 
\[
\E X^{\max}\ge \E\sum_{v\in V}\frac{\left|e_{A}\left(v\right)-e_{B}\left(v\right)\right|}2\ge \frac{c^{2}}2\sum_{v\in V}\sqrt{d_{<}\left(v\right)}=\Omega\left(\frac{m}{\sqrt{\Delta}}\right),
\]
as desired.
\end{proof}
We need one more ingredient to prove \cref{lem:almost-linear-2}. The following lemma was first proved by Bonami~\cite{Bon68} (for a short proof see~\cite{Odo14}, where it is given the name ``Bonami Lemma''). It can be viewed as a simple case of the Bonami--Beckner hypercontractive inequality for Boolean functions.

\begin{lemma}
\label{lem:bonami}
Let $\xi_1,\dots,\xi_n$ be independent Rademacher random variables, that is $\Pr(\xi_i=1)=\Pr(\xi_i=-1)=1/2$. Let $X$ be a degree-$b$ polynomial in the variables $\xi_1,\dots ,\xi_n$. Then $\E X^4\le 9^b (\E X^2)^2$.
\end{lemma}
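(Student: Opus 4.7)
The plan is to prove this by induction on $n$, following a standard decomposition argument. First, since $\xi_i^2=1$, we may assume without loss of generality that $X$ is \emph{multilinear}, that is, $X=\sum_{S\subseteq[n],\,|S|\le b}c_S\prod_{i\in S}\xi_i$. The base case $n=0$ is trivial, as $X$ is then a constant and equality holds.

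For the inductive step, write $X=A+\xi_n B$, where $A$ and $B$ are multilinear polynomials in $\xi_1,\dots,\xi_{n-1}$ of degrees at most $b$ and $b-1$, respectively. Since $\xi_n$ is independent of $A,B$ with $\E\xi_n=0$ and $\E\xi_n^2=1$, we have $\E X^2=\E A^2+\E B^2$. Expanding $(A+\xi_n B)^4$ and taking expectations (the odd powers of $\xi_n$ vanish), we obtain
\[
\E X^4=\E A^4+6\,\E[A^2B^2]+\E B^4.
\]
By Cauchy--Schwarz, $\E[A^2B^2]\le\sqrt{\E A^4\cdot\E B^4}$. Applying the inductive hypothesis (on $n-1$ variables) gives $\E A^4\le 9^b(\E A^2)^2$ and $\E B^4\le 9^{b-1}(\E B^2)^2$, so
\[
\E[A^2B^2]\le\sqrt{9^b\cdot 9^{b-1}}\,\E A^2\,\E B^2=\tfrac{9^b}{3}\,\E A^2\,\E B^2.
\]
Substituting back, the factor of $6$ in front of the cross term becomes $6/3=2$, and the bound collapses into a perfect square:
\[
\E X^4\le 9^b(\E A^2)^2+2\cdot 9^b\,\E A^2\,\E B^2+9^{b-1}(\E B^2)^2\le 9^b\bigl(\E A^2+\E B^2\bigr)^2=9^b(\E X^2)^2.
\]

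The main technical point — really the only nonobvious thing in the whole argument — is the balance between the constant $6$ appearing when one expands $(A+\xi_n B)^4$ and the fact that Cauchy--Schwarz combined with the inductive hypothesis at degree $b$ and degree $b-1$ yields a factor of $9^{b-1/2}=9^b/3$. The precise value $9$ in the statement is chosen to make exactly this cancellation work: any smaller constant would cause the cross term to overwhelm the perfect square. No other estimate or combinatorial input is needed.
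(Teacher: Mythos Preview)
Your proof is correct. The paper does not actually prove this lemma: it simply attributes it to Bonami and refers the reader to O'Donnell's \emph{Analysis of Boolean Functions} for a short proof. The argument you give is precisely that standard proof---the induction on $n$ via the decomposition $X=A+\xi_n B$, Cauchy--Schwarz on the cross term, and the observation that the factor $9^{b-1/2}$ turns the $6$ into a $2$ so the bound closes up as a perfect square---so there is nothing to compare.
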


Now, we can finally prove \cref{lem:almost-linear-2}, recalling from \cref{lem:no-bad-reduction} that it suffices to consider the case $U=V(H)$ (we will therefore not mention $U$ in the proof).
\begin{proof}[Proof of \cref{lem:almost-linear-2}]
Let $V=V(H)$ and, for $W\subseteq V$, let $\overline W=V\setminus W$. As in the proof of \cref{lem:almost-linear-3}, we need some way to introduce randomness into the problem. For \cref{lem:almost-linear-3} we exposed one of the three parts of a random cut, but here in the 2-cut case we have no parts to spare. Our approach is to choose some subset $W\subseteq V$ and expose the parts of the vertices outside $W$. Using the same averaging trick as in \cref{lem:no-bad-reduction}, this information will allow us to define a random hypergraph $\Hpart{}$ on the vertex set $W$, such that a large cut of $\Hpart{}$ yields a large cut of $H$.

Specifically, first define the ``information restriction'' operator $P$ as follows. For $\omega:V\to\{1,2\}$, let $P\omega:\overline W\to \{1,2\}$ be the restriction of $\omega$ to $\overline W$. Then, for $\omega:V\to\{1,2\}$, let $H_{P\omega}'\subseteq H$ consist of
those edges $e$ in $H$ but not in $H[W]$ with $\left|P\omega\left(e\setminus W\right)\right|<2$ and $\left|e\cap W\right|>0$, and let $\Hpart{P\omega}$ be the mixed $k$-multigraph with vertex
set $W$, two copies of every edge in $H[W]$, and an edge $e\cap W$ for each $e\in E(H_{P\omega}')$. This definition is as in \cref{lem:no-bad-reduction}, but with $W$ in place of $U$.

If $\omega$ is a uniformly random 2-cut, then $\Hpart{P\omega}$ is a random hypergraph. In particular, for any edge $e$ of $H$ that intersects $W$ and $\overline W$, that edge will contribute to $\Hpart{P\omega}$ with probability $2^{1-|e\setminus W|}$. If $|e\setminus W|=1$, then this event is not actually random and if $|e\cap W|=1$, then the corresponding edge in $\Hpart{P\omega}$ will have just one vertex. So, the number of edges of $H$ which meaningfully contribute a random edge to $\Hpart{P\omega}$ is of the same order of magnitude as the number of edges of $H$ which intersect both $W$ and $\overline W$ in at least two vertices. For a typical choice of $W$, this is of the same order of magnitude as the number of edges of $H$ which have size at least $4$, which explains why we need this quantity to be large.

We will later go into more detail about our choice of $W$ and the properties of $\Hpart{P\omega}$, but the basic idea is that it suffices to show that the random hypergraph $\Hpart{P\omega}$ has a large-excess 2-cut. The first step towards this goal will be to use \cref{lem:exists-good} to obtain a partition $V_1',\dots,V_t'$ of $W$ satisfying the condition in the statement of \cref{lem:combine}. Then, let $X_{i}$ be the maximum average excess of a $V_{i}'$-partial
2-cut of $\Hpart{P\omega}$, and let $X=\sum_{i}X_i$. By \cref{lem:combine}, $\Hpart{P\omega}$ has a 2-cut of excess $X$. As in the proof of \cref{lem:almost-linear-3}, we may therefore turn our attention towards estimating the $\E X_i$.

Towards this end, we will use \cref{lem:weighted-reduction} to interpret $X_i$ as the maximum excess of a 2-cut in a certain weighted graph $R_{i}$ on the vertex set $V_i'$. Since $\Hpart{P\omega}$ is random, this weighted graph $R_i$ will be random as well, and in fact, due to certain properties of the $V_i'$, the edges of $R_i$ will be independent. We will finally apply \cref{lem:random-cut} to $R_i$ to estimate $\E X_i$. In particular, we will verify the necessary kurtosis condition with \cref{lem:bonami}.

Now we turn to the details of the proof, which are presented in a slightly different order to the outline above. First, we apply \cref{lem:exists-good} to obtain a suitable good partition. Let $H^{(\ge4)}\subseteq H$ be the subgraph of edges with size at least four (with vertex set $V$). Apply \cref{lem:exists-good} (with some small $c=\Omega(1)$) to obtain a $cx$-almost $\left(2m',\Delta'\right)$-good
partition $\cV=\left(V_{1},\dots,V_{t}\right)$ with respect to $\left(H,H^{(\ge4)}\right)$,
where $m'=\Omega(pm)$, $\Delta'=O(p\Delta)$ and $x=m'/\sqrt{\Delta'}$. As in the proof of \cref{lem:almost-linear-3}, delete $cx$ edges from $H$ (causing corresponding changes in $H^{(\ge 4)}$) so that
$\cV$ is an $\left(m',\Delta'\right)$-good partition with respect to $\left(H,H^{(\ge4)}\right)$. We will show that the resulting multihypergraph $H$ has a 2-cut with excess $\Omega(x)$, where the implied constant does not depend on $c$. This will suffice to prove \cref{lem:almost-linear-2}.

Next we choose $W$. Start by letting $W$ be a random set of vertices obtained
by including each vertex independently with probability $1/2$. For each $i$, let $H_{i}$ be the subgraph of $H^{(\ge4)}$ consisting of all edges that intersect $V_{i}$ in two vertices. By properties (i) and (iii) of \cref{def:good}, the $H_i$ are disjoint and $\sum_i e(H_i)\ge m'$. Now, let $G_i$ be the random multigraph with vertex set $V_i\cap W$ and an edge $\{u,v\}$ for every edge of $H_i$ containing $u,v$ and at least two vertices outside $W$. Each edge $e$ of $H_i$ contributes to $G_i$ with probability $\Omega(1)$, so $\E e(G_i)=\Omega(e(H_i))$ and we can fix an outcome of $W$ with
\begin{equation}
\sum_ie(G_i)=\sum_{i}\Omega\left(e(H_i)\right)=\Omega\left(m'\right)\label{eq:eGi}.
\end{equation}
This outcome of $W$ will be fixed for the rest of the proof, so the $G_i$ are now fixed graphs. Let $V_i'=V_i\cap W$.

Now, let $\omega:V\to\left\{ 1,2\right\} $ be a uniformly random
2-cut of $H$, and let $Z$ be the size of this 2-cut. Recall the definition of $P$ and $\Hpart{P\omega}$ from the outline at the start of the proof. As in the proof of \cref{lem:no-bad-reduction}, if, for some outcome $\rho$ of $P\omega$, $\Hpart{\rho}$ has a 2-cut $\phi$ with some excess $x'$, then we may consider the average size of two different cuts related to $\phi$ to show that $H$ has a 2-cut with excess
at least
\begin{equation}x'/2+\E\left[Z\cond P\omega=\rho\right]-\E Z.\label{eq:excess-transfer-2}\end{equation}
(The proof of \cref{eq:excess-transfer-2} is exactly the same as the proof of \cref{eq:excess-transfer-clean}, replacing ``$U$'' with ``$W$''.)

Recall that we defined $X_{i}$ to be the maximum average excess of a $V_{i}'$-partial
2-cut of $\Hpart{P\omega}$ and set $X=\sum_{i}X_i$. As in the proof of \cref{lem:almost-linear-3}, it suffices to show that $\E X=\Omega\left(x\right)$. Indeed, by \cref{lem:combine} and property (iii) of \cref{def:good}, the multihypergraph $\Hpart{P\omega}$ has a cut with excess $X$, and by \cref{eq:excess-transfer-2}, it follows that $H$ has a cut with excess at least $X/2+E\left[Z\cond P\omega\right]-\E Z$. If we could prove that $\E X=\Omega\left(x\right)$ we could conclude that $H$ has a cut with excess at least
$$\E \left[X/2+\E\left[Z\cond P\omega\right]-\E Z\right]=\Omega(x),$$ as desired.

We now wish to estimate each $\E X_i$. Let $R_i$ be the random edge-weighted graph such that the weight between any pair of distinct vertices $u,v\in V_{i}'$ is the random variable
$$\eta_{u,v}=\sum_f 2^{2-\left|f\right|},$$ 
where the sum is over all edges $f\in E(\Hpart{P\omega})$ intersecting $V_{i}'$ in $u$
and $v$. By \cref{lem:weighted-reduction}, $X_i$ is the maximum excess of a 2-cut of $R_i$.

Now, recalling the definition of $\Hpart{P\omega}$, the direct interpretation of $\eta_{u,v}$ (in terms of the random outcome of $P\omega$) is as follows. An edge $e\in H$ with $e\not\subseteq W$ and $u,v\in e$ contributes a term $2^{2-\left|e\cap W\right|}$ to $\eta_{u,v}$ if $|P\omega(e\setminus W)|<2$ (this event is only random if $|e\setminus W|\ge2$, meaning that $e$ is in $H_i\subseteq H^{(\le 4)}$). Edges $e\in H[W]$ with $u,v\in e$ contribute two non-random terms both equal to $2^{2-|e|}$ (because we included two copies of each such edge in $\Hpart{P\omega}$).

An immediate consequence of the above discussion is that the $\eta_{u,v}$ are independent. Indeed, by (iv) of \cref{def:good}, if $\{u_1,v_1\}\ne \{u_2,v_2\}$ are different pairs of vertices in $V_i'$, $e_1$ is an edge of $H$ containing $u_1$ and $v_1$ and $e_2$ is an edge of $H$ containing $u_2$ and $v_2$, then $e_1\setminus V_i'\supseteq e_1\setminus W$ and $e_2\setminus V_i'\supseteq e_2\setminus W$ do not intersect. However, the edges of $H$ containing $u$ and $v$ may intersect each other in complicated ways, so the $\eta_{u,v}$ may have rather complicated distributions. We next need to estimate the variance and kurtosis of these distributions. In particular, to apply \cref{lem:random-cut} we need to show that each $\eta_{u,v}$ has kurtosis $O(1)$ and we estimate the variances of the $\eta_{u,v}$ in terms of the edge multiplicities of $G_i$. Recall that $G_i$ has an edge $\{u,v\}$ for every edge of $H_i$ containing $u,v$ and at least two vertices outside $W$. For each pair of distinct vertices $u,v\in V_{i}$, let $g_{u,v}$ be the number of edges between $u$ and $v$ in $G_{i}$.

\begin{claim*}
For each pair of distinct vertices $u,v\in V_{i}$, $\eta_{u,v}$
has kurtosis $O(1)$ and $\Var\eta_{u,v}=\Omega\left(g_{u,v}\right)$.
\end{claim*}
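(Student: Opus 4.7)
The plan is to view $\eta_{u,v}$ as a low-degree polynomial in independent Rademacher variables and then to read off both bounds from standard Boolean Fourier tools: the kurtosis bound from Bonami's lemma (\cref{lem:bonami}) and the variance bound from a direct computation of the level-2 Fourier coefficients.

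The first step is to introduce independent Rademacher variables $\xi_w = 2\one_{\omega(w)=1}-1$ for $w\in V\setminus W$ and to observe that, for each edge $e$ contributing a random term to $\eta_{u,v}$, the indicator $\one_{|P\omega(e\setminus W)|<2}$ is just the event that all $\xi_w$ for $w\in T_e := e\setminus W$ agree in sign, which factors as $\prod_{i\ge 2}(1+\xi_{w_1}\xi_{w_i})/2$. Expanding this product and reducing with $\xi_w^2=1$ shows that its Fourier expansion is supported on certain even subsets of $T_e$, all with the same coefficient $2^{1-|T_e|}$. Because $u,v\in W$ forces $|e\cap W|\ge 2$ and hence $|T_e|\le k-2$, the centred random variable $\eta_{u,v}-\E\eta_{u,v}$ is a polynomial of degree at most $k-2$ in the $\xi_w$. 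Applying \cref{lem:bonami} then gives $\E(\eta_{u,v}-\E\eta_{u,v})^4\le 9^{k-2}(\Var\eta_{u,v})^2$, so the kurtosis is $O(1)$.

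For the variance, I plan to compute the Fourier coefficients of $\eta_{u,v}$ at 2-element sets $S\subseteq V\setminus W$. A short calculation shows that the contribution of each edge $e$ with $S\subseteq T_e$ simplifies to $c_e\cdot 2^{1-|T_e|}=2^{3-|e|}$, so that
\[
\widehat{\eta}_{u,v}(S) \;=\; \sum_{\substack{e\ni u,v,\ e\cap V_i'=\{u,v\}\\ S\subseteq T_e}} 2^{3-|e|}
\]
is a non-negative sum. Using Parseval, restricting to $|S|=2$, and expanding the square produces a double sum
\[
\Var\eta_{u,v} \;\ge\; \sum_{e,e'} 2^{6-|e|-|e'|}\binom{|T_e\cap T_{e'}|}{2},
\]
whose off-diagonal terms are all non-negative and so can be thrown away. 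Each of the $g_{u,v}$ edges counted in $G_i$ satisfies $|T_e|\ge 2$ and $|e|\le k$, contributing at least $2^{6-2k}=\Omega(1)$ to the diagonal, and giving $\Var\eta_{u,v}=\Omega(g_{u,v})$.

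The only real obstacle is writing down the Fourier representation and verifying that the level-2 coefficients of $\eta_{u,v}$ are non-negative; once that is observed, discarding the off-diagonal contributions lets us avoid any analysis of how the sets $T_e$ overlap, and in particular the finer structural properties (iii) and (iv) of the good partition play no role in this lemma.
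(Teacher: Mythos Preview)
Your proposal is correct and follows essentially the same route as the paper. The kurtosis bound is identical: both you and the paper write each indicator $\one_{|P\omega(e\setminus W)|<2}$ as a multilinear polynomial in the Rademacher variables $\xi_w$ and then invoke \cref{lem:bonami}. For the variance, the paper computes $\Cov(Q_{e_1},Q_{e_2})$ directly, using $\Pr(|P\omega(h)|<2)=2^{1-|h|}$ to show this covariance vanishes unless $|T_{e_1}\cap T_{e_2}|\ge 2$ and is $\Theta(1)$ (in particular non-negative) otherwise; the diagonal then gives $\Var\eta_{u,v}\ge\Omega(g_{u,v})$. Your Parseval argument, restricting to level~2 and using that each $\widehat{\eta}_{u,v}(S)$ is a sum of non-negative terms $2^{3-|e|}$, is a clean repackaging of the same non-negativity: it yields exactly the double sum $\sum_{e,e'}2^{6-|e|-|e'|}\binom{|T_e\cap T_{e'}|}{2}$ that the paper's covariance formula produces, and discards the off-diagonal in the same way. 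Your remark that properties (iii) and (iv) of the good partition are not used here is also correct---those properties are needed elsewhere in the surrounding proof (for the applicability of \cref{lem:weighted-reduction} and for independence of the $\eta_{u,v}$ across different pairs), but not for this claim.
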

\begin{proof}
For $u,v\in V_i'$, let $H_{u,v}$ be the set of all $g_{u,v}$ edges
$e\in E\left(H^{(\ge4)}\right)$ containing $u$, $v$ and at least two vertices
outside $W$. For each $e\in E(H_{u,v})$, define
\[
Q_{e}=2^{2-\left|e\cap W\right|}\one_{\left|P\omega\left(e\setminus W\right)\right|<2}.
\]
Observe that $\eta_{u,v}$ is the sum of all such $Q_{e}$ (strictly speaking $\eta_{u,v}$ is a translation of this sum: the edges which have fewer than two vertices outside $W$ each contribute additional non-random terms).

If $e_1\setminus W$ and $e_2\setminus W$ are disjoint, then $Q_{e_1}$ and $Q_{e_2}$ are independent. Otherwise, note that for any $h\subseteq \overline W$ we have $\Pr(|P\omega(h)|<2)=2^{1-|h|}$, so $$\Cov\left(Q_{e_{1}}, Q_{e_{2}}\right)=2^{2-\left|e_1\cap W\right|}2^{2-\left|e_2\cap W\right|}(2^{1-|e_1\cup e_2\setminus W|}-2^{1-|e_1\setminus W|}2^{1-|e_2\setminus W|}).$$ That is, $\Cov\left(Q_{e_{1}}, Q_{e_{2}}\right)=0$ unless $e_{1}\setminus W$ and $e_{2}\setminus W$ share
at least two vertices, in which case $\Cov\left(Q_{e_{1}}, Q_{e_{2}}\right)=\Theta(1)$. This implies that $\Var\left(\eta_{u,v}\right)=\Theta\left(N_2\right)$,
where $N_2\ge g_{u,v}$ is the number of ordered pairs of (not necessarily
distinct) edges of $H_{u,v}$ intersecting in at least two vertices outside $W$.

Next, for each $w\in \overline W$, let $\xi_w=(-1)^{P\omega(w)}$, so that the $\xi_w$ are independent Rademacher random variables as in \cref{lem:bonami}. For each $h\subseteq \overline W$, let $y$ be an arbitrary vertex in $h$ and observe that
$$\one_{\left|P\omega\left(h\right)\right|<2}=\frac1{2^{|h|-1}}\prod_{w\in h\setminus\{y\}}(1+\xi_y\xi_w).$$
This means that each $Q_e$ is a polynomial in the $\xi_w$ with degree $O(1)$ (actually, using the fact that $\xi^2=1$ for $\xi\in \{-1,1\}$, the degree can be bounded by $|e\setminus W|\le k-2$). Therefore, $\eta_{u,v}-\E \eta_{u,v}$ is a polynomial in the $\xi_w$ with degree $O(1)$ as well and, by \cref{lem:bonami}, it follows that $\eta_{u,v}$ has kurtosis $O(1)$, as desired.
\end{proof}
By the above claim and property (ii) of \cref{def:good}, we may apply \cref{lem:random-cut} with $F=\emptyset$, $G=G_i$, $R=R_i$, $\Delta=\Delta'$ to see that
$$\E X_{i}=\Omega\left(\frac{e(G_i)}{\sqrt{\Delta'}}\right).$$
Recalling \cref{eq:eGi}, it follows that
\begin{align*}
\E\sum_{i}X_{i}&=\Omega\left(\frac{m'}{\sqrt{\Delta'}}\right)=\Omega(x),
\end{align*}
completing the proof.
\end{proof}

\section{Concluding remarks\label{sec:concluding}}

For fixed $2 \le r\le k$ with $k>3$ or $r>2$, we have shown that every $k$-graph has max-$r$-cut at least
\[
\frac{S\left(k,r\right)r!}{r^{k}}m+\Omega\left(m^{5/9}\right),
\]
while there exist $k$-graphs with max-$r$-cut only
\[
\frac{S\left(k,r\right)r!}{r^{k}}m+O\left(m^{2/3}\right).
\]
The most interesting problem left open by this paper is to close the gap between these bounds. We make the following
conjecture.
\begin{conjecture}
For fixed $2 \le k\le r$ with $k>3$ or $r>2$, every $k$-graph has an
$r$-cut of size
\[
\frac{S\left(k,r\right)r!}{r^{k}}m+\Omega\left(m^{2/3}\right).
\]
\end{conjecture}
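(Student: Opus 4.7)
The target exponent $2/3$ matches the upper bound of \cref{thm:upper} attained by the random $k$-graph $\GG_k(n, n^{3-k})$, which has $m = \Theta(n^3)$, maximum degree $\Delta = \Theta(m^{2/3})$ and codegree $\Theta(m^{1/3})$. The plan is to preserve the architecture of the proof of \cref{thm:nontrivial} — reduction to the almost-linear case via \cref{lem:2-degree-structure}, the $\Hpart{\rho}$ trick reducing to the cases $r=k=3$ and $r=2$, and the good-partition/combining strategy — but to sharpen \cref{lem:almost-linear-3,lem:almost-linear-2} so that they yield excess $\Omega(m/\sqrt{\Delta})$ rather than the weaker $\Omega(\sqrt{p}\,m/\sqrt{\Delta})$. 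With this improvement in hand, one would choose $\Delta = m^{2/3}$, $g = m^{1/3}$ and $q = \sqrt{m/g} = m^{1/3}$ in the reductions, so that \cref{lem:2-degree-structure} itself yields excess $\Omega(gq) = \Omega(m^{2/3})$ in the non-almost-linear case, while the sharpened almost-linear lemmas give excess $\Omega(m/\sqrt{\Delta}) = \Omega(m^{2/3})$ in the complementary case.

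To remove the $\sqrt{p}$ loss in the almost-linear case, I would try to run a single global greedy procedure on the random graph $\Gpart{P\omega}$ (in the 3-cut case) or on the auxiliary random weighted graph produced by \cref{lem:weighted-reduction} (in the 2-cut case), rather than first thinning $\Gpart{P\omega}$ to $\Rgood{i}$ on a small vertex cluster $V_i'$. Concretely, I would order the vertices of $W$, expose the contributions of each vertex as in the proof of \cref{lem:random-cut}, and place each vertex into the part that maximizes its net cut contribution. The resulting excess is at least $\Omega\bigl(\sum_v |e_A(v) - e_B(v)|\bigr)$, so the aim is to show that $|e_A(v)-e_B(v)|$ is of order $\sqrt{d_<(v)}$ with positive probability for a typical vertex $v$; summing the lower bounds in expectation would then give total excess $\Omega\bigl(\sum_v \sqrt{d_<(v)}\bigr) = \Omega(m/\sqrt{\Delta})$ exactly as in \cref{lem:random-cut}.

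The main obstacle is that, unlike in \cref{lem:random-cut}, the indicators making up $e_A(v) - e_B(v)$ are not independent: they share randomness through common vertices of the underlying hypergraph $H$ lying outside $W$, and the correlation structure depends on the greedy history. In the almost-linear regime each vertex outside $W$ should be responsible for at most $g \ll \Delta$ pairs of correlated indicators at step $v$, which suggests that $\Var(e_A(v)-e_B(v))$ remains of the ideal order $d_<(v)$. What is less clear is how to establish a matching fourth-moment bound $\E(e_A(v)-e_B(v))^4 = O\bigl(\Var(e_A(v)-e_B(v))^2\bigr)$ so that a Paley--Zygmund argument in the spirit of \cref{lem:sum-kurtosis} goes through in the presence of these correlations. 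A promising route is to expand $e_A(v)-e_B(v)$ as a bounded-degree polynomial in the Rademacher variables $\xi_w = (-1)^{P\omega(w)}$ and appeal to the hypercontractive bound of \cref{lem:bonami} directly; I expect that establishing the required bounded-degree representation conditional on the greedy history, and maintaining enough independence along the ordering to iterate the argument, will be the technical heart of any proof and the likely source of the difficulty that has so far kept the conjecture open.
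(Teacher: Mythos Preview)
The statement you are addressing is a \emph{conjecture}, explicitly flagged in the concluding remarks as ``the most interesting problem left open by this paper''. The paper provides no proof of it, so there is nothing to compare your attempt against. What you have written is not a proof either: it is a research plan that correctly identifies where the loss in the current argument comes from (the factor $\sqrt{p}$ arising because the good partition in \cref{lem:exists-good} only captures an $O(p)$-fraction of the edges) and proposes to avoid that loss by running a single global greedy process on $\Gpart{P\omega}$. This is precisely the direction the paper itself points to in the paragraph following the conjecture, where the authors note that the use of \cref{lem:exists-good} ``seems quite wasteful'' and that ``a more sophisticated argument could tolerate dependencies somehow''.

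Your plan has a genuine and, as you yourself concede, unresolved gap. In the proof of \cref{lem:random-cut} the quantity $e_A(v)-e_B(v)$ is, conditional on the sets $A_v,B_v$, a sum of \emph{fresh} independent random variables, so Paley--Zygmund applies cleanly. In the global setting you propose, the randomness determining whether an edge of $\Gpart{P\omega}$ is present is the value $P\omega(v_e)$ for the third vertex $v_e$ of the corresponding hyperedge, and these $v_e$ are shared across many edges and across many steps of the greedy process. In particular, the greedy placements of earlier vertices already depend on the same Rademacher variables $\xi_w$ that govern the edges incident to $v$, so $e_A(v)-e_B(v)$ is not a bounded-degree polynomial in the $\xi_w$ with coefficients independent of those variables; the coefficients $\pm1$ coming from the $A/B$ assignment are themselves complicated Boolean functions of the $\xi_w$. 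The hypercontractive bound of \cref{lem:bonami} therefore does not apply directly, and it is not clear that conditioning on the history leaves enough residual variance in $e_A(v)-e_B(v)$ to make the argument go through. This is exactly the obstacle that has kept the conjecture open; your proposal does not overcome it.
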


We remark that our specific bound $\Omega(m^{5/9})$ is simply what arises from the tradeoff between the lemmas in \cref{subsec:almost-linear-reduction} and \cref{lem:almost-linear-3,lem:almost-linear-2}, and it's possible that further improvements could come from both directions. In particular, for almost-linear hypergraphs, our use of \cref{lem:exists-good} seems quite wasteful, as we are dividing our hypergraph into many small parts and only dealing with the small fraction of edges which intersect a part in two vertices. The reason we do this is to reduce our problem to a collection of random subproblems each with few dependencies, but one might hope that a more sophisticated argument could tolerate dependencies somehow.

We have also shown that  every $m$-edge, $n$-vertex
$k$-graph with no isolated vertices has an $r$-cut of size
\[
\frac{S\left(k,r\right)r!}{r^{k}}m+c_{r,k}n
\]
for some positive constant $c_{r,k}$. When $r = 2$, we were able to determine the best possible value for the constants $c_{2,k}$. It remains an open problem to do the same for $r > 2$.

Finally, we remark that in graphs there is a more general notion
of a cut than we discussed in the introduction. An \emph{$\ell$-cut}
is a partition of the vertex set into $\ell$ parts and the size
of such an $\ell$-cut is the number of edges which have both their
vertices in different parts. Extending this more general notion of
a cut to hypergraphs would result in a very general two-parameter
family of cut problems. Indeed, for $r \le \ell, k$, we can define
the \emph{$r$-size} of an $\ell$-cut to be the number of edges which have
vertices in at least $r$ of the $\ell$ parts and the \emph{max-$\left(r,\ell\right)$-cut} of a $k$-graph $H$ to
be the maximum $r$-size among all $\ell$-cuts of $H$. We imagine
that our methods could be fairly straightforwardly adapted to prove generalised
counterparts of \cref{thm:nontrivial,thm:excess-n}, but we have not 
explored this further.

\end{document}